\definecolor{textcolor}{HTML}{0A75A8}
\newcommand\Text{}
\newcommand{\mathd}{\mathrm{d}}
\newcommand{\ssup}[1] {{\scriptscriptstyle{({#1}})}}
\newcommand{\R}{\mathbb R}
\newcommand{\E}{\mathbb E}
\renewcommand{\P}{\mathbb P}
\newcommand{\x}{\mathbf{x}}
\newcommand{\y}{\mathbf{y}}
\newcommand{\z}{\mathbf{z}}
\newcommand{\0}{\mathbf{0}}
\renewcommand{\iota}{\varkappa}
\renewcommand{\phi}{\varphi}
\renewcommand{\t}{{\tau}}
\newcommand{\Z}{\mathbb Z}
\renewcommand{\P}{\mathbb P}
\newcommand{\X}{\mathcal X}
\newcommand{\abs}[1]{\left| #1 \right|}
\newcommand{\gaus}[1]{\lfloor #1 \rfloor}
\newcommand{\suag}[1]{\lceil #1 \rceil}
\newcommand{\conn}{\leftrightarrow}
\DeclarePairedDelimiter\set{\lbrace}{\rbrace}
\DeclareMathOperator{\Cat}{Cat}
\DeclareMathOperator{\diam}{diam}
\title{Chemical distance in geometric random graphs with long edges and scale-free degree distribution\\[-2mm]} 
\author{Peter Gracar, Arne Grauer, and Peter M\"orters\footnote{Corresponding author, E-Mail: moerters@math.uni-koeln.de}\\
{\sl\small Mathematisches Institut, Universit\"at zu K\"oln, Germany}}
\date{}
\theoremstyle{plain} 
\newtheorem{prop}{Proposition}[section]
\newtheorem{lemma}{Lemma}[section]
\newtheorem{theorem}{Theorem}[section]
\newtheorem{assum}{Assumption}[section]
\theoremstyle{definition} 
\theoremstyle{remark} 
\newtheorem{remark}{Remark}[section]
\begin{document}
\maketitle

\vspace{-8mm}
\abstract{\noindent We study geometric random graphs defined on the points of a Poisson process in \mbox{$d$-dimensional space}, which additionally carry independent random marks. Edges are established at random using the marks of the endpoints and the distance between points in a flexible way. Our framework includes the soft Boolean model (where marks play the role of radii of balls centred in the vertices),  
a version of spatial preferential attachment (where marks play the role of birth times), and a whole range of other graph models 
with scale-free degree distributions  and edges spanning large distances. 
In this versatile framework we  give sharp criteria for absence of ultrasmallness of the graphs and in the ultrasmall regime establish a limit theorem for the chemical distance of two points.  Other than in the mean-field scale-free network models the boundary of the ultrasmall regime depends not only on the power-law exponent of the degree distribution  but also on the spatial embedding of the graph, quantified by the rate of decay of the probability of an edge connecting typical points in terms of their spatial distance.}

{\footnotesize \tableofcontents}
\newpage
\section{Introduction}
\subsection{Background}

An important topic in percolation theory and, more generally, the theory of geometrically embedded random graphs, is the comparison of Euclidean distances of two points with their graph distance, often called chemical distance. Starting with the work of Grimmett and Marstrand~\cite{GriM1990}, this problem has been studied for Bernoulli percolation, for example by Antal and Pisztora~\cite{AntP1996} and Garet and Marchand \cite{GarM2004, GarM2007}, but also for models with long range interactions, such as random interlacements, see \v{C}ern\'y and Popov~\cite{CerP2012}, its vacant set and the Gaussian free field, see~Drewitz et al.~\cite{DreRB2014}. In the supercritical phase of these models Euclidean and chemical  distance of points on the unbounded connected component are typically of comparable order when the points are distant, see~\cite{DreRB2014} for general conditions for percolation models on $\mathbb{Z}^d$ to share this behaviour. The introduction of additional long edges can change this behaviour and the graph distance can be a power of the logarithm  or even an iterated logarithm of the 
Euclidean distance. In the latter case the graph is called \emph{ultrasmall}. The focus of this paper is to characterise ultrasmallness in geometric random graphs and provide 
a universal limit theorem for typical distances in such graphs. \bigskip

We briefly review what is known on this problem. A classical scenario is \emph{long-range percolation}. 
Here points $x,y$ of a Poisson process in $\R^d$ or of the lattice $\Z^d$ are connected independently with probability
$$p(x,y)=|x-y|^{-\delta d + o(1)},$$
for some $\delta>1$. Biskup~\cite{Bis2004, BisL2019} has shown that if $1<\delta<2$ then the chemical distance~is 
$$d(x,y)= (\log |x-y|)^{\Delta+o(1)},$$ with high probability as 
$x, y$ are fixed points on the infinite component with $|x-y|\to\infty$, where $\Delta=\frac{\log 2}{\log(2/\delta)}$.  
If $\delta>2$ it was shown by Berger~\cite{BenB2001} that the chemical distance is at least linear  in the Euclidean distance 
and for $\delta=2$ there is recent progress by Ding and Sly~\cite{DinS2015}, but in both cases 
the precise asymptotics is still an open problem.  In general, ultrasmallness cannot occur in long-range percolation models. \bigskip

Ultrasmallness is however a well established phenomenon in \emph{scale-free networks}. These networks are typically not modelled as spatial graphs, so to compare the results to 
our scenario we restrict the graph to
the vertices inside a ball of radius $R$, which now contains $N$ lattice or Poisson points, with $N$  of order $R^d$. The mean-field nature of these models is reflected in the fact that connection probabilities do not depend on the spatial position of these points. Instead, points carry independent uniform marks and connections between points are established independently given the marks, with a probability 
$1 \wedge \frac1N g(s,t)$ depending on the marks $s,t$ of the vertices at the ends of a potential edge. Dependencies of interest are, for example, \medskip
\begin{itemize}
\item[(i)]  $g(s,t)=s^{-\gamma }t^{-\gamma},$\smallskip
 \item[(ii)] $g(s,t)=(s\vee t)^{-\gamma }(s \wedge t)^{\gamma-1},$\smallskip
 \item[(iii)] $g(s,t)= (s^{-\gamma/d}+ t^{-\gamma/d})^{d}.$
 \end{itemize}\bigskip
For all these examples, the graphs have scale-free degree distributions with power-law exponent $\tau=1+\frac1\gamma$. When $\gamma<\frac12$ (or, equivalently, $\tau>3$) 
the chemical distance of two randomly chosen points $x,y$ in the largest component is of order $\log N$ or, equivalently, $\log |x-y|$, see Bollobas et al.~\cite{BolJR2007}.
If however $\gamma>\frac12$ (or, equivalently, $2<\tau<3$), then the graph is ultrasmall and there is a universal limit theorem for the chemical distance of two randomly chosen potints $x,y$, namely 
\begin{equation}\label{limitclassical}
\frac{d(x,y)}{\log\log(|x-y|)}\longrightarrow  \frac{c}{\log \tfrac\gamma{1-\gamma}}, \mbox{ with high probability as $R\to\infty$,}
\end{equation}
where $c=2$ for (i) and $c=4$ for (ii), (iii), see Dommers et al.~\cite{DomHH2010}, van der Hofstad et al.~\cite{vdHHZ2007} and Norros and Reittu~\cite{NorR2006} for the existence of an ultrasmall phase and Dereich et al.~\cite{DerMM2012} for general lower bounds that match the upper bounds in the ultrasmall phase in all those examples.
\bigskip
\pagebreak[3]

Looking at spatially embedded graphs with a scale-free degree distribution, Deijfen et al.~\cite{DeivHH2013}, Deprez et al.~\cite{DepW2019} and Bringmann et al.~\cite{BriKL2018} investigated a range of spatial models where points are endowed with weights, which are heavy-tailed random variables corresponding loosely to negative powers $t^{-\gamma}$ of uniformly chosen marks~$t$. The connection probability of two marked points depends on the \emph{product} of the weights and the spatial distance of the points, which is the case in models like scale-free percolation and hyperbolic random graphs. 
Behaviour analogous to kernel (i) in the non-spatial case is identified in~\cite{BriKL2018} for these models, namely that the transition between ultrasmall and small world behaviour occurs at $\gamma=\frac12$ (equivalently, $\tau=3$) and in the former case a limit theorem as in~\eqref{limitclassical} with $c=2$~holds.
\bigskip 

We shall see in the present paper that not only the proof techniques but also the results of \cite{DeivHH2013}, \cite{DepW2019} and~\cite{BriKL2018} depend crucially on the fact that connections are considered that depend on the weights of points by taking the product.  In fact, the situation changes radically when other, equally natural, ways of connecting vertices are considered, and we shall see that the novel behaviour that we unlock in this paper is also of a universal nature. We now discuss two natural examples, which constitute our main motivation. In both cases the vertices of the graph are the points of a standard Poisson process in~$\mathbb R^d$ and every point is endowed with an independent mark, which is uniformly distributed on the unit interval~$(0,1)$.
\bigskip

In the \emph{Boolean (graph) model} on $\R^d$  the points carry random radii, which can be derived from the uniform marks~$t$, for example  as $t^{-\gamma/d}$. In the hard version of the model two points are connected by an edge if the balls around them with the associated random radii intersect. In the more powerful soft version of the Boolean model independent, identically distributed positive random variables $X=X(x,y)$ are associated with every unordered pair of vertices $\{x,y\}$ and a connection is made iff
$$\frac{|x-y|}{s^{-\gamma/d}+t^{-\gamma/d}} \leq X,$$
where $s,t$ are the marks of the vertices. The choice $X=1$ corresponds to the hard Boolean model, while the choice of $\gamma=0$ and a heavy-tailed random variable $X$ with decay
$$\mathbb P(X>r) \asymp r^{-\delta d} \mbox{ as } r\to \infty,$$
for some $\delta>1$, replicates the long-range percolation model. While neither of these boundary cases is ultrasmall, we show that a choice of $\gamma\in(0,1)$ and $\delta>1$ gives \smallskip
\begin{itemize}
\item \emph{ultrasmallness} if \smash{$\gamma>\frac{\delta}{\delta+1}$ } but, 
\smallskip
\item \emph{no ultrasmallness}  if  \smash{$\gamma<\frac{\delta}{\delta+1}$}. \medskip
\end{itemize}
Note that this boundary depends not only on the power-law exponent
of the degree distribution, which is $\tau=1+\frac1\gamma$, but also on $\delta$, which is a geometric quantity related to the decay in the presence of long edges between typical vertices.
In particular ultrasmallness does \emph{not} occur when the variance of the degree distribution becomes infinite, but at a threshold that depends on spatial correlations influencing the graph topology beyond the degree distribution, a feature that is not present in the scale-free percolation or hyperbolic random graph models.
In the ultrasmall  case we also get a different form of the limit theorem for the chemical distance, namely 
\begin{equation}\label{limitnew}
\frac{d(x,y)}{\log\log(|x-y|)}\longrightarrow  \frac{4}{\log \tfrac\gamma{\delta(1-\gamma)}}\, \mbox{ with high probability as $|x-y|\to\infty$,}
\end{equation}
where the dependence of the limiting constant on $\delta$ is another novel feature.
\medskip

\pagebreak[3]
In our second example we look at the \emph{age-based random connection model}, which was introduced in Gracar et al.~\cite{GraGLM2019}. 
Here the mark of a vertex is considered to be its birth time so that the model is intrinsically dynamical. At its birth time~$t$ a vertex is connected 
to all vertices born previously with a probability
$$\varphi\bigg( \frac{t \, |x-y|^d}{(t/s)^{\gamma}} \bigg), $$
where $s<t$ is the birth-time of the older vertex and
$\varphi\colon(0,\infty)\to[0,1]$ is a non-increasing profile function.  As $(t/s)^{\gamma}$ is the asymptotic order of the expected degree at time $t$  of a vertex born at time  $s\downarrow 0$ this  infinite graph model mimics the behaviour of spatial preferential attachment networks~\cite{BarA99, JacM2015}. An upper bound for the chemical distance for spatial preferential attachment is given by Hirsch and M\"onch in~\cite{HirM2020}, but lower bounds are not known.
Our results show that, as in the soft Boolean model, we have in the age-dependent random connection model that ultrasmallness fails if
\smash{$\gamma<\frac{\delta}{\delta+1}$}. If $\gamma>\frac{\delta}{\delta+1}$ we get a lower bound matching that of~\cite{HirM2020}
and we get the precise asymptotics for the chemical distance as stated in~(\ref{limitnew}).%
\bigskip%

The similarity in the behaviour of our examples is a strong hint that there is a large class of spatial graph models which displays universal behaviour markedly different from both the class of spatial scale-free graphs investigated in~\cite{BriKL2018} and the non-spatial scale-free models studied, for example, in~\cite{vdH}.  This idea is further supported by the recent paper by Gracar et al.~\cite{GraLM2021} which investigates the existence of a subcritical percolation phase and reveals the same regime boundary depending on the parameters $\gamma$ and~$\delta$.
In the present paper we explore this universality class of spatial scale-free random graphs by  providing general bounds for the chemical distance based only on upper and lower bounds on the connection probabilities between finitely many pairs of points. This approach is sufficiently flexible to yield the fine results described above for the entire range of models in this class,  including of course  both of the  examples described above. The main difficulty here is to produce lower bounds larger than those obtainable for the non-spatial scale-free models by making substantial use of the restrictions coming from the underlying Euclidean geometry. 
\medskip

\subsection{Framework}

Suppose $\mathscr G$ is a graph with vertex set given by the points of a Poisson process $\mathcal{X}$ of unit intensity on $\mathbb{R}^d \times (0,1)$.  We write the points 
of this process as $\x=(x,t)$ and refer to $x$ as {the} location and $t$ as the mark of the vertex~$\x$. Small marks indicate powerful vertices. We write $\x\sim \y$ if the vertices $\x$, $\y$ are connected by an edge in $\mathscr{G}$. \medskip

We denote by $\mathbb{P}_\mathcal{X}$ the law of $\mathscr{G}$ conditioned on the Poisson process $\mathcal{X}$ and by $\mathbb{P}_{\mathbf{x_1},\ldots, \mathbf{x_n}}$ the law of $\mathscr{G}$ conditioned on the event that $\mathbf{x_1},\ldots, \mathbf{x_n}$ are points of the Poisson process~$\mathcal{X}$.
%
The following assumption depends on parameters  $\delta>1$ and $0\leq \gamma<1$, it leads to \emph{lower bounds} on chemical distances in the graph.
\bigskip

\begin{assum}
\label{ass:main1}
There exists $\kappa>0$ 
such that, for every {finite} set of pairs of vertices $I\subset \mathcal{X}^2$ {in which each vertex appears at most twice}, we have
\begin{align*}
\P_{\mathcal{X}} \bigg(\bigcap_{(\x_i,\y_i)\in I} \set{\x_i\sim \y_i}\bigg)
\leq 
\prod_{(\x_i,\y_i)\in I}  \kappa \, (t_i\wedge s_i)^{-\delta\gamma} (t_i\vee s_i)^{\delta(\gamma-1)}\abs{x_i-y_i}^{-\delta d}
\end{align*}
where $\x_i = (x_i,t_i)$, $\y_i=(y_i,s_i)$.
\end{assum}
\bigskip

In Section~1.4 we shall see several natural examples of geometric random graphs {which satisfy}
Assumption~1.1. Note that the assumption does not include conditional independence of the 
events $\set{\x_i\sim \y_i}$, { which makes several classical tools, such as the BK-inequality,  unavailable in our proofs.} {Without the conditional independence one cannot give a precise description for the degree distribution. However, it is worth noting that Assumption~1.1 is formed in such a way that it implies the existence of a constant $C>0$ for which the \emph{expected} degree of a vertex with mark $t$ is smaller than~$Ct^{-\gamma}$.} The next assumption, 
which we use to give matching \emph{upper bounds} on chemical distances in the ultrasmall regime, however, does contain a conditional independence assumption.\bigskip

\begin{assum}\label{ass:main2}
Given $\mathcal{X}$ edges are drawn independently of each other and there exists $\alpha,\kappa>0$ such that, for every pair of vertices $\x=(x,t), \y=(y,s)\in \mathcal{X}$,

\[
\P_{\x,\y}\set{\x \sim \y} \geq \alpha\, \big(1\wedge \kappa \, (t \wedge s)^{-\delta\gamma} \abs{x-y}^{-\delta d}\big).
\]
\end{assum}
\bigskip

The weight dependent random connection model is a class of graphs introduced in~\cite{GraHMM2019, GraLM2021} as a general framework, which incorporates many (but not all) of our examples of spatial random graphs. In that context our assumptions roughly mean that the random graphs are stochastically dominated by the random connection model with preferential attachment kernel (Assumption 1.1) and dominate the  random connection model with min kernel (Assumption 1.2). {Note, that these models have a scale-free degree distribution with power-law exponent $\tau = 1 + \frac{1}{\gamma}$. Hence, as previously mentioned these examples deviate from the behaviour of non-spatial models and scale-free percolation in that the emergence of ultrasmallness does not depend only on the power-law exponent.}

\subsection{Statement of the main results}

We write $\x\stackrel{n} {\leftrightarrow}\y$ if there exists a path of length $n$
from $\x$ to $\y$ in $\mathscr{G}$, i.e. there exist $\mathbf{x}_1,\ldots,\mathbf{x}_{n-1} \in \mathscr{G}$  such that 
$$\mathbf{x} \sim \mathbf{x}_1\sim \ldots \sim\mathbf{x}_{n-1} \sim \mathbf{y}.$$
We denote by  $\x \leftrightarrow \y$ if $\x\stackrel{n} {\leftrightarrow} \y$ holds for some $n$, i.e. if $\x$ and $\y$ are in the same connected component in $\mathscr{G}$.
The graph distance, or chemical distance, is given by 
\[
\mathd(\mathbf{x},\mathbf{y}) = \min \set{n\in \mathbb{N} \colon \mathbf{x} \stackrel{n} {\leftrightarrow} \mathbf{y}}.
\]
Our main results identify the regime where $\mathscr G$ is ultrasmall, i.e. where the graph distance behaves like an iterated logarithm of the 
Euclidean distance. Moreover in this regime we provide a precise limit theorem for the behaviour of the graph distance of remote points. The first and foremost result in this context  are lower bounds for the chemical distance of two points at large Euclidean distance using only Assumption~1.1.
\bigskip

\pagebreak[3]

\begin{theorem}\label{mainthm}
Let $\mathscr{G}$ be a general geometric random graph which satisfies Assumption~\ref{ass:main1} for some $\gamma \in [0,1)$ and $\delta>1$.\medskip
\begin{enumerate}[(a)]
\item If $\gamma< \frac{\delta}{\delta+1}$, then $\mathscr{G}$ is {\bf not ultrasmall}, i.e. for $\x,\y\in \mathbb{R}^d\times (0,1)$, under $\P_{\x,\y}$, the distance $\mathd (\x,\y)$ is of larger order than $\log\log\abs{x-y}$ with high probability as $\abs{x-y}\to \infty$.\medskip
\item If $\gamma > \frac{\delta}{\delta+1}$, then 
for $\x,\y\in \mathbb{R}^d\times (0,1)$ 
{we have
\[
\mathd (\x,\y) \geq \frac{4\log\log\abs{x-y}}{{\log\big(\frac{\gamma}{\delta(1-\gamma)}\big)}}
\]
under $\P_{\x,\y}$ with high probability as $\abs{x-y} \to \infty$.}
\end{enumerate}
\end{theorem}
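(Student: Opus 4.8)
\emph{Proof strategy.} The plan is to derive both parts from a first-moment estimate on the number of self-avoiding paths of a given length joining $\x$ and $\y$. Since the chemical distance is realised by a self-avoiding path, the event $\{\mathd(\x,\y)\le n\}$ is contained in the event that $\mathscr{G}$ contains a self-avoiding path of length at most $n$ from $\x$ to $\y$; writing $N_m$ for the number of such paths of length exactly $m$, Boole's and Markov's inequalities give $\P_{\x,\y}(\mathd(\x,\y)\le n)\le\sum_{m=1}^{n}\E_{\x,\y}[N_m]$, so it suffices to show that this sum tends to $0$ as $\abs{x-y}\to\infty$ for every $n$ below the claimed threshold. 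The point that makes Assumption~\ref{ass:main1} applicable is that in a self-avoiding path every vertex is incident to at most two edges, so the edge set of such a path is a family of pairs in which each vertex appears at most twice. As every subfamily of such a family is again of this type, applying Assumption~\ref{ass:main1} to the subfamily of those edges whose individual bound is smaller than $1$ shows that the joint connection probability along a self-avoiding path is bounded by the \emph{capped} product $\prod_i\big(1\wedge\kappa\,(t_i\wedge t_{i+1})^{-\delta\gamma}(t_i\vee t_{i+1})^{\delta(\gamma-1)}\abs{x_i-x_{i+1}}^{-\delta d}\big)$.

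Next I would apply the multivariate Mecke equation to integrate over the $m-1$ intermediate vertices; with the capped bound this gives
\[
\E_{\x,\y}[N_m]\ \le\ \int_{(\R^d\times(0,1))^{m-1}}\ \prod_{i=0}^{m-1}\Big(1\wedge\kappa\,(t_i\wedge t_{i+1})^{-\delta\gamma}(t_i\vee t_{i+1})^{\delta(\gamma-1)}\abs{x_i-x_{i+1}}^{-\delta d}\Big)\ \prod_{i=1}^{m-1}\de x_i\,\de t_i ,
\]
where $\x=(x_0,t_0)$ and $\y=(x_m,t_m)$ are fixed and $\x_i=(x_i,t_i)$. The next task is to integrate out the spatial variables. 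Here the hypothesis $\delta>1$ is essential: it ensures that $z\mapsto 1\wedge c\abs{z}^{-\delta d}$ is integrable on $\R^d$ with $\int_{\R^d}(1\wedge c\abs{z}^{-\delta d})\,\de z\asymp c^{1/\delta}$, and it yields a convolution estimate controlling a product of such kernels along a chain whose first and last points are at fixed distance $r=\abs{x-y}$. Carrying out these integrations, and recording which spatial scale is the bottleneck for spanning $r$, reduces the bound to an integral over the mark profile $(t_1,\dots,t_{m-1})$ in which the dependence on $r$ survives only through a factor of the form $1\wedge(\text{mark-dependent constant}\cdot r^{-\delta d})$ at the bottleneck scale, at the price of a harmless factor $C^m$. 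It is convenient to organise this as a recursion in the path length, obtained by integrating over the edge out of the current endpoint, and to iterate it.

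Analysing the recursion amounts to optimising the resulting mark integral over profiles. The extremal profiles of a path from $\x$ to $\y$ descend from the order-one marks of $\x$ and $\y$ to a smallest value, attained at a single high-degree vertex roughly in the middle of the path, and then ascend again, so that the path consists of two monotone ladders meeting at that vertex; the recursion shows that a $k$-step ladder starting from a vertex of mark of order one can reach Euclidean distance at most of order $\exp(\lambda^{k/2})$ with $\lambda:=\tfrac{\gamma}{\delta(1-\gamma)}$ when $\lambda>1$, i.e.\ when $\gamma>\tfrac{\delta}{\delta+1}$, and at most exponentially in $k$ when $\gamma<\tfrac{\delta}{\delta+1}$. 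In case (a) it follows that for every constant $C$ a path of length $C\log\log r$ can reach only a vertex at distance $(\log r)^{O(1)}=o(r)$ from $\x$, so $\E_{\x,\y}[N_m]$ is polynomially small in $r$ for all $m\le C\log\log r$; summing over these $m$ shows $\mathd(\x,\y)>C\log\log\abs{x-y}$ with high probability for every $C$, that is, $\mathscr{G}$ is not ultrasmall. In case (b), an $m$-step path made of two ladders can span distance only of order $\exp(\lambda^{m/4})$, which equals $r^{\,c}$ with $c=c(m,r)<1$ as soon as $m<\tfrac{4\log\log r}{\log\lambda}$; for such $m$ the distance factor is of order $r^{-\delta d(1-c)}$, so $\E_{\x,\y}[N_m]$ is of order at most $C^m r^{-\delta d(1-c)}$, which is polynomially small since $C^m$ is subpolynomial in $r$ for $m=O(\log\log r)$; summing over the $O(\log\log r)$ relevant lengths therefore yields $\sum_{m<4\log\log r/\log\lambda}\E_{\x,\y}[N_m]\to0$, which is the bound $\mathd(\x,\y)\ge\tfrac{4\log\log\abs{x-y}}{\log(\gamma/(\delta(1-\gamma)))}$ with high probability. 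The factor $4=2\cdot2$ --- one $2$ for the two ladders, and one reflecting that progress along a ladder is, as for the classical models with kernels of type~(ii),~(iii), twice as slow for the preferential-attachment-type kernel of Assumption~\ref{ass:main1} as for a product-type kernel --- and the base $\lambda=\tfrac{\gamma}{\delta(1-\gamma)}$ in place of the non-spatial rate $\tfrac{\gamma}{1-\gamma}$ both emerge from this optimisation, matching the upper bound announced in~\eqref{limitnew}.

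The hard part is the combination of the spatial integration with the ensuing optimisation. The convolution estimates must be sharp enough to show that the Euclidean constraint genuinely slows the growth of the attainable distance --- turning $\tfrac{\gamma}{1-\gamma}$ into $\tfrac{\gamma}{\delta(1-\gamma)}$ --- which requires capturing, at each step, the trade-off between how far one can reach and how powerful a vertex one can reach there, rather than treating marks and spatial displacements independently; this is exactly where the underlying geometry is used to go beyond the bounds available for non-spatial scale-free models. Obtaining the sharp constant, as opposed merely to the correct order $\log\log\abs{x-y}$, then depends on performing the profile optimisation precisely while keeping the combinatorial count of admissible profiles and the factors $C^m$ under control, which is possible because of the doubly-exponential separation of scales in the ultrasmall regime.
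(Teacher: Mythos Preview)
Your plan has a genuine gap: the naive first-moment bound $\P_{\x,\y}(\mathd(\x,\y)\le n)\le\sum_{m\le n}\E_{\x,\y}[N_m]$ is \emph{not} small in the regime you need it. The paper says this explicitly (Section~2, just below the Mecke computation): the untruncated path-counting bound ``is only good enough if $\gamma<\tfrac12$. If $\gamma\ge\tfrac12$ the expectation on the right is dominated by paths which are typically not present in the graph.'' Since $\delta>1$ forces $\tfrac{\delta}{\delta+1}>\tfrac12$, the entire regime of part~(b) lies in $\gamma>\tfrac12$, where $\E_{\x,\y}[N_m]$ is blown up by mark profiles that dip to extremely small values early on --- profiles that contribute heavily to the expectation but almost never occur. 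Your sentence ``analysing the recursion amounts to optimising the resulting mark integral over profiles'' conflates two different operations: a first-moment bound \emph{integrates} over all profiles, it does not select the extremal one, and here the integral is dominated by the wrong profiles.

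The fix the paper uses is essential and is missing from your outline: one introduces a decreasing truncation sequence $(\ell_k)$ and splits the event $\{\mathd(\x,\y)\le 2\Delta\}$ into (i) the existence of a path from $\x$ (or $\y$) whose $k$-th vertex is the first to have mark below $\ell_k$ (the events $A_k^{(\x)}$, $A_k^{(\y)}$), and (ii) the existence of a \emph{good} path whose marks respect the thresholds (the events $B_n^{(\x,\y)}$); see~\eqref{eq:truncmombound}. The hard work is then to show that $(\ell_k)$ can be chosen so that the bad-path probabilities sum to $\varepsilon$ while $\ell_\Delta$ still stays above order $|x-y|^{-d}$; this forces $\ell_k^{-1}$ to grow like $\exp\big(B(\gamma/(\delta(1-\gamma)))^{k/2}\big)$ and produces the threshold $2\Delta\approx \tfrac{4\log\log|x-y|}{\log(\gamma/(\delta(1-\gamma)))}$. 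Establishing the right recursion for $(\ell_k)$ --- and in particular showing that two-step ``connector'' moves dominate all longer connection strategies --- requires the augmented graph with conductances, the step-minimising restriction, and the binary-tree classification of strategies (Lemmas~\ref{twoconnlem}--\ref{lem:mu_bound}). None of this machinery is visible in your sketch, and without the truncation the first-moment approach simply does not close.
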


The second result provides a matching upper bound for the chemical distance in the ultrasmall regime under Assumption~1.2. Put together we get the following limit theorem for the chemical distance under Assumptions~1.1 and~1.2 in the ultrasmall regime.
\bigskip

\begin{theorem}\label{limitthm}
Let $\mathscr{G}$ be a general geometric random graph which satisfies Assumption~\ref{ass:main1} 
and Assumption~\ref{ass:main2} for some  $\gamma>\frac{\delta}{\delta+1}$. Then $\mathscr{G}$ is {\bf ultrasmall} and, for $\x,\y \in \mathbb{R}^d\times (0,1)$,  we have 
\begin{equation}\label{gold}
\mathd(\mathbf{x},\mathbf{y}) = (4 + o(1)) \frac{\log\log \abs{x-y}}{\log\big(\frac{\gamma}{\delta(1-\gamma)}\big)}
\end{equation}
under $\P_{\x,\y}(\phantom{i}\cdot\phantom{i} \mid \x \leftrightarrow \y )$ with high probability as $\abs{x-y} \to \infty$.
\end{theorem}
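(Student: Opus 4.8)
The plan is to combine Theorem~\ref{mainthm}(b), which already supplies the lower bound $\mathd(\x,\y)\ge (4+o(1))\tfrac{\log\log|x-y|}{\log(\gamma/\delta(1-\gamma))}$ under Assumption~\ref{ass:main1}, with a matching upper bound of the same order proved under Assumption~\ref{ass:main2}, conditioned on the event $\{\x\leftrightarrow\y\}$. Since Theorem~\ref{limitthm} assumes both assumptions hold, the two bounds clamp the chemical distance to the asserted value. Thus the whole content of the proof is the construction of a connecting path of the right length, \emph{with high probability given that $\x$ and $\y$ lie in the same component}.

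For the upper bound I would run the standard ultrasmall ``greedy path to a hub'' scheme, but carefully tuned to the min-kernel lower bound $\P_{\x,\y}\{\x\sim\y\}\ge\alpha(1\wedge\kappa(t\wedge s)^{-\delta\gamma}|x-y|^{-\delta d})$. Starting from $\x$ (of mark $t$), one first argues that $\x$, being in an infinite component, is with high probability connected within a bounded number of steps to a vertex of mark below some fixed threshold $t_0$; this is where the conditioning on $\{\x\leftrightarrow\y\}$ is used, together with a renewal/second-moment argument showing the backbone of the component passes through low-mark vertices. Then one iterates: a vertex of mark $u$ has, in a spatial ball of radius $r(u)$ comparable to its ``range'' $u^{-\gamma}$, of order $r(u)^d\,\P(\text{connect})\gtrsim u^{-\delta\gamma}$-many points; among these it finds, with high probability, a neighbour of mark at most $u^{\beta}$ for $\beta=\delta(1-\gamma)/\gamma<1$ (here the condition $\gamma>\delta/(\delta+1)$ is exactly $\beta<1$). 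Iterating this doubly-exponential mark-reduction from $t_0$ down to mark $\asymp 1/\log|x-y|$ takes $(1+o(1))\tfrac{\log\log|x-y|}{\log(1/\beta)}$ steps. Doing the symmetric construction from $\y$, and finally connecting the two very-low-mark (hence very-high-degree, large-range) vertices directly — their ranges being polynomial in $|x-y|$, and the probability they fail to connect being summably small — gives total length $(2+o(1))\cdot 2\cdot\tfrac{\log\log|x-y|}{\log(\gamma/\delta(1-\gamma))}$, i.e. the factor $4$.

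The main obstacle, and the place where the spatial geometry genuinely bites, is controlling the greedy step: one must show that within the \emph{bounded} spatial range dictated by Assumption~\ref{ass:main2} (distance decay $|x-y|^{-\delta d}$, not the product-kernel decay of \cite{BriKL2018}) there really are enough lower-mark points to continue, and that the successive balls visited can be kept disjoint enough that the edge-events — which are independent given $\mathcal X$ by Assumption~\ref{ass:main2} — together with the Poisson points, yield a genuine whp statement rather than just a first-moment heuristic. This requires a careful choice of the shrinking mark thresholds and the growing spatial radii, plus a Borel--Cantelli / union-bound argument over the $O(\log\log|x-y|)$ stages, each of which must fail with probability $o(1/\log\log|x-y|)$. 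A secondary technical point is the initial descent to mark $t_0$ under the conditioning $\{\x\leftrightarrow\y\}$: one needs that conditioning on connectivity does not distort the local neighbourhood of $\x$ too badly, which can be handled by a sprinkling argument or by exploiting that the component containing $\x$ is, on the relevant event, already ``large'' in a quantitative sense. Once these are in place, combining with Theorem~\ref{mainthm}(b) finishes the proof of~\eqref{gold}.
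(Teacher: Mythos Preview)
Your reduction to Theorem~\ref{mainthm}(b) for the lower bound is correct, and the overall architecture for the upper bound --- greedy descent from each endpoint to a very powerful vertex, then meet in the middle, with sprinkling to handle the conditioning on $\{\x\leftrightarrow\y\}$ --- has the right shape. However, the core greedy step fails under Assumption~\ref{ass:main2} as you have formulated it.

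The issue is that under the min-kernel lower bound a vertex of small mark $u$ has essentially no \emph{direct} neighbours of even smaller mark. For $v<u$ the connection probability guaranteed by Assumption~\ref{ass:main2} depends only on $v$, not on $u$, so the expected number of neighbours of $(x,u)$ with mark at most $v_0<u$ that Assumption~\ref{ass:main2} can certify is
\[
\int_0^{v_0}\!\int_{\R^d}\alpha\big(1\wedge\kappa\,v^{-\delta\gamma}|y|^{-\delta d}\big)\,\mathd y\,\mathd v \;\asymp\;\int_0^{v_0} v^{-\gamma}\,\mathd v\;=\;\frac{v_0^{\,1-\gamma}}{1-\gamma}\;\longrightarrow\;0
\]
as $v_0\to0$ (and Assumption~\ref{ass:main1} confirms the true expectation is of the same order). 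Thus ``with high probability a neighbour of mark at most $u^\beta$'' cannot be established for any exponent that drives the mark down --- and note that your $\beta=\delta(1-\gamma)/\gamma<1$ gives $u^\beta>u$, i.e.\ a \emph{larger} mark, so the iteration as written goes the wrong way. The unexplained extra factor $2$ in your ``$(2+o(1))\cdot2$'' is precisely the missing mechanism.

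What the paper does, and what is genuinely needed, is to pass from a powerful vertex of mark $u$ to a more powerful one of mark at most $u^{\alpha_1}$ (with $\alpha_1$ just below $\gamma/\delta(1-\gamma)$) in \emph{two} edges, via a \emph{connector} of mark near $1$: see Lemma~\ref{lem:connector_existence}. The powerful vertex has order $u^{-\gamma}$ high-mark neighbours in a ball of volume $u^{-\gamma}$, and a target vertex of mark $\le u^{\alpha_1}$ within distance $u^{-\alpha_2/d}$ is, with overwhelming probability, adjacent to one of those same high-mark vertices. Each doubly-exponential mark reduction therefore costs two edges, not one, and the factor $4$ in~\eqref{gold} is $2\times2$: two endpoints, two edges per iteration. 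The remaining ingredients --- the black/red sprinkling to decouple the initial bounded descent from the greedy phase, and the $O(\log\log\log|x-y|)$ cost of reaching the outermost layer $L_K$ --- are essentially as you outlined.
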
\bigskip

\pagebreak[3]
{\bf Remarks:}
\begin{itemize}
\item For the convergence in 
Theorem~\ref{limitthm} we fix marks $s,t\in(0,1)$ and add points $\x=(x,s)$ and $\y=(y,t)$ to the Poisson process. Then we show that
$$\P_{\x,\y}\Big( \Big|\frac{\mathd(\mathbf{x},\mathbf{y})}{\log\log \abs{x-y}} -  \frac{4}{\log\big(\frac{\gamma}{\delta(1-\gamma)}\big)}\Big|  > \epsilon \, 
\Big| \, \x \leftrightarrow \y\Big)$$
converges to zero if $\abs{x-y} \to \infty$.
\item Stronger results, like explicit lower bounds on $\mathd (\x,\y)$ under Assumption~\ref{ass:main1}  and upper bounds under Assumption~\ref{ass:main2} only will be formulated in Propositions~\ref{prop:smlthm} and Proposition~\ref{prop:uppthm}
below.
\item The results continue to hold \emph{mutatis mutandis} when the underlying Poisson process is replaced by the points of the lattice $\Z^d$ endowed with independent uniformly distributed marks.
\end{itemize}
\bigskip

\subsection{Examples}

\subsubsection{The soft Boolean model}\label{ex:softBool}

As explained in the introduction in the (soft) Boolean model
on $\R^d$  the points $x$  carry independent identically distributed random radii $R_x$
and unordered pairs of points $\{x,y\}$ carry independent identically distributed nonnegative random variables
$X(x, y)$. Given these variables two points $x$ and $y$ are connected iff
$$\frac{|x-y|}{R_x+R_y} < X(x,y).$$
For a lower bound we assume that there are constants $C_1, C_2>0$ such that 
$$\mathbb P(R_x > r)\leq C_1 r^{-d/\gamma}, \qquad 
\mathbb P(X(x,y) > r)\leq C_2 r^{-d \delta}.$$
We can put this model into our framework by constructing the radius $R_x$ of a point 
$\x=(x,t)$ as $R_x=F^{-1}(1-t)$ where $F$ is the distribution function of the radius distribution and $F^{-1}(t)=\inf\{ u \colon F(u)\geq t\}$ its generalised inverse.
Given $\mathcal X$, the probability of a connection of $\x$ and $\y$ is
$$\mathbb P_{\mathcal X}\big(   X(x,y) > \tfrac{|x-y|}{R_x+R_y}\big)
\leq C_2   \tfrac{(F^{-1}(1-t)+F^{-1}(1-s))^{d\delta}}{|x-y|^{d\delta}}.$$
As $F^{-1}(1-t)=\inf\{ u \colon 1-F(u)\le t\}\leq C_1^{\gamma/d} t^{-\gamma/d}$ we infer
that the probability of a connection of $\x$ and $\y$ is bounded by
\smash{$\kappa (t\wedge s)^{-\delta\gamma} \abs{x-y}^{-\delta d}$} and hence, using conditional independence of edges, 
Assumption~1.1 holds. The assumption then implies no ultrasmallness if  \smash{$\gamma<\frac{\delta}{\delta+1}$},
which holds in particular in the hard model for arbitrary $0<\gamma<1$, as $X(x,y)$ is constant and hence $\delta$ can be chosen arbitrarily large. 
Similarly, if \smash{$\gamma>\frac{\delta}{\delta+1}$} and for every small $\epsilon>0$
there are constants $c,C>0$ such that, for all $r\geq 1$,
$$c r^{-d/\gamma-\epsilon} \leq \mathbb P(R_x > r)\leq C r^{-d/\gamma+\epsilon}, \qquad 
c r^{-d\delta-\epsilon} \leq \mathbb P(X(x,y) > r)\leq C r^{-d \delta+\epsilon},$$
then Assumptions 1.1 and~1.2 hold for values arbitrarily close to $\gamma$ and $\delta$
and hence the full limit theorem in probability \eqref{gold} holds. 

\subsubsection{Hirsch's scale-free Gilbert graph}\label{ex:hirsch}

Hirsch~\cite{Hir2017} discusses a model which in its soft version connects every unordered pair of vertices $\{x,y\}$  iff
$$\frac{|x-y|}{R_x\vee R_y} \leq X(x,y),$$
where $R_x, R_y$ and $X(x,y)$ are as in Example \ref{ex:softBool}. He gives a lower bound for the chemical distance of the hard model, which is of the from $ |x-y| /\log |x-y|$. Our result
also shows that the hard model is not ultrasmall albeit with a much smaller lower bound
of an order slightly below $\log|x-y|$. However, this bound extends uniformly to the soft model if \smash{$\delta>\frac{\gamma}{1-\gamma}$}.
This includes long-range percolation, which corresponds to the case $\gamma=0$, in which we know from~\cite{Bis2004} that if $\delta<2$ the chemical distance is indeed of the order of a power of a logarithm. Our results become best possible looking at the soft model with $X$ heavy-tailed with \smash{$\delta<\frac{\gamma}{1-\gamma}$}. In that case  we show that distances can be drastically smaller and satisfy the limit theorem 
in Theorem~1.2.

\subsubsection{The age-dependent random connection model}\label{ex:agedep}

This dynamical model was introduced in~\cite{GraGLM2019} as a simplification of the spatial preferential attachment model of Jacob and M\"orters~\cite{JacM2015,JacM2017}.
A vertex $\x=(x,t)$ is born at time $t$ and at birth connects to all vertices $\y=(y,s)$
born previously with probability
$$\varphi\bigg( \frac{t \, |x-y|^d}{\beta (t/s)^{\gamma}} \bigg), $$
where $\beta>0$ is a density parameter and $\varphi\colon(0,\infty)\to[0,1]$ is a non-increasing profile function standardized to satisfy
$\int \varphi(|x|^d)\, dx=1$. It is easy to see that for $t\gg s$ the expected
degree at time $t$ of a vertex born at time $s$ is of asymptotic order $(t/s)^{\gamma}$, 
so that the model combines preferences of attachment to vertices of high degree and to nearby vertices in a balanced way. If $\varphi(r) \leq C r^{-\delta}$ we see that Assumption~1.1 holds so that ultrasmallness fails if
\smash{$\gamma<\frac{\delta}{\delta+1}$}. But if \smash{$\gamma>\frac{\delta}{\delta+1}$} and also, for every $\epsilon>0$, there is $c>0$
with $\varphi(r) \geq c r^{-\delta+\epsilon}$ for all $r\geq1$, then ultrasmallness holds and we get the asymptotic chemical distance as stated in~(\ref{limitnew}).

\subsubsection{Scale-free percolation}

As explained in Section~1.1 for the model of Deijfen et al.~\cite{DeivHH2013} and other models constructed by taking products of vertex weights and distances we do not expect our results to be relevant or even sharp. {In fact, the dependence on the weights in these models is so strong that the geometry does not play a significant role and the techniques developed in this paper are not needed to understand the behaviour of the chemical distance. For these models} Assumption~1.1 only holds for 
$\gamma<\frac{1}{2}$ and in this case we recover from Theorem~\ref{mainthm} the well-known result that the graph is not ultrasmall when the power-law exponent is $\tau>3$. {For recent results for the chemical distance when $\gamma<\frac{1}{2}$, see~\cite{HaoH2021}.}

\subsubsection{The reinforced age-dependent random connection model}

{We consider a reinforced version of the age-dependent random connection model described above, where the connection probability between vertices is reinforced by additional weights of the nodes. Interestingly, although edges do not occur independently of each other due to the additional weights, our results still apply in full generality. Let the vertex set be a Poisson point process $\mathcal{X}$ on $\mathbb{R}^d\times (0,1)$ as before. We assign in addition to each point $\x\in \mathcal{X}$ an independent identically distributed \emph{reinforcement weight} $W = W_\x$, for which we assume the second moment exists that it is almost surely bounded away from zero, i.e. there exists $\alpha>0$ such that $\P(W\geq \alpha) = 1$. Given $\mathcal{X}$ and the reinforcement weights, edges are then formed independently between $\x = (x,t)$ and $\y = (y,s)$ with probability 
\[
\varphi\left(\tfrac{(W_\x W_\y)^{-1/\delta}}{\beta}(t\wedge s)^\gamma (t\vee s)^{1-\gamma}\abs{x-y}^d\right),
\]
where $\varphi$ is as in Example \ref{ex:agedep}. Let $I \subset \mathcal{X}^2$ be a set of pairs of vertices where each vertex appears at most twice. If there is $C>0$ such that $\varphi(r) \leq C r^{-\delta}$ for all $r>0$, 
\begin{align*}
\E_\mathcal{X} \bigg[\prod_{(\x_i,\y_i)\in I} \!\! & 1\{\x_i \sim \y_i\}\bigg] \\&\leq 
\E_\mathcal{X} \bigg[\prod_{(\x_i,\y_i)\in I} C W_{\x_i} W_{\y_i} \beta^\delta (t_i\wedge s_i)^{-\gamma\delta}(t_i\vee s_i)^{(\gamma-1)\delta}\abs{x_i-y_i}^{-\delta d} \bigg]
\end{align*}
\begin{align*}
&\leq \!\!\prod_{(\x_i,\y_i)\in I} \!\! C \beta^\delta (t_i\wedge s_i)^{-\gamma\delta}(t_i\vee s_i)^{(\gamma-1)\delta}\abs{x_i-y_i}^{-\delta d} \E[W_{\x_i}^2]\E[W_{\y_i}^2],
\end{align*}
where the second inequality holds since each reinforcement weight appears at most twice in the product and they are independent of $\mathcal{X}$. As the second moment of the weights exists, Assumption 1.1 holds for an appropriately chosen $\kappa$. Hence, ultrasmallness fails if $\gamma<\frac{\delta}{\delta+1}$.
On the other hand, we can easily couple the reinforced age-dependent random connection model to an age-dependent random connection model with a modified density parameter, such that the later is a subgraph of the former.
Indeed, for each pair of vertices we draw an independent uniform random variable $U(\x,\y)$. Given the Poisson process~$\mathcal{X}$, the reinforcement weights and the family \smash{$(U(\x,\y))_{\x,\y\in \mathcal{X}}$}, we can construct the age-dependent random connection model and the reinforced model in the following way. First, add an edge between any pair of vertices when
\[
U(\x,\y) \leq \varphi\left(\frac{\alpha^{-2/\delta}}{\beta}(t\wedge s)^\gamma (t\vee s)^{1-\gamma}\abs{x-y}^d\right).
\]
This leads to the age-dependent random connection model with new density parameter $\tilde{\beta} = \beta\alpha^{-2/\delta}$. Since $W\geq \alpha$ almost surely, each such edge is also added in the reinforced model. To get the full reinforced model, we add additional edges to hitherto unconnected pairs of vertices if
\[
U(\x,\y) \leq \varphi\left(\frac{(W_\x W_\y)^{-1/\delta}}{\beta}(t\wedge s)^\gamma (t\vee s)^{1-\gamma}\abs{x-y}^d\right).
\]
As the age-dependent random connection model is ultrasmall when 
\smash{$\gamma>\frac{\delta}{\delta+1}$} and if for every $\epsilon>0$, there exists $c>0$
with $\varphi(r) \geq c r^{-\delta+\epsilon}$ for all $r\geq1$, the reinforced model is ultrasmall as well and we  get the asymptotic chemical distance as stated in~(\ref{limitnew}) under both tail assumptions stated for $\varphi$ in this section. Note that Examples \ref{ex:softBool} and \ref{ex:hirsch} can similarly be reinforced, and similar conclusions can consequently be drawn.
}

\subsubsection{Ellipses percolation}
In \cite{TeiU2017} Teixeira and Ungaretti introduce a model on $\mathbb{R}^2$ as a collection of random ellipses centred on points of a Poisson process $\mathcal{X}$ on $\mathbb{R}^2\times(0,1)$ with uniform marks $t$, from which the size of the major {half-}axis is derived as $t^{-\gamma/2}$ while its direction  is sampled uniformly. The size of the minor {half-}axis is one. The random graph is then constructed by taking the Poisson process as the vertex set and forming edges given the collection of random ellipses between pairs of points of the point process if their ellipses intersect. Hil\'ario and Ungaretti \cite{HilU2021} show that, for $\gamma \in (1,2)$, the model is ultrasmall. \bigskip 
 
We introduce a soft version of this model, where for each pair of vertices $\x,\y$ we consider copies of their ellipses where the size of the major axes are multiplied with independent, identically distributed positive heavy-tailed random variables $X=X(\x,\y)$ with $\P(X>r)\sim r^{-2\delta}$ for some $\delta>1$. An edge between $\x$ and $\y$ is then formed if the new ellipses intersect. Note that given $\mathcal{X}$ edges are not drawn independently of each other, as the neighbourhood of each vertex depends on the orientation of the ellipses. Our results show that, for $\gamma \in [0,1)$, the original model is never ultrasmall and the soft model is not ultrasmall if $\gamma <\frac{\delta}{\delta+1}$. 
%
{We see that if an edge is formed between $\x = (x,t)$ and $\y = (y,s)$, this implies that balls around $x$ and $y$ with radii \smash{$X(\x,\y)t^{-\gamma /2}$} and \smash{$X(\x,\y)s^{-\gamma /2}$} intersect.} 
Thus, there exists $\kappa > 0$ such that
\[
\P_{\mathcal{X}}\set{\x\sim \y} \leq \P\big(X\geq \tfrac{\abs{x-y}}{t^{-\gamma /2} + s^{-\gamma /2}}\big) \leq \kappa (t\wedge s)^{-\gamma \delta} (t\vee s)^{(\gamma-1)\delta} \abs{x-y}^{-2\delta}.
\]
Since the random variables $X(\x,\y)$ are independent, Assumption 1.1 holds for $\gamma \in [0,1)$ and $\delta>1$ and the claimed result follows.

\section{Proof of the lower bounds for the chemical distance}

\paragraph{Truncated first moment method}

To prove the lower bounds of Theorem \ref{mainthm} we find an upper bound for $\P_{\x,\y}\set{\mathd (\x, \y) \leq 2\Delta}$ and choose $\Delta$ as large as possible while keeping 
the probability sufficiently small. Note that the definition of the graph distance~$\mathd$ can be reduced to the existence of self-avoiding paths, since if there exists a path of length $n$ between two given vertices there also exists a self-avoiding path with shorter or equal length between those two. Hence, the paths considered throughout this section are assumed to be self-avoiding. The event $\set{\mathd (\x, \y) \leq 2\Delta}$ is equivalent to the existence of at least one path between $\x$ and $\y$ of length smaller than $2\Delta$. Hence,
\begin{align*}
\P_{\x,\y}\set{\mathd (\x, \y) \leq 2\Delta} &= \P_{\x,\y}\bigg(\bigcup_{n=1}^{2\Delta} \bigcup_{\x_1,\ldots,\x_{n-1}\in \mathscr{G}}^{\neq} \set{\x_0 \sim \x_1 \sim \ldots \sim \x_{n-1} \sim \x_n}\bigg)\\
&\leq \sum_{n=1}^{2\Delta} \E \bigg[\sum_{\x_1,\ldots,\x_{n-1} \in \mathscr{G}}^{\neq} \mathbb{P}_{\mathcal{X}\cup\set{\mathbf{x}, \mathbf{y}}}\set{\x_0\sim \x_1\sim \ldots\sim \x_{n-1} \sim \x_n}\bigg],
\end{align*}
where $\x = \x_0$, $\y = \x_n$, $\bigcup^{\neq}$ (resp. $\sum^{\neq}$) denotes the union (resp. sum) over all possible sets of pairwise distinct vertices $\x_0,\ldots,\x_n$ of the Poisson process and $\E$ is the expectation with respect to the law of a Poisson process with unit intensity on $\mathbb{R}^d\times (0,1)$. {To keep notation throughout the paper short we will abbreviate the previous notation and write $\sum_{\x_1,\ldots,\x_m}$ for the sum over all sets of $m$ distinct vertices of the Poisson process.} We get, by using Mecke's equation~\cite{LasP2017} 
and {Assumption~1.1} that
\begin{align*}
& \P_{\x,\y}\set{\mathd (\x, \y) \leq 2\Delta}\\
& \leq \sum_{n=1}^{2\Delta} \int\limits_{(\mathbb{R}^d\times (0,1])^{n-1}} \bigotimes_{i=1}^{n-1} \mathd \x_i  \, \E \left[\mathbb{P}_{\mathcal{X}\cup\set{\mathbf{x},\x_1, \ldots, \x_{n-1}, \mathbf{y}}}\set{\x_0\sim \x_1\sim \ldots\sim \x_{n-1} \sim \x_n}\right]\\
&\leq \sum_{n=1}^{2\Delta} \int\limits_{(\mathbb{R}^d\times (0,1])^{n-1}} \!\! \bigotimes_{i=1}^{n-1} \mathd (x_i,t_i) \, \prod_{j=0}^{n-1} 
1 \wedge\!   \kappa (t_j\wedge t_{j+1})^{-\gamma \delta} (t_j\vee t_{j+1})^{\delta(\gamma-1)} \abs{x_j-x_{j+1}}^{-\delta d } .
\end{align*}
This {bound is only good enough if $\gamma<\frac12$. If $\gamma \geq\frac12$ the expectation on the right is dominated by} paths which are typically not present in the graph. These are paths which connect $\x$ or $\y$ quickly to vertices with small mark $t$. Our strategy is therefore to truncate the admissible mark of the vertices of a possible path between $\x$ and $\y$. We define a decreasing sequence $(\ell_k)_{k\in \mathbb{N}_0}$ of thresholds and call a tuple of vertices $(\x_0,\ldots,\x_n)$ \emph{good} if their marks 
satisfy $t_k\wedge t_{n-k} \geq \ell_k$ for all $k\in \set{0,\ldots, n}$. A path consisting of a good tuple of vertices is called a good path. We denote by $A^{_{(\x)}}_k$ the event that there exists a path starting in $\x$ which fails this condition after exactly $k$ steps, i.e. a path $((x,t),(x_1,t_1),\ldots (x_k,t_k))$ with $t\geq \ell_0, t_1\geq \ell_1,\ldots, t_{k-1} \geq \ell_{k-1}$, but $t_k<\ell_k$. Furthermore we denote by $B^{_{(\x,\y)}}_n$ the event that there exists a good path of length $n$ between $\x$ and $\y$. Then, for given vertices $\x$ and $\y$
\begin{equation}\label{eq:truncmombound}
\P_{\x,\y}\set{\mathd (\x, \y) \leq 2\Delta} \leq \sum_{n=1}^\Delta \P_{\x}(A^{(\x)}_n) + \sum_{n=1}^\Delta \P_{\y}(A^{(\y)}_n) + \sum_{n=1}^{2\Delta} \P_{\x,\y}(B^{(\x,\y)}_n). \tag{TMB}
\end{equation}
This decomposition is the same as for the mean-field models in~\cite{DerMM2012}. The main feature of our proof is to show that the geometric restrictions and resulting correlations in our spatial random graphs make it much more difficult for a path to connect to a vertex with small mark. Hence a larger sequence $(\ell_k)$ of thresholds can be chosen that still makes the two first sums on the right of~\eqref{eq:truncmombound} small, allowing the third sum to be small for a larger choice of $\Delta$. This requires a much deeper analysis of the graph and its spatial embedding.

\subsection{Outline of the proof}\label{subsec:outline}
The characteristic feature of the shortest path connecting two typical vertices is that, starting from both ends, the path contains a subsequence of  increasingly powerful vertices. The two parts started at the ends meet roughly in the middle in a vertex of exceptionally high power depending on the distance between the starting vertices. In our framework powerful vertices are characterised by small marks. For geometric random graphs fulfilling Assumption~\ref{ass:main1} we show that  
arbitrary strategies connecting increasingly powerful vertices are dominated by an
\emph{optimal strategy} by which paths make connections between vertices of increasingly high power in a way depending on the parameters~$\gamma$ and $\delta$ in our assumption: \medskip

\begin{itemize}
\item If \smash{$\gamma > \frac{\delta}{\delta+1}$} 
we connect two powerful vertices $\x$ and $\y$ 
via a \emph{connector}, a single vertex with a larger mark 
which is connected to both $\x$ and $\y$;\smallskip
\item if 
\smash{$\gamma<\frac{\delta}{\delta + 1}$} we connect them by a single edge.\medskip
\end{itemize}
In both cases, we now sketch how our argument works on paths containing only the optimal type of connection between powerful vertices. The principal challenge of the proof will however be to show how these proposed optimal strategies dominate the entirety of other possible strategies. This is particularly hard in the former case, because a vast number of potential strategies leads to a massive entropic effect that needs to be controlled.
Note also that at this point we need not show that the proposed optimal strategies actually work. This (easier) part of the proof requires Assumption~\ref{ass:main2} and is carried out in Section~3.
\smallskip

\begin{figure}[h]
\begin{center}
\begin{tikzpicture}[scale=0.4, every node/.style={scale=0.7}]
\draw[dashed] (-1,10.5) node[left] {$1$} -- (25, 10.5);
\draw[dashed] (-1,-1) node[left] {$0$} -- (25, -1);
\node (A) at (0,6)[circle, fill = black ,label={left:$\mathbf{x}$}] {};
\node (B) at (2,9.9)[circle, fill = gray, label={}] {};
\node (C) at (4,5) [circle, fill = black, label={left:$\mathbf{x}_2$}] {};
\node (D) at (6,10)[circle,  fill= gray, label = {}] {};
\node (E) at (8, 3)[circle, fill = black, label={}] {};
\node (F) at (10,9.8) [circle, fill= gray, label={}] {};
\node (G) at (12,0)[circle, fill = black, label={left:$\mathbf{x}_{n}$}] {};
\node (H) at (14,9.9)[circle,  fill= gray, label={}] {};
\node (I) at (16,2.8) [circle, fill = black, label={}] {};
\node (J) at (18,10.1)[circle,  fill= gray, label = {}] {};
\node (K) at (20,4.8)[circle, fill = black, label= {left:$\mathbf{x}_{2n-2}$}] {};
\node (L) at (22,9.7) [circle,  fill= gray, label= {}] {};
\node (M) at (24,5.5)[circle, fill = black, label={left:$\mathbf{y}$}]{};
\draw (A) to (B);
\draw (B) to (C);
\draw[dotted] (C) to (D);
\draw[dotted] (D) to (E);
\draw (E) to (F);
\draw (F) to (G);
\draw (G) to (H);
\draw (H) to (I);
\draw[dotted] (I) to (J);
\draw[dotted] (J) to (K);
\draw (K) to (L);
\draw (L) to (M);
\draw[dotted] (0-0.75,5) -- (0+0.75,5) node[right] {$\ell_0$};
\draw[dotted] (4-0.75,4.4) -- (4+0.75,4.4) node[right] {$\ell_2$};
\draw[dotted] (8-0.75,2.3) -- (8+0.75,2.3) node[right] {$\ell_{n-2}$} ;
\draw[dotted] (12-0.75,-0.5) -- (12+0.75,-0.5) node[right] {$\ell_{n}$};
\draw[dotted] (16-0.75,2.3) -- (16+0.75,2.3) node[right] {$\ell_{n-2}$};
\draw[dotted] (20-0.75,4.4) -- (20+0.75,4.4) node[right] {$\ell_2$};
\draw[dotted] (24-0.75,5) -- (24+0.75,5) node[right] {$\ell_0$};
\end{tikzpicture}
\caption{An example of a path with optimal connection type for $\gamma>\frac{\delta}{\delta+1}$. The horizontal axis corresponds to the sequential numbering of vertices on the path, the vertical axis represents the mark space. Powerful vertices (indicated by black dots) alternate with connectors (indicated by grey dots). }
\end{center}
\end{figure}
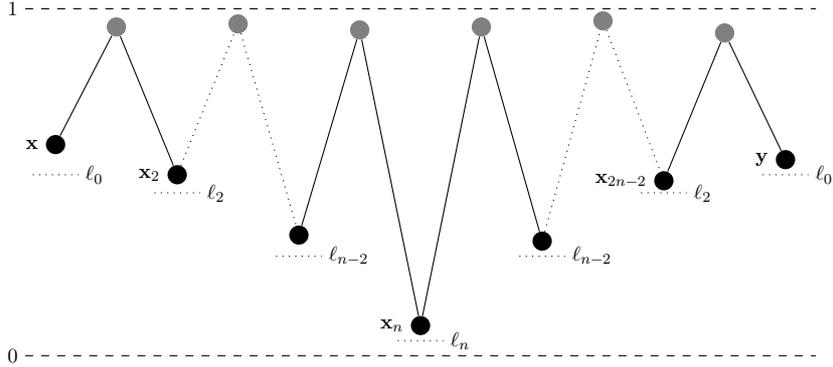

\pagebreak[3]
In the case \smash{$\gamma>\frac{\delta}{\delta+1}$} the optimal connection strategy 
is to follow a path of length~$2n$ between $\x$ and $\y$, where we assume that $n$ is even 
 and that the vertices $\x_1=(x_1,t_1),\ldots,$ $\x_{2n-1}=(x_{2n-1},t_{2n-1})$ of the path satisfy that \smash{$t_{2(k+1)}<t_{2k}<t_{2k+1}$} and \smash{$t_{2n-2(k+1)}<$}
 \smash{$t_{2n-2k}<t_{2n-2k+1}$} for all $k=0,\ldots,n/2$, i.e. the vertices with even index can be seen as powerful vertices, while the ones with odd index represent the connectors between them, see Figure~1. Note that at this point we make no assumptions on the locations of these vertices. \medskip
 
For arbitrary $\varepsilon>0$, we now determine a truncation sequence $(\ell_k)_{k\in \mathbb{N}_0}$, such that paths starting in $\x$, resp. $\y$, which are not good, only exist with a probability smaller than~$\varepsilon$. 
To do so, we establish an upper bound for the probability of the event $A_n^{_{(\x)}}$ that there exists a path starting in $\x$ whose $n$-th vertex is the first vertex which has a mark smaller than the corresponding~$\ell_n$. We denote by $N(\x,\y,n)$ the number of paths of length $n$ from $\x=(x,t)$ to a vertex $\y=(y,s)$ whose vertices $(x_1,t_1),\ldots (x_{n-1},t_{n-1})$ fulfill $t_{2(k+1)}<t_{2k}<t_{2k+1}$ for all  $k=0,\ldots,\gaus{n/2}-1$ and which is one half of a good path, i.e. $t\geq \ell_0, t_1\geq \ell_1,\ldots, t_{n-1} \geq \ell_{n-1}$. The mark of $\y$ is not restricted in this definition and is therefore allowed to be smaller than $\ell_n$.  Hence, in this case the event $A_n^{_{(\x)}}$ can only occur for $n$ even, since by definition a connector is less powerful than the preceding and following vertex and therefore has a mark larger than the corresponding~$\ell_n$. For $n$ even we have by Mecke's equation that
\[
\P_{\x}(A_n^{(\x)}) \leq  \int\limits\limits_{\mathbb{R}^d \times (0,\ell_n]} \mathd \y \, \E_{\x,\y} N(\x,\y,n).
\]
Since the existence of a path counted in $N(\x,\y,n)$ is equivalent to the existence of vertices $\z_1,\ldots, \z_{n/2-1}$ such that the marks are bounded from below by $\ell_2,\ell_4,\ldots, \ell_{n-2}$, with $\z_0=\x, \z_{n/2}=\y$
the marks {$u_0,\ldots, u_{n/2}$} of $\z_0,\ldots, \z_{n/2}$ are decreasing, and $\z_i, \z_{i+1}$ are connected via a single connector, Mecke's equation yields 
\begin{equation}
\E_{\x,\y}N(\x,\y,n) \leq \int\limits_{\mathbb{R}^d\times (\ell_{2},{u_0}]}\!\! \mathd \z_1 \cdots \!\!\!\!\!\!\int\limits_{\mathbb{R}^d\times(\ell_{n-2},{u_{n/2-2}}]}\mathd \z_{n/2-1} \E_{\z_0,\ldots,\z_{n/2}}\bigg[\prod_{i=1}^{n/2}K(\z_i,\z_{i-1},2)\bigg],\label{recbound_full}
\end{equation}
where $K(\z_i,\z_{i+1},2)$ is the number of connectors between $\z_i$ and $\z_{i+1}$. Using  Mecke's equation and Assumption~\ref{ass:main1} we have
\[
\E_{\z_0,\ldots,\z_{n/2}}\bigg[\prod_{i=1}^{n/2}K(\z_i,\z_{i-1},2)\bigg]\leq \prod_{i=1}^{n/2} e_K(\z_i,\z_{i-1},2),
\]
where 
\[
e_K(\z_{i},\z_{i-1},2) = \!\!\!\int\limits_{\mathbb{R}^d\times (u_{i-1}\vee u_i,1)}\!\!\!\mathd \z \, \rho(\kappa^{-1/\delta}u_{i-1}^\gamma u^{1-\gamma}\abs{z_{i-1}-z}^d)\rho(\kappa^{-1/\delta}u_i^\gamma u^{1-\gamma}\abs{z_i-z}^d),
\]
for $\rho(x):=1\wedge x^{-\delta}$ and $\z_i=(z_i,u_i)$, $\z_{i-1}=(z_{i-1},u_{i-1})$. 
We see in Lemma~\ref{twoconnlem} that there exists  $C>0$ such that, for two given vertices $\x = (x,t)$ and $\y = (y,s)$ far enough from each other,
\begin{equation}
e_K(\x,\y,2) \leq C\rho \big(\kappa^{-1/\delta} (t\wedge s)^{\gamma}(t\vee s)^{\gamma/\delta}\abs{x-y}^{d}\big).\label{eq:twoconn_ass}
\end{equation}
This inequality holds for the optimal connection type between two powerful vertices of the path and we will see that this type of bound holds also for the case of multiple connectors between two powerful vertices (cf. Lemma \ref{lem:kconn}). 
It also clearly displays the influence of the spatial embedding of the random geometric graph via the parameter $\delta$. Assuming \eqref{eq:twoconn_ass} for the moment, we obtain
\begin{align}
\begin{split}
& \P_{\x}(A_n^{(\x)})\leq \\
&\int\limits_{\mathbb{R}^d\times (\ell_{2},{u_0}]}\mathd \z_1 \cdots \!\!\!\!\!\!\!\!\!\int\limits_{\mathbb{R}^d\times(\ell_{n-2},{u_0}]}\mathd \z_{n/2-1} \int\limits_{\mathbb{R}^d \times (0,\ell_n]} \mathd \z_{n/2}\, \prod_{i=1}^{n/2}C\rho \big(\kappa^{-1/\delta} u_i^{\gamma}u_{i-1}^{\gamma/\delta}\abs{z_i-z_{i-1}}^{d}\big), \label{eq:badpaths_optimal}
\end{split}
\end{align}
where \smash{$\z_i = (z_i,u_i)$ for $i=0,\ldots n/2$} {and where we without loss of generality integrate up to $u_0$ in all but the last integral}. When dealing with a general (rather than the optimal) connection strategy, we will use a classification of the strategies in terms of binary trees. Left-to-right exploration of the tree will reveal the structure of the decomposition that replaces the straightforward decomposition in \eqref{recbound_full} and additional information on the location of the vertices will be encoded in terms of colouring of the leaves.
Figure~\ref{fig:dec_order-tree_optimal} displays the classifying binary tree for the optimal connection type.

\begin{figure}[h]
\begin{center}
\begin{tikzpicture}[scale=0.30, every node/.style={scale=0.7}]
\node (C) at (4,5.5)[circle, draw, fill= black, label={left:$\x$}] {};
\node (D) at (6,10)[circle,  draw, fill=gray, label = {$\x_1$}] {};
\node (E) at (8,4)[circle,  draw, fill=black, label = {left:$\x_2$}] {};
\node (F) at (10, 9.7)[circle, draw,fill = gray, label={$\x_3$}] {};
\node (G) at (12,3) [circle, draw, fill=black, label={left:$\x_4$}] {};
\node (H) at (14,10.3)[circle, draw,fill=gray, label={$\x_5$}] {};
\node (I) at (16,1) [circle, draw,fill= black, label={left:$\x_6$}] {};
\node (J) at (18,9.8) [circle, draw,fill= gray, label={$\x_7$}] {};
\node (K) at (20,0)[circle, fill = black, label={left:$\y$}] {};
\draw (C) to (D) to (E) to (F) to (G) to (H) to (I) to (J) to (K);
\end{tikzpicture}
\qquad
\begin{tikzpicture}[scale=0.40, every node/.style={scale=0.7}]
\node (A) at (9, 1)[circle, draw,fill = black, label={left:$\x_6$}] {};
\node (B) at (6,3)[circle, draw, fill= black, label={left:$\x_4$}] {};
\node (C) at (3,5)[circle, draw, fill= black ,label={left:$\x_2$}] {};
\node (D) at (0,7)[circle, draw, fill=black, label={left:$\x$}] {};
\node (E) at (2,8)[circle,  draw, fill=gray, label = {$\x_1$}] {};
\node (F) at (5,6)[circle,  draw, fill=gray, label = {$\x_3$}] {};
\node (G) at (8,4) [circle, draw,fill= gray, label={$\x_5$}] {};
\node (H) at (11,2) [circle, draw, fill=gray, label={$\x_7$}] {};
\draw (A) to (B) to (C) to (D);
\draw[dashed] (A) to (H);
\draw[dashed] (B) to (G);
\draw[dashed] (C) to (F);
\draw[dashed] (D) to (E);
\end{tikzpicture}
\caption{Representation of a path with optimal connection type 
by a binary tree. {For a less trivial example resulting from a general connection strategy, see Figure \ref{fig:dec_order-tree2}.}
\label{fig:dec_order-tree_optimal}}
\end{center}
\end{figure}
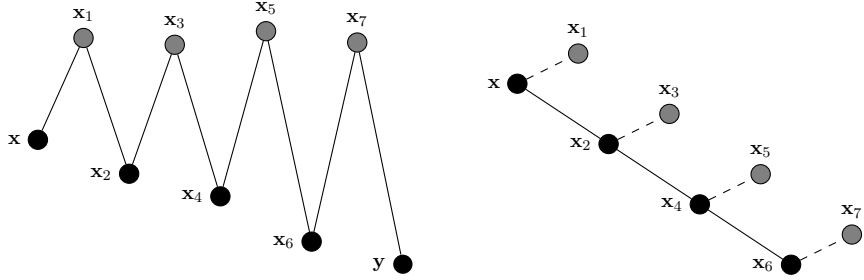

\noindent
For a sufficiently large constant $c>0$ the right-hand side of \eqref{eq:badpaths_optimal} can be bounded by $$
c^{n/2}\ell_{n}^{1-\gamma}\ell_0^{-\gamma/\delta}\prod_{i=1}^{n/2-1}\ell_{2i}^{1-\gamma-\gamma/\delta}$$ as shown in  Lemma~\ref{lem:mu_bound} considering all paths.
With $\ell_0$ smaller than the mark of $\x$ we choose the truncation sequence $(\ell_k)$ for $\varepsilon>0$, such that
\begin{equation}
c^{n/2}\ell_{n}^{1-\gamma}\ell_0^{-\gamma/\delta}\prod_{i=1}^{n/2-1}\ell_{2i}^{1-\gamma-\gamma/\delta} = \frac{\varepsilon}{\pi^2n^2}, \label{eq:truncdef}
\end{equation}
and we have
\[
\sum_{n=1}^\Delta \P_{\x}(A^{(\x)}_n) = \sum_{\substack{n=1\\ n\ even}}^\Delta \P_{\x}(A^{(\x)}_n) \leq \sum_{\substack{n=1\\ n\ even}}^\Delta c^{n/2}\ell_{n}^{1-\gamma}\ell_0^{-\gamma/\delta}\prod_{i=1}^{n/2-1}\ell_{2i}^{1-\gamma-\gamma/\delta} \leq \sum_{n=1}^\infty \frac{\varepsilon}{\pi^2n^2} = \frac{\varepsilon}{6}.
\]
Writing $\eta_n := \ell_n^{-1}$ we can deduce from \eqref{eq:truncdef}  a recursive description of $(\ell_n)_{n\in\mathbb{N}_0}$  such that
$$\eta_{n+2}^{1-\gamma}  = \frac{(n+2)^2}{n^2} c \eta_n^{\gamma/\delta}.$$
Consequently there exist $b>0$ and $B>0$ such that
$
\eta_n \leq b\exp(B(\gamma/(\delta(1-\gamma)))^{n/2}). 
$
We close the argument with heuristics that leads from this truncation sequence to a lower bound for the chemical distance. Let $\x$ and $\y$ be two given vertices. If there exists a path of length $n<\log \abs{x-y}$ between them, there must exist at least one edge in this path which is longer than \smash{$\frac{\abs{x-y}}{\log \abs{x-y}}$}. For $\abs{x-y}$ large, this edge typically must have an endvertex whose mark is, up to a multiplicative constant, smaller than \smash{$\abs{x-y}^{-d}$}. Hence, if we choose $$\Delta < (2 + o(1)) \frac{\log\log \abs{x-y}}{\log\big(\frac{\gamma}{\delta(1-\gamma)}\big)}$$ we ensure $\ell_\Delta$ is of larger order than $\abs{x-y}^{-d}$. Therefore there is no good path whose vertices are powerful enough to be an endvertex of an edge longer than \smash{$\frac{\abs{x-y}}{\log\abs{x-y}}$} and consequently no good path of length shorter than $2\Delta$ can exist between $\x$ and $\y$.
\medskip \pagebreak[3]

Turning to the case $\gamma < \frac{\delta}{\delta +1}$, we consider paths whose powerful vertices are connected directly to each other. 
For a path of length $n$ between two given vertices $\x$ and $\y$ we assume that $n$ is even and for the vertices $\x_1=(x_1,t_1),\ldots, \x_{n-1}=(x_{n-1},t_{n-1})$ of the path we assume that we have $t_0>t_1>\ldots>t_{n/2}$ and $t_n>t_{n-1}>\ldots>t_{n/2}$, where $t_0$ is the mark of $\x$ and $t_n$ the mark of $\y$. We again make no restrictions on the locations of those vertices. Restricting the paths described in $A_n^{_{(\x)}}$ and $B_n^{_{(\x,\y)}}$ to paths with this structure we  follow the same argumentation as above to establish sufficiently small bounds for the event $A_n^{_{(x)}}$ for a given vertex $\x = (x_0,t_0)$,
\[
\P_{\x}(A_n^{(\x)}) \leq \!\!\!\int\limits_{\mathbb{R}^d\times (\ell_{1},t_0]}\!\!\mathd \x_1 \cdots \!\!\!\!\int\limits_{\mathbb{R}^d\times(\ell_{n-1},t_0]}\!\!\!\!\!\mathd \x_{n-1} \int\limits_{\mathbb{R}^d \times (0,\ell_n]} \!\!\mathd \x_n\prod_{i=1}^{n}C\rho \big(\kappa^{-1/\delta} t_i^{\gamma}t_{i-1}^{1-\gamma}\abs{x_i-x_{i-1}}^{d}\big),
\]
where we again without loss of generality integrate over a larger range. For $c>0$ large enough, the right-hand side can be further bounded by $$c^{n}\ell_{n}^{1-\gamma}\ell_0^{\gamma-1}\prod_{i=1}^{n-1}\log\left(\frac{1}{\ell_{i}}\right),$$ see Lemma \ref{lem:mu_bound_non}. Choosing $\ell_0 < t_0$ and $(\ell_n)_{n\in \mathbb{N}_0}$ for $\epsilon>0$, such that the last displayed term equals 
$\frac{\varepsilon}{\pi^2n^2}$
ensures that $\sum_n \P_\x(A_n^{\x})<\frac{\varepsilon}{6}$ and by induction we see that this choice is possible while for any $p>1$ there exists $B>0$ such that
$
\eta_n \leq B^{n\log^p(n+1)}.
$
Following the same heuristics as before leads to the choice $$\Delta \leq \frac{c\log\abs{x-y}}{(\log\log\abs{x-y})^p}$$ for some constant $c>0$ such that paths between $\x$ and $\y$ with length shorter than $2\Delta$ do not exist with high probability.\pagebreak[3]

\subsection{The ultrasmall regime}\label{subsec:ultsml_phase}

We now start the full proof in the case $\gamma>\frac{\delta}{\delta+1}$ considering all possible connection strategies.  We prepare this by first modifying the graph by adding edges between vertices which are sufficiently close to each other. We call
a path \emph{step minimizing} if it connects any pair of vertices on the path by a direct edge, if it is available. Note that the  length of any path connecting two fixed vertices can be bounded from below by the length of a step mimimizing path connecting the two vertices.
Two spatial constraints emerge from this: On the one hand, vertices on a step minimizing path in the modified graph that are not neighbours on the path cannot be near to each other. On the other hand, vertices connected by one of the added edges have to be near to each other. 
To make full use of these constraints we need to distinguish  between original edges and edges added to the graph. This can be done efficiently by endowing every edge with a \emph{conductance}, which is one for original and~two for added edges.\medskip

More precisely, we consider a graph $\tilde{\mathscr{G}}$ where edges are endowed with
{conductances}  as follows: First, create a copy of $\mathscr{G}$ and assign to every edge conductance one. Then, between two vertices $\x = (x,t)$ and $\y = (y,s)$ of $\tilde{\mathscr{G}}$ an edge is added to $\tilde{\mathscr{G}}$ with conductance two whenever 
\[
\abs{x-y}^d\leq \kappa^{1/\delta}(t\wedge s)^{-\gamma} (t\vee s)^{-\gamma/\delta}.
\]
Since all conductances and edges of $\tilde{\mathscr{G}}$ are deterministic functionals of $\mathscr{G}$, there exists an almost sure correspondence between $\mathscr{G}$ and $\tilde{\mathscr{G}}$, under which an edge with conductance one in $\tilde{\mathscr{G}}$ implies the existence of the same edge in $\mathscr{G}$. With conductances assigned to every edge of $\tilde{\mathscr{G}}$, we define the conductance of a path $P=(\x_0,\ldots,\x_n)$ \smash{in~$\tilde{\mathscr{G}}$} as the sum over all conductances of the edges of $P$ and denote it by $w_P$.
\medskip

\noindent We call a self-avoiding path $P=(\x_0,\ldots,\x_n)$ in $\mathscr{G}$ or $\tilde{\mathscr{G}}$ \emph{step minimizing}
\begin{equation}
\text{if there exists no edge between }\x_i\text{ and }\x_j\text{ for all }i,j\text{ with }\abs{i-j}\geq 2. \label{eq:dist_min_definition}
\end{equation}
Note that a step minimizing path in $\mathscr{G}$ 
is not necessarily step minimizing 
\smash{in~$\tilde{\mathscr{G}}$}, since there could exist an edge of conductance two between two vertices of the path that would reduce the number of steps. But by removing the vertices connecting such a pair of vertices from the path we can shorten the path to a step minimizing path
in~$\tilde{\mathscr{G}}$ whose length and conductance is no more than the length of the original path. Hence the chemical distance  $\mathd(\mathbf{x},\mathbf{y})$ between vertices $\x$ and $\y$ in $\mathscr{G}$ is larger or equal than the conductance $\mathd_w(\mathbf{x},\mathbf{y}) := \min \set{w_P: P\ \text{is a path between }\x\ \text{and}\ \y}$ between them in $\tilde{\mathscr{G}}$.\pagebreak[3]\medskip%

To bound the probabilities occurring in \eqref{eq:truncmombound}, we express the events on $\mathscr{G}$ with the help of corresponding events on $\tilde{\mathscr{G}}$ by replacing the role of the length of a path by its conductance. The role of the conductance is crucial, as it allows us to distuingish newly added edges in a path, which is necessary to keep the bounds of the probabilities in \eqref{eq:truncmombound} sufficiently small. We call a path $P = (\x_0,\ldots,\x_n)$ in $\tilde{\mathscr{G}}$ \emph{good} if its marks satisfy $t_k\geq \ell_{w_P(k)}$ and $t_{n-k}\geq \ell_{w_P-w_P(n-k)}$ for all $k = 0,\ldots,n$, {where $w_P(k)$ is the conductance of $P$ between $\x_0$ and $\x_k$.} We denote by \smash{$\tilde{A}_k^{\x}$} the event that there exists a step minimizing path starting in $\x$ in $\tilde{\mathscr{G}}$ with conductance $k$ which fails to be good on its last vertex. \label{defatilde} Notice that if there exists a path described by the event $A_k^{\x}$, i.e. a {path for which the $k$-th vertex is the first one whose mark is smaller than the corresponding truncation value $\ell_k$}, then due to the correspondence between $\mathscr{G}$ and $\tilde{\mathscr{G}}$ there also exists a step minimizing path $P$ in $\tilde{\mathscr{G}}$ with $w_P\leq k$ which also fails the condition on its last vertex. Hence, the first two summands of the right-hand side of \eqref{eq:truncmombound} can be bounded from above by 
\smash{$\sum_{n=1}^\Delta \P_{\x}(\tilde{A}^{(\x)}_n)$ and $\sum_{n=1}^\Delta \P_{\y}(\tilde{A}^{(\y)}_n)$}.
\bigskip\\
To bound  $\P_{\x}(\tilde{A}^{(\x)}_n)$, we count the expected number of paths occurring in the event \smash{$\tilde{A}^{(\x)}_n$}. Note that if $\smash{\abs{x-y}^d\leq \kappa^{1/\delta}(t\wedge s)^{-\gamma} (t\vee s)^{-\gamma/\delta}}$ holds for vertices $\x$ and $\y$, there exist no step minimizing paths between $\x$ and $\y$ with conductance larger or equal three and there exists one step minimizing path with conductance two, since there exists an edge of conductance two between the two vertices. This property also holds for any of the subclasses of step minimizing paths introduced in the following.
\bigskip\pagebreak[3]

For given vertices $\x=(x,t)$ and $\y=(y,s)$ define the random variable $N(\x,\y,n)$ \label{Ndef}
as the number of distinct step minimizing paths $P$ between $\x$ and $\y$ with $w_P=n$, whose connecting vertices $(x_1,t_1),\ldots$ 
$(x_{m-1},t_{m-1})$ all have a larger mark than $\y$ and fulfill 
\smash{$t\geq \ell_0, t_1\geq \ell_{w_P(1)},\ldots, t_{m-1} \geq \ell_{w_P(m-1)}$}. 
As $\tilde{A}^{_{(\x)}}_n$ is the event that there exists a path with conductance $n$, where the final vertex is the first and only one which has a mark smaller than the corresponding $\ell_n$, the final vertex is also the most powerful vertex of the path. Hence, the number of paths described by the event $\tilde{A}^{(\x)}_n$ can be written as the sum of $N(\x,\y,n)$ over all sufficiently powerful vertices $\y$ of the graph and, by Mecke's formula, we have
\begin{equation}
\P_{\x}(\tilde{A}^{(\x)}_n) \leq \int\limits_{\mathbb{R}^d \times (0,\ell_n]} \mathd \y \, \E_{\x,\y} N(\x,\y,n).
\end{equation}
We now decompose $N(\x,\y,n)$. For $k=1,\ldots, n-1$, define $N(\x,\y,n,k)$ as the number of step minimizing paths $P$ between $\x$ and $\y$ with $w_P=n$~and\smallskip
\begin{itemize}
\item whose connecting vertices $(x_1,t_1),\ldots (x_{m-1},t_{m-1})$ have marks larger than the corresponding 
thresholds $\ell_{w_P(1)}, \ldots, \ell_{w_P(m-1)}$ and larger than the mark of $\y$, and\smallskip
\item there exists $r \in \set{1,\ldots,m-1}$ such that we have $w_P(r) = n-k$ and the connecting vertex $\x_{r} = (x_{r},t_{r})$ has the smallest mark among the connecting vertices and $\x$.\smallskip
\end{itemize}
The vertex $\x_{r}$ can be understood as the 
 powerful vertex of the path which connects to~$\y$ via a path of less powerful vertices with conductance $k$. Consequently, we write $N(\x,\y,n,n)$ for the number of step minimizing paths of conductance $n$, which connect $\x$ and $\y$ via less powerful vertices. Then we have, for $n\in\mathbb{N}$,
\begin{equation}
 N(\x,\y,n) \leq \sum_{k=1}^{n} N(\x,\y,n,k). \label{eq:recdecomp}
\end{equation}
For $k=1,\ldots,n-1$, the existence of a path counted in $N(\x,\y,n,k)$ implies the existence of a vertex $\z$ such that a step minimizing path counted by $N(\x,\z,n-k)$ exists which connects to $\y$ via a path of less powerful vertices with conductance $k$. Hence
\begin{equation}
N(\x,\y,n,k) \leq \sum_{{\z=(z,u)}\atop{\ell_{n-k}\vee s<u\leq t}}N(\x,\z,n-k)\, K(\z,\y,k), \label{eq:recdecomp2}
\end{equation}
for $n\in \mathbb{N}$ and $k=1,\ldots, n-1$, where we denote by $K(\z,\y,k)$ the number of step minimizing paths $P$ between $\z$ and $\y$ with $w_P = k$ whose vertices have marks larger than the marks of $\z$ and $\y$. Note that unlike $N(\x,\y,n)$, this random variable is symmetric in its first two arguments and {by definition we have that $N(\x,\y,n,n) = K(\x,\y,n)$.} Observe that {$K(\z,\y,1)$} is the indicator whether {$\z$} and $\y$ are connected by an edge with conductance one. We turn our attention to {$K(\z,\y,k)$} in the case $k\geq 2$, i.e.\ two powerful vertices are connected via one or more connectors or an edge with conductance~two.%

\paragraph{Connecting powerful vertices.}
First consider the random variable $K(\x,\y,2)$. If $\smash{\abs{x-y}^d\leq \kappa^{1/\delta}(t\wedge s)^{-\gamma} (t\vee s)^{-\gamma/\delta}}$, the vertices $\x$ and $\y$ are connected by an edge with conductance two and we infer that $K(\x,\y,2) = 1$. In the other case, $K(\x,\y,2)$ is equal to the number of connectors between $\x$ and $\y$, i.e the number of vertices with mark larger than the marks of $\x$ and $\y$, which form an edge of conductance one to $\x$ and $\y$. The following lemma shows the stated inequality \eqref{eq:twoconn_ass} from Section \ref{subsec:outline} for this case. Recall that we write $\rho(x) := 1\wedge x^{-\delta}$ and define $I_\rho := \int_{\mathbb{R}^d}\mathd x \rho(\kappa^{-1/\delta}\abs{x}^d)$.

\begin{lemma}[Two-connection lemma]\label{twoconnlem}
Let $\x = (x,t), \y = (y,s)$ be two given vertices with $\smash{\abs{x-y}^d > \kappa^{1/\delta}(t\wedge s)^{-\gamma} (t\vee s)^{-\gamma/\delta}}$ . Then
\begin{align*}
\E_{\mathbf{x},\mathbf{y}}K(\x,\y,2) &\leq \int_{\mathbb{R}^d\times (t\vee s,1]} \mathd \z\rho \big(\kappa^{-1/\delta} t^{\gamma} u^{1-\gamma} \abs{x-z}^d \big)\rho \big(\kappa^{-1/\delta} s^{\gamma} u^{1-\gamma} \abs{y-z}^d \big)\\
&\leq C \kappa (t\wedge s)^{-\gamma\delta}(t\vee s)^{-\gamma}\abs{x-y}^{-d \delta}
\end{align*}
where $C = \tfrac{I_\rho 2^{d \delta + 1}}{(\gamma - (1-\gamma)\delta)}$.
\end{lemma}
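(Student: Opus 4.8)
The plan is to prove the two displayed inequalities separately. The first inequality is essentially a bookkeeping step: the random variable $K(\x,\y,2)$ counts connectors, i.e.\ points $\z=(z,u)$ of the Poisson process with $u > t\vee s$ that form an edge of conductance one to both $\x$ and $\y$. By Mecke's equation, $\E_{\x,\y}K(\x,\y,2)$ equals the integral over $\mathbb{R}^d\times(t\vee s,1]$ of the probability that such a $\z$ is connected to both $\x$ and $\y$. Applying Assumption~\ref{ass:main1} to the pair set $I=\{(\x,\z),(\y,\z)\}$ (in which $\z$ appears twice and $\x,\y$ once each, so the hypothesis applies) gives the product bound $\kappa\, t^\gamma_{\phantom{|}}$-type factors; writing this out with $\rho(x)=1\wedge x^{-\delta}$ and recalling that on the relevant range $t\wedge s = t\wedge s$ etc., we get exactly the first line. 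Since $u>t\vee s\geq t\wedge s$, the min of the marks with $u$ is always $t$ or $s$, so no case distinction on $u$ is needed.

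The substantive part is the second inequality: bounding $\int_{\mathbb{R}^d\times(t\vee s,1]} \de\z\, \rho(\kappa^{-1/\delta} t^\gamma u^{1-\gamma}|x-z|^d)\,\rho(\kappa^{-1/\delta} s^\gamma u^{1-\gamma}|y-z|^d)$ by $C\kappa (t\wedge s)^{-\gamma\delta}(t\vee s)^{-\gamma}|x-y|^{-d\delta}$. First I would do the spatial ($z$) integral for fixed $u$. Split $\mathbb{R}^d$ into the region where $|x-z|\geq\tfrac12|x-y|$ and the region where $|y-z|\geq\tfrac12|x-y|$ (these cover $\mathbb{R}^d$ by the triangle inequality). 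On the first region, bound the first $\rho$-factor by $(\kappa^{-1/\delta} t^\gamma u^{1-\gamma}(\tfrac12|x-y|)^d)^{-\delta} = \kappa\, t^{-\gamma\delta} u^{-(1-\gamma)\delta} 2^{d\delta}|x-y|^{-d\delta}$ (using the hypothesis $|x-y|^d>\kappa^{1/\delta}(t\wedge s)^{-\gamma}(t\vee s)^{-\gamma/\delta}$ to ensure the argument exceeds $1$ so that $\rho$ truly equals that power), pull it out of the $z$-integral, and integrate the remaining factor $\rho(\kappa^{-1/\delta} s^\gamma u^{1-\gamma}|y-z|^d)$ over all of $\mathbb{R}^d$; by the substitution $w = s^{\gamma/d} u^{(1-\gamma)/d} z$ this integral equals $s^{-\gamma} u^{-(1-\gamma)} I_\rho$. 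The symmetric estimate handles the second region. Adding the two contributions gives, for fixed $u$, a bound of the form $I_\rho\, 2^{d\delta}\,\kappa\,|x-y|^{-d\delta}\big( t^{-\gamma\delta} s^{-\gamma} u^{-(1-\gamma)\delta-(1-\gamma)} + s^{-\gamma\delta} t^{-\gamma} u^{-(1-\gamma)\delta-(1-\gamma)}\big)$.

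Finally I integrate over $u\in(t\vee s,1]$. The $u$-exponent is $-(1-\gamma)(\delta+1)$, and the condition $\gamma>\tfrac{\delta}{\delta+1}$ (equivalently $(1-\gamma)(\delta+1)<1$, i.e.\ $\gamma>(1-\gamma)\delta$, which is exactly the sign condition $\gamma-(1-\gamma)\delta>0$ appearing in the constant $C$) guarantees the exponent is in $(-1,0)$, so $\int_{t\vee s}^1 u^{-(1-\gamma)(\delta+1)}\,\de u \leq \frac{1}{\gamma-(1-\gamma)\delta}(t\vee s)^{1-(1-\gamma)(\delta+1)} = \frac{1}{\gamma-(1-\gamma)\delta}(t\vee s)^{\gamma-(1-\gamma)\delta}$. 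Multiplying through: in the first term $t^{-\gamma\delta}s^{-\gamma}(t\vee s)^{\gamma-(1-\gamma)\delta}$, and in the second $s^{-\gamma\delta}t^{-\gamma}(t\vee s)^{\gamma-(1-\gamma)\delta}$; in each case, writing $t\vee s$ and $t\wedge s$ appropriately one checks the product is at most $(t\wedge s)^{-\gamma\delta}(t\vee s)^{-\gamma}$ (e.g.\ if $t\leq s$ the first term is $t^{-\gamma\delta}s^{-\gamma}s^{\gamma-(1-\gamma)\delta}=t^{-\gamma\delta}s^{-(1-\gamma)\delta}\leq t^{-\gamma\delta}s^{-\gamma}$ since $(1-\gamma)\delta<\gamma$; the other cases are analogous). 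Collecting constants yields the factor $C=\frac{I_\rho 2^{d\delta+1}}{\gamma-(1-\gamma)\delta}$ (the extra $2$ absorbing the two symmetric terms), completing the proof.

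The main obstacle is the bookkeeping in the last step: keeping track of which of $t,s$ is the min and verifying that in every case the mark-dependent prefactor collapses to $(t\wedge s)^{-\gamma\delta}(t\vee s)^{-\gamma}$; this is where the hypothesis $\gamma>\frac{\delta}{\delta+1}$ enters twice (once to make the $u$-integral finite, once to dominate the exponents), so one must be careful that the same inequality is doing both jobs. The spatial integral itself is routine once the region split and the scaling substitution are set up, but one must not forget to invoke the standing assumption $|x-y|^d>\kappa^{1/\delta}(t\wedge s)^{-\gamma}(t\vee s)^{-\gamma/\delta}$ to legitimately replace $\rho$ by its power-law branch.
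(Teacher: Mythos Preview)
Your overall strategy matches the paper's exactly: split $\mathbb{R}^d$ according to whether $|x-z|\geq\tfrac12|x-y|$ or $|y-z|\geq\tfrac12|x-y|$, bound one $\rho$-factor by its power on each piece, integrate the other over $\mathbb{R}^d$ via scaling to produce $I_\rho$, then integrate in $u$. However, there is a genuine error in your $u$-integration. You write
\[
\int_{t\vee s}^1 u^{-(1-\gamma)(\delta+1)}\,\mathd u \;\leq\; \frac{1}{\gamma-(1-\gamma)\delta}\,(t\vee s)^{\gamma-(1-\gamma)\delta},
\]
but the exponent $\gamma-(1-\gamma)\delta$ is \emph{positive} in the regime $\gamma>\tfrac{\delta}{\delta+1}$, so the antiderivative $u^{\gamma-(1-\gamma)\delta}/(\gamma-(1-\gamma)\delta)$ is increasing and the upper limit $u=1$ dominates. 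The integral equals $\frac{1-(t\vee s)^{\gamma-(1-\gamma)\delta}}{\gamma-(1-\gamma)\delta}$, which for small $t\vee s$ is close to $\frac{1}{\gamma-(1-\gamma)\delta}$ and certainly \emph{not} bounded by the smaller quantity you wrote (which tends to $0$). The correct bound is simply $\int_{t\vee s}^1 u^{-(1-\gamma)(\delta+1)}\,\mathd u \leq \frac{1}{\gamma-(1-\gamma)\delta}$. With this, the remaining step is just $t^{-\gamma\delta}s^{-\gamma}+s^{-\gamma\delta}t^{-\gamma}\leq 2(t\wedge s)^{-\gamma\delta}(t\vee s)^{-\gamma}$, which follows from $\delta>1$; no extra $(t\vee s)$-factor needs to be absorbed, and your subsequent case analysis (which is left incomplete) is unnecessary.

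A minor side remark: the standing hypothesis on $|x-y|$ is not what justifies replacing $\rho$ by its power-law branch. The inequality $\rho(a)=1\wedge a^{-\delta}\leq a^{-\delta}$ holds for all $a>0$, so no lower bound on the argument is needed; the paper uses this without comment. The hypothesis on $|x-y|$ is only there so that $K(\x,\y,2)$ genuinely counts connectors (i.e., for the first inequality).
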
\pagebreak[3]

\begin{proof}
The first inequality follows directly by summing over all possible connectors and applying Assumption \ref{ass:main1} and Mecke's formula. 
Observe that for every vertex $\z=(z,u)$ either $\abs{x-z}\geq \frac{\abs{x-y}}{2}$ or $\abs{y-z}\geq \frac{\abs{x-y}}{2}$, as the open sets $B_{\frac{\abs{x-y}}{2}}(x)$ and $B_{\frac{\abs{x-y}}{2}}(y)$ are disjoint. Hence, we have 
\begin{align*}
&\int_{\mathbb{R}^d\times (t\vee s,1]} \mathd \z\rho \big(\kappa^{-1/\delta} t^{\gamma} u^{1-\gamma} \abs{x-z}^d \big)\rho \big(\kappa^{-1/\delta} s^{\gamma} u^{1-\gamma} \abs{y-z}^d \big)\\
&\leq \,\,\int\limits_{t\vee s}^1 \mathd u \rho \big(2^{-d}\kappa^{-1/\delta} t^{\gamma} u^{1-\gamma} \abs{x-y}^d \big) \int\limits_{\mathbb{R}^d} \mathd z \rho \big(\kappa^{-1/\delta} s^{\gamma} u^{1-\gamma} \abs{y-z}^d \big)\\
& \qquad + \int\limits_{t\vee s}^1 \mathd u \rho \big(2^{-d}\kappa^{-1/\delta} s^{\gamma} u^{1-\gamma} \abs{x-y}^d \big)\int\limits_{\mathbb{R}^d}\mathd z\rho \big(\kappa^{-1/\delta} t^{\gamma} u^{1-\gamma} \abs{x-z}^d \big)\\
&\leq  \,\, I_\rho 2^{d\delta}\kappa \bigg[\int\limits_{t\vee s}^1 \mathd u\ t^{-\gamma\delta} u^{(\gamma-1)\delta} \abs{x-y}^{-d \delta} s^{-\gamma} u^{\gamma-1} \\
& \phantom{lamalamalamadingdong} +s^{-\gamma \delta} u^{(\gamma-1)\delta} \abs{x-y}^{-d \delta} t^{-\gamma} u^{\gamma-1} \bigg]\\
&\leq  \,\,\tfrac{I_\rho 2^{d\delta}}{(\gamma - (1-\gamma)\delta)}\kappa \abs{x-y}^{-d \delta}\left[t^{-\gamma\delta}s^{-\gamma} + s^{-\gamma\delta}t^{-\gamma}\right]\\
&\leq  \,\,\tfrac{I_\rho 2^{d\delta+1}}{\gamma - (1-\gamma)\delta}\, \kappa (t\wedge s)^{-\gamma\delta}(t\vee s)^{-\gamma}\abs{x-y}^{-d \delta}.
\end{align*}\ \\[-8mm]
\end{proof}
\bigskip

We consider the event that  vertices $\x$ and $\y$ are
connected by multiple vertices with larger marks. Recall that $K(\x,\y,k)$ is the number of step minimizing paths $P$ between $\x$ and $\y$ with $w_P = k$ whose vertices have marks larger than the marks of $\x$ and $\y$. As before we call the vertices of such a path {connectors}. To control the number of such paths,
notice that for any possible choice of connectors between $\x$ and $\y$, there exists an almost surely unique connector with smallest mark, i.e the most powerful connector. For $i=1,\ldots,k$, we denote by $K(\x,\y,k,i)$ the number of step minimizing paths between $\x$ and $\y$ where the connectors have a larger mark than $\x$ and~$\y$ and there is a vertex $\x_r$ with $w_P(r) = i$ which is the most powerful connector of those vertices. Then, 
\[
K(\x,\y,k) \leq \sum_{i=1}^{k-1} K(\x,\y,k,i).
\]
Assume now that the connector $\x_r$ is the most powerful of all connectors and $w_P(r)=i$. In this case, the possible connectors $\x_1,\ldots, \x_{r-1}$ and $\x_{r+1},\ldots, \x_{m-1}$ need to have larger mark than $\x_r$. Hence, the paths between $\x_r$ and $\x$, resp. $\y$, considered on their own have the same structure as the initial path and 
this leads to
\begin{equation}
K(\x,\y,k)\leq \sum_{i=1}^{k-1} \sum_{\substack{\z = (z,u)\\ u> t\vee s}} K(\x,\z,i)K(\z,\y,k-i). \label{kconn_recrep_rv}
\end{equation}
\begin{figure}[h]
\begin{center}
\begin{tikzpicture}[scale=0.4, every node/.style={scale=0.7}]
\draw[dashed] (-1,13.5) node[left] {$1$} -- (21, 13.5);
\draw[dashed] (-1,5.5) node[left] {$\ell$} -- (21, 5.5);
\node (A) at (0,6)[circle, fill = black ,label={left:$\mathbf{x}$}] {};
\node (B) at (2,12)[circle, fill = gray, label={}] {};
\node (C) at (4,10) [circle, fill = gray, label={}] {};
\node (D) at (6,6.5)[circle,  fill= red, label = {}] {};
\node (F) at (8,8) [circle, fill= gray, label={}] {};
\node (G) at (10,12)[circle, fill = gray, label={}] {};
\node (H) at (12,10)[circle,  fill= gray, label={}] {};
\node (I) at (14,11.5) [circle, fill = gray, label={}] {};
\node (K) at (16,9)[circle, fill = gray, label= {}] {};
\node (L) at (18,12.5) [circle,  fill= gray, label= {}] {};
\node (M) at (20,3)[circle, fill = black, label={left:$\mathbf{y}$}]{};
\draw[dashed] (A) to (B) to (C) to (D);
\draw (D) to (F) to (G) to (H) to (I) to (K) to (L) to (M);
\end{tikzpicture}
\end{center}
\caption{Decomposing the path at the most powerful connector.}
\end{figure}
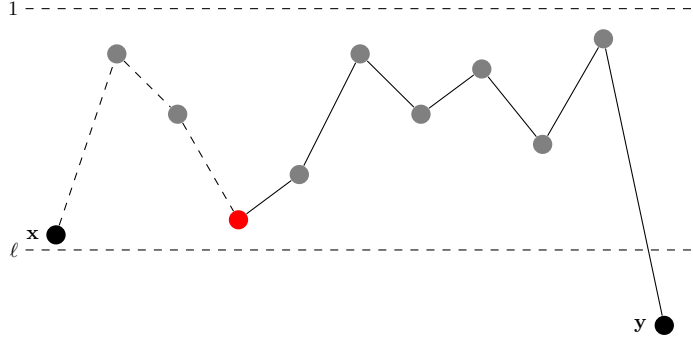\\
We use this decomposition together with Assumption \ref{ass:main1} to find an upper bound for $\E_{\x,\y} K(\x,\y,k)$. Recall that, if $\smash{\abs{x-y}^d\leq \kappa^{1/\delta}(t\wedge s)^{-\gamma} (t\vee s)^{-\gamma/\delta}}$, we have $K(\x,\y,k) = 0$ if $k\geq 3$ and $K(\x,\y,k) = 1$ if $k= 2$ by definition. We now introduce a mapping 
\[
e_K \colon (\mathbb{R}^d\times (0,1])^2 \times \mathbb{N} \to [0,\infty).
\]
by
$e_K(\x,\y,1) = \rho(\kappa^{-1/\delta}(t\wedge s)^{\gamma}(t\vee s)^{1-\gamma}\abs{x-y}^d),$ for $\x,\y \in \mathbb{R}^d\times (0,1],$
and, for $k\geq 2$ under the assumption that  $\smash{\abs{x-y}^d > \kappa^{1/\delta}(t\wedge s)^{-\gamma} (t\vee s)^{-\gamma/\delta}}$,
\begin{equation}
e_K(\x,\y,k) = \sum_{i=1}^{k-1}\int\limits_{\mathbb{R}^d\times (t\vee s,1]} \mathd \z \, e_K(\x,\z,i)e_K(\z,\y,k-i), \quad \text{for}\ \x,\y \in \mathbb{R}^d\times (0,1], \label{eq:e_def_rec}
\end{equation}
and otherwise $e_K(\x,\y,2) = 1$ and $e_K(\x,\y,k) = 0$ for $k\geq 3$. 
\pagebreak[3]

\begin{lemma}\label{lem:kconn_number_determ_bound}
Let $\x, \y \in \mathbb{R}^d\times (0,1]$ be two given vertices. Then, for all $k\in \mathbb{N}$, we have
\begin{equation}
\E_{\x,\y} K(\x,\y,k) \leq e_K(\x,\y,k).
\end{equation}
\end{lemma}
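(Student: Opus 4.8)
The plan is to prove the inequality by induction on $k$, with the inductive step built on the decomposition~\eqref{kconn_recrep_rv} of a step minimizing path at its most powerful connector. A preliminary fact I would isolate first: writing $\rho_e:=\rho\big(\kappa^{-1/\delta}(t_e\wedge s_e)^{\gamma}(t_e\vee s_e)^{1-\gamma}\abs{u_e-v_e}^d\big)$ for the factor attached to an edge $e$ between vertices of marks $t_e,s_e$ and locations $u_e,v_e$, the probability that \emph{all} edges of a given self-avoiding path belong to $\mathscr{G}$ is at most $\prod_e\rho_e$. Indeed, each vertex is incident to at most two edges of the path, so Assumption~\ref{ass:main1} bounds this probability by $\prod_e\kappa(t_e\wedge s_e)^{-\gamma\delta}(t_e\vee s_e)^{(\gamma-1)\delta}\abs{u_e-v_e}^{-\delta d}$, and restricting to the sub-collection of edges whose factor is strictly below one upgrades each factor to its $1\wedge(\cdot)$-truncation, exactly as in the first inequality of Lemma~\ref{twoconnlem}.

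For the base cases I would argue directly. When $k=1$, $K(\x,\y,1)$ is the indicator that $\x$ and $\y$ are joined by a conductance-one edge, so $\E_{\x,\y}K(\x,\y,1)=\P_{\x,\y}\{\x\sim\y\}\le e_K(\x,\y,1)$ by Assumption~\ref{ass:main1}. When $k=2$ and $\abs{x-y}^d\le\kappa^{1/\delta}(t\wedge s)^{-\gamma}(t\vee s)^{-\gamma/\delta}$ both sides equal $1$, and $K(\x,\y,k)=0=e_K(\x,\y,k)$ for $k\ge3$, by definition. When $k=2$ and $\x,\y$ are far apart, the first inequality of Lemma~\ref{twoconnlem} is exactly the desired bound: every connector $\z=(z,u)$ has $u>t\vee s$, so the integrand there equals $e_K(\x,\z,1)\,e_K(\z,\y,1)$ and the integral is $e_K(\x,\y,2)$.

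For the inductive step, fix $k\ge3$, assume the statement for all smaller conductances and all pairs of vertices, and (the close case being handled as above) take $\x,\y$ far apart. Starting from~\eqref{kconn_recrep_rv} and applying Mecke's formula to the sum over connectors I would obtain
\[
\E_{\x,\y}K(\x,\y,k)\le\sum_{i=1}^{k-1}\int_{\mathbb{R}^d\times(t\vee s,1]}\mathd\z\;\E_{\x,\y,\z}\big[K(\x,\z,i)\,K(\z,\y,k-i)\big],
\]
so, comparing with the recursion~\eqref{eq:e_def_rec}, the lemma comes down to the product estimate
\[
\E_{\x,\y,\z}\big[K(\x,\z,i)\,K(\z,\y,k-i)\big]\le e_K(\x,\z,i)\,e_K(\z,\y,k-i),\qquad u>t\vee s,\ \ 1\le i\le k-1.
\]

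The hard part is this product estimate, since Assumption~\ref{ass:main1} carries no conditional independence and the expectation of the product does not split into a product of expectations. My plan is to use that~\eqref{kconn_recrep_rv} arises from cutting a genuine step minimizing path at its almost surely unique most powerful connector $\z$, so the two resulting pieces share only $\z$; hence $K(\x,\z,i)\,K(\z,\y,k-i)$ may be replaced by the number of \emph{vertex-disjoint} (apart from $\z$) pairs $(P_1,P_2)$ of admissible step minimizing paths, from $\x$ to $\z$ of conductance $i$ and from $\z$ to $\y$ of conductance $k-i$. I would then expand this count by Mecke's formula over the pairwise distinct inner vertices of $P_1$ and $P_2$, discard the step-minimality constraints, and be left with a sum over combinatorial path shapes of integrals of $\P_{\x,\y,\z,\ldots}(\text{all edges of }P_1\text{ and }P_2\text{ lie in }\mathscr{G})$. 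Since $P_1$ and $P_2$ meet only in $\z$, their concatenation is a single self-avoiding path, so the preliminary fact applies to its conductance-one edges and yields a bound $\prod_{e\in P_1}\rho_e\prod_{e\in P_2}\rho_e$, while a conductance-two edge in $P_1$ or $P_2$ merely imposes a proximity constraint on its endpoints that is faithfully recorded by the value $e_K(\cdot,\cdot,2)=1$. Finally, because the coincidence of two inner vertices is Lebesgue-null the distinctness constraint may be dropped for free, the integrals over the inner vertices of $P_1$ and of $P_2$ decouple, and summing over path shapes reproduces precisely $e_K(\x,\z,i)\,e_K(\z,\y,k-i)$ as obtained by unfolding~\eqref{eq:e_def_rec} (the induction hypothesis handling the sub-paths of conductance $<k$). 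I expect the main technical burden to be the bookkeeping in this last step — matching the tree of Mecke expansions to the recursive definition of $e_K$ and checking that all mark ranges line up — but no genuinely new idea beyond the disjointness observation and the preliminary fact should be needed.
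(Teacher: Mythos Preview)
Your overall plan---expand both factors fully down to edges, apply Assumption~\ref{ass:main1} to the concatenated self-avoiding path, then use Mecke and decouple---is the strategy the paper follows; the paper organises the expansion through coloured binary trees, which is the bookkeeping you anticipate at the end. However, there is a genuine gap at the step where you ``discard the step-minimality constraints''.

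The recursion~\eqref{eq:e_def_rec} for $e_K$ applies only when $\abs{x-y}^d > \kappa^{1/\delta}(t\wedge s)^{-\gamma}(t\vee s)^{-\gamma/\delta}$; for a close pair one has $e_K(\cdot,\cdot,2)=1$ and $e_K(\cdot,\cdot,j)=0$ for $j\ge3$. Unfolding $e_K$ therefore introduces, at every split, an indicator that the two endpoints currently being split are \emph{far apart}. These are exactly the constraints indexed by $I_T^{\mathrm b}$ in the paper's $e_K^T$, and they are essential for the identity $\sum_T e_K^T=e_K$. On the $K$ side the matching information comes from step-minimality: a step minimizing path cannot bypass an available conductance-two edge, so whenever two non-adjacent path vertices are close, the path would have used that edge instead. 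If you discard step-minimality wholesale, your fully expanded bound picks up positive mass from configurations in which some intermediate pair $(\x_a,\x_b)$ is close---so the recursion for $e_K$ would have stopped there---yet your integral still ranges over further connectors between them; for instance, for a sub-path of conductance $i\ge3$ between two close vertices your ``sum over path shapes'' is strictly positive while $e_K(\cdot,\cdot,i)=0$. Consequently your upper bound is strictly larger than $e_K(\x,\z,i)\,e_K(\z,\y,k-i)$ and cannot ``reproduce precisely'' it. The remedy is to retain, at every splitting level, the far-apart indicator for the pair being split; this is not mere bookkeeping but the mechanism that forces the expansion to coincide with the unfolded $e_K$, and it is precisely what the coloured-tree apparatus in the paper's proof encodes.
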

\pagebreak[3]

Note that by Assumption~\ref{ass:main1} and Lemma \ref{twoconnlem}, we have $\E_{\x,\y}K(\x,\y,1)\leq e_K(\x,\y,1)$ and $\E_{\x,\y}K(\x,\y,2)\leq e_K(\x,\y,2)$. We prove the result for general $k$ by induction using \eqref{kconn_recrep_rv}, but to do so we need to 
classify the possible connection strategies according to the way in which powerful vertices are placed. This classification is done by means of coloured binary trees. We write $\mathcal{T}_{k-1}$ for the set of all binary trees with $k-1$ vertices. Here a binary tree is a rooted tree in which every vertex can have either no child, a right child, a left child or both. We colour the vertices of a tree $T\in \mathcal{T}_{k-1}$ in such a way that the leaves of the tree can be either blue or red, and every other vertex is coloured blue. Thus, for each $T\in \mathcal{T}_{k-1}$ there exist $2^{\ell}$ different colourings, where $\ell$ is the number of leaves of $T$. Let $\mathcal{T}_{k-1}^c$ be the set of all coloured trees. \bigskip

Before proceeding we outline the role of the tree and its coloured vertices in regard to the information they capture. We will construct the tree so as to describe the precise order of the connectors' marks. In order to distuingish between connections of vertices that are sufficiently close to form an edge with conductance two and connections between vertices which are further apart, red vertices of the tree will represent the first case and blue the second.\pagebreak[3]\bigskip

\begin{subfigures}
\begin{figure}[!t]
\begin{center}
\begin{tikzpicture}[scale=0.35, every node/.style={scale=0.6}]
\node (A) at (0,5.6)[circle, draw,fill = black ,label={left:$\mathbf{x}$}] {};
\node (B) at (2,12)[circle, draw,fill = gray, label={$\x_1$}] {};
\node (C) at (4,10) [circle, draw,fill = gray, label={$\x_2$}] {};
\node (D) at (6,6.5)[circle,  draw,fill= gray, label = {left:$\x_3$}] {};
\node (F) at (8,8) [circle, draw,fill= gray, label={left:$\x_4$}] {};
\node (G) at (10,12)[circle, draw,fill = gray, label={$\x_5$}] {};
\node (H) at (12,10)[circle,  draw,fill= gray, label={$\x_6$}] {};
\node (I) at (14,11.5) [circle, draw,fill = gray, label={$\x_7$}] {};
\node (K) at (16,9)[circle, draw,fill = gray, label= {left:$\x_8$}] {};
\node (L) at (18,12.5) [circle,  draw,fill= gray, label= {$\x_9$}] {};
\node (M) at (20,3)[circle, draw,fill = black, label={left:$\mathbf{y}$}]{};
\draw (A) to (B) to (C) to (D);
\draw (D) to (F) to (G) to (H) to (I) to (K) to (L) to (M);
\draw[dashed] (0.6,6) rectangle (19.5,14.5);
\draw[dotted] (8.6,8.5) rectangle (19,14);
\end{tikzpicture}
\qquad
\begin{tikzpicture}[scale=0.38, every node/.style={scale=0.65}]
\node (B) at (1,6)[circle, draw,fill = cyan, label={$\x_1$}] {};
\node (C) at (3,4) [circle, draw,fill = cyan, label={$\x_2$}] {};
\node (D) at (6,2)[circle,  draw,fill= cyan, label = {$\x_3$}] {};
\node (F) at (9,4) [circle, draw,fill= cyan, label={$\x_4$}] {};
\node (G) at (9,10)[circle, draw,fill = cyan, label={$\x_5$}] {};
\node (H) at (10,8)[circle,  draw,fill= cyan, label={left:$\x_6$}] {};
\node (I) at (11,10) [circle, draw,fill = cyan, label={$\x_7$}] {};
\node (K) at (11,6)[circle, draw,fill = cyan, label= {left:$\x_8$}] {};
\node (L) at (12,8) [circle, draw, fill= cyan, label= {$\x_9$}] {};
\draw (B) to (C) to (D);
\draw (D) to (F) to (K) to (H) to (G);
\draw (H) to (I);
\draw (K) to (L);
\draw[dashed] (0,1.5) rectangle (13,12.5);
\draw[dotted] (7.8,5.5) rectangle (12.7,12);
\end{tikzpicture}
\end{center}
\caption{Classification of a connection strategy by means of
a binary tree. Local minima of the path correspond to branchpoints and local maxima to blue leaves of the corresponding binary tree $T$. Matching labels in the tree on the right are obtained by left-to-right labelling.}\label{fig:dec_order-tree1}
\begin{center}
\begin{tikzpicture}[scale=0.35, every node/.style={scale=0.6}]
\node (A) at (0,5.6)[circle, draw,fill = black ,label={left:$\mathbf{x}$}] {};
\node (B) at (2,12)[circle, draw,fill = gray, label={$\x_1$}] {};
\node (C) at (4,10) [circle, draw,fill = gray, label={$\x_2$}] {};
\node (D) at (6,6.5)[circle,  draw,fill= gray, label = {left:$\x_3$}] {};
\node (F) at (8,8) [circle, draw,fill= gray, label={left:$\x_4$}] {};
\node (G) at (10,12)[circle, draw,fill = gray, label={$\x_5$}] {};
\node (H) at (12,10)[circle,  draw,fill= gray, label={$\x_6$}] {};
\node (K) at (16,9)[circle, draw,fill = gray, label= {$\x_7$}] {};
\node (L) at (18,12.5) [circle,  draw,fill= gray, label= {$\x_8$}] {};
\node (M) at (20,3)[circle, draw,fill = black, label={left:$\mathbf{y}$}]{};
\draw (A) to (B) to (C) to (D);
\draw (D) to (F) to (G) to (H);
\draw[very thick] (H) to (K);
\draw (K) to (L) to (M);
\draw[dashed] (0.6,6) rectangle (19.5,14.5);
\draw[dotted] (8.6,8.5) rectangle (19,14);
\end{tikzpicture}
\qquad
\begin{tikzpicture}[scale=0.38, every node/.style={scale=0.65}]
\node (B) at (1,6)[circle, draw,fill = cyan, label={$\x_1$}] {};
\node (C) at (3,4) [circle, draw,fill = cyan, label={$\x_2$}] {};
\node (D) at (6,2)[circle,  draw,fill= cyan, label = {$\x_3$}] {};
\node (F) at (9,4) [circle, draw,fill= cyan, label={$\x_4$}] {};
\node (G) at (9,10)[circle, draw,fill = cyan, label={$\x_5$}] {};
\node (H) at (10,8)[circle,  draw,fill= cyan, label={left:$\x_6$}] {};
\node (I) at (11,10) [circle, draw,fill = red] {};
\node (K) at (11,6)[circle, draw,fill = cyan, label= {left:$\x_7$}] {};
\node (L) at (12,8) [circle, draw, fill= cyan, label= {$\x_8$}] {};
\draw (B) to (C) to (D);
\draw (D) to (F) to (K) to (H) to (G);
\draw (H) to (I);
\draw (K) to (L);
\draw[dashed] (0,1.5) rectangle (13,12.5);
\draw[dotted] (7.8,5.5) rectangle (12.7,12);
\end{tikzpicture}
\end{center}
\caption{One connector of the path in Figure \ref{fig:dec_order-tree1} is replaced by an edge of conductance two. This edge corresponds to the red vertex in the tree to which no label and hence no vertex of the path is attached.}
\end{figure}
\end{subfigures}

To each step minimizing path of conductance $k$ between $\x$ and $\y$ we associate a coloured tree $T\in \mathcal{T}_{k-1}^c$ in two steps, 
see Figure~\ref{fig:dec_order-tree1}:\smallskip

\begin{itemize}
\item[(1)]If the connectors of the step minimizing path $P$ of conductance $k$ are $\x_1,\ldots,\x_m$ with $m\leq k-1$, we associate a vector $\mathbf{u} = (u_1,\ldots, u_{k-1})$ to the path defined as follows. We set $u_{w_P(i)} := t_i$ for all $i\in {1,\ldots,m}$ and $u_j = 1$ for all $j\in \set{1,\ldots,k-1}\backslash \set{w_P(1),\ldots,w_P(m)}$. Then 
\[
\mathbf{u} \in\mathcal{U}_{k-1} := \set{\mathbf{u} = (u_1,\ldots,u_{k-1}) \in (0,1]^{k-1} \colon u_i \not= 1\ \text{if}\ u_{i-1}=1}.
\]
\item[(2)] To $\mathbf{u}\in\mathcal{U}_{k-1}$ we associate a coloured tree $T \in \mathcal{T}_{k-1}^c$  
as follows:\smallskip
\begin{itemize}
\item For $k=2$ we have $\mathbf{u} = (u_1)$ and the set $\mathcal{T}_1^c$ contains two trees $T$, each consisting only of the root which may be coloured blue or red. If $\mathbf{u} = (1)$, then $\mathbf{u}$ is associated to the tree $T$ with the red root and otherwise $\mathbf{u}$ is associated to the tree with the blue root.
\smallskip
\item For $k>2$, assume that to every tuple in  $\mathbf{u}\in\mathcal{U}_{j-1}$ with $2\leq j<k$ we have already associated a coloured tree $T\in \mathcal{T}_{j-1}^c$. Let $\mathbf{u} = (u_1,\ldots,u_{k-1})$ and let $u_i$ be the smallest value of $\mathbf{u}$. Then, there exist trees $T_1\in \mathcal{T}_{i-1}^c$ and $T_2\in \mathcal{T}_{k-i-1}^c$ associated to $\mathbf{u}_1 = (u_1,\ldots,u_{i-1})$, resp.\ $\mathbf{u}_2 = (u_{i+1},\ldots, u_{k-1})$. To $\mathbf{u}$ we associate the tree $T\in \mathcal{T}_{k-1}^c$, which has $T_1$ as the left {subtree} of the root and $T_2$ as the right {subtree} and colour the root blue.
\end{itemize}
\end{itemize}
\bigskip

Conversely, given a tree \smash{$T\in \mathcal{T}_{k-1}^c$} {let} $m$ be the number of blue vertices of the tree. We define a labelling
\[
\sigma_T :\set{1,\ldots,m} \to T, i\mapsto \sigma_T(i),
\]
of the blue vertices in $T$ by letting $\sigma_T(i)$ be the $i$th vertex removed in a \emph{left-to-right exploration} of the tree consisting of the blue vertices. This exploration starts with the vertex obtained by starting at the root and going left at any branching until this is no longer possible. Remove this vertex and repeat the procedure unless the removal disconnects a part from the tree or removes the root. If a part is disconnected explore this  part (which is rooted in the right child of the last removed vertex) until it is fully explored and removed, and continue from there
with the remaining tree. If the root is removed while it has a right child, explore the tree rooted in that child until it is fully explored and then stop.
Similarly, define a bijection
\[
\tau_T :\set{1,\ldots, k-1} \to T, i\mapsto \tau_T(i),
\]
by letting $\tau_T(i)$ be the $i$th vertex seen by a left-to-right exploration of \emph{all} vertices on the tree $T$. We also set $\sigma_T^{-1}(\tau_T(0)):= 0$ and $\sigma_T^{-1}(\tau_T(k)) := m+1$. Finally, 
\[
\iota_T:T \to \{0,\ldots,k\}^2, v \mapsto \big(\iota_T^{\ssup{1}}(v),\iota_T^{\ssup{2}}(v)\big)
\]
is defined recursively. For the root $v$ of $T$, we set $\iota_T(v) = (0,k)$. As before, removing $v$ splits $T$ into a left subtree $T_1$ and a right subtree $T_2$. If these trees are nonempty, set $\iota_T(v_1) = \big(\iota_T^{\ssup{1}}(v),\tau_T^{-1}(v)\big)$ for the root $v_1$ of $T_1$, resp. $\iota_T(v_2) = \big(\tau_T^{-1}(v),\iota_T^{\ssup{2}}(v)\big)$ for the root~$v_2$ of $T_2$. Repeat this for the subtrees until $\iota_T(v)$ is defined for all $v\in T$. Thus, for each vertex $v\in T$, its image $\iota_T(v)$ captures\smallskip
\begin{itemize}
\item as its first entry the labelling $\tau_T^{-1}$ of the last vertex seen by a left-to-right exploration before the first vertex of the subtree rooted in $v$ (and set to $0$ if there is no such vertex),
\item as its second entry the labelling $\tau_T^{-1}$ of the first vertex seen by a left-to-right exploration after the last vertex of the subtree rooted {in} $v$ (and set to $k$ if there is no such vertex).\medskip
\end{itemize} 
With these labelings at hand, we now describe 
four restrictions that are satisfied by the marks and locations of the connectors \smash{$\x_1,\ldots, \x_m$} of every step-minimizing path 
connecting {$\x_0= (x_0,t_0)$ and $\x_{m+1}=(x_{m+1},t_{m+1})$} to which the coloured tree $T$ is associated, namely\medskip
\begin{itemize}
\item[(i)]if $\sigma_T(i)$ is the root in $T$, then {$t_i>t_0,t_{m+1}$};\\[-2mm]
\item[(ii)] if $\sigma_T(i)$ is a child of $\sigma_T(j)$ in $T$, then $t_i>t_j$, \\[-2mm]
\item[(iii)] if there is a red leaf $v$ with $i = \sigma_T^{-1}(\tau_T(\iota_T^{\ssup{1}}(v)))$ and $j = \sigma_T^{-1}(\tau_T(\iota_T^{\ssup{2}}(v)))$, then
$$\abs{x_i-x_j}^d \leq  \kappa^{1/\delta}(t_i\wedge t_j)^{-\gamma} (t_i\vee t_j)^{-\gamma/\delta};$$
\item[(iv)] if there is a blue vertex $v$ with $i = \sigma_T^{-1}(\tau_T(\iota_T^{\ssup{1}}(v)))$ and $j = \sigma_T^{-1}(\tau_T(\iota_T^{\ssup{2}}(v)))$, then
$$\abs{x_i-x_j}^d >  \kappa^{1/\delta}(t_i\wedge t_j)^{-\gamma} (t_i\vee t_j)^{-\gamma/\delta}.$$
\smallskip
\end{itemize}
\noindent
Note that whereas (i) and (ii) describe the order of the marks, (iii) and (iv) encode the spatial restrictions on the connectors via the colour of the tree vertices. In (iv), $\x_i$ (resp.~$\x_j$) is the first vertex to the left (resp.\ right) with a smaller mark than \smash{$\x_{\sigma_T^{-1}(v)}$} and the inequality ensures that $\x_i$ and $\x_j$ are far enough apart that no edge with conductance two can exist between them. Conversely, the inequality in (iii) ensures the existence of an edge with conductance two. These conditions motivate the following definitions:
\medskip
\begin{itemize}
\item $M_T$ as the set of vectors $(t_1,\ldots, t_m)\in(0,1)^m$ such that (i), (ii) hold,
\medskip
\item $I^{\mathrm{rl}}_T$ as the set of pairs $(i,j) \in \set{0,\ldots,m+1}^2$ for which a \emph{red leaf} $v$ of {$T$} exists such that $i = \sigma_T^{-1}(\tau_T(\iota_T^{\ssup{1}}(v)))$ and $j = \sigma_T^{-1}(\tau_T(\iota_T^{\ssup{2}}(v)))$,\medskip
\item $I^{\mathrm{b}}_T$ as the set of pairs $(i,j) \in \set{0,\ldots,m+1}^2$ for which a \emph{blue vertex} $v$ of {$T$} exists such that $i = \sigma_T^{-1}(\tau_T(\iota_T^{\ssup{1}}(v)))$ and $j = \sigma_T^{-1}(\tau_T(\iota_T^{\ssup{2}}(v)))$,\medskip
\item and $I^{\mathrm{bc}}_T$ as the set of pairs $(i,i+1) \in \set{0,\ldots,m+1}^2$ for which we have that $\tau_T^{-1}(\sigma_T(i+1)) - \tau_T^{-1}(\sigma_T(i)) = 1$.\medskip
\end{itemize}
{Whereas $M_T$ captures the restrictions on the marks, $I_T^{\mathrm{rl}}$ and $I_T^{b}$ contain the indices to which the the spatial restrictions (iii) and (iv) apply, as for $(i,j)\in I_T^{\mathrm{b}}$ the vertices $\x_i$ and~$\x_j$ cannot be near to each other and for $(i,j)\in I_T^{\mathrm{rl}}$ the vertices $\x_i$ and $\x_j$ have to be that near to each other so that an edge of conductance two exists between them. For each pair $(i,j)\in I_T^{\mathrm{rl}}$ we have $j = i+1$ and $I_T^{\mathrm{rl}}$, $I_T^{\mathrm{bc}}$ form a partition of $\set{(i,i+1):i = 0,\ldots,m}$, because for any $(i,i+1)\in I_T^{\mathrm{bc}}$, there exists an edge of conductance one between the vertices $\x_i$ and $\x_{i+1}$.}\medskip

\begin{proof}[Proof of Lemma \ref{lem:kconn_number_determ_bound}]
For $T\in \mathcal{T}_{k-1}^c$, we define $K_T(\x,\y)$ as the number of step minimizing paths $P$ between $\x$ and $\y$ with $w_P = k$ whose vertices have marks larger than the marks of $\x$ and $\y$ to which $T$ is associated. Then 
\begin{equation*}
\E_{\x,\y} K(\x,\y,k) = \sum_{T\in \mathcal{T}_{k-1}^c} \E_{\x,\y} K_T(\x,\y).
\end{equation*}
If  $k=1$ (or equivalently $T=\emptyset$) we have that $K_T(\x,\y)$ is the indicator of the event that $\x$ and $\y$ are connected by an edge. For $k=2$, if $T$ is the tree consisting of the red root 
 $ K_{T}(\x,\y) = 1\{\abs{x-y}^d \leq \kappa^{1/\delta}(t\wedge s)^{-\gamma} (t\vee s)^{-\gamma/\delta}\}$
 and if $T$ is the tree consisting of the blue root 
\begin{equation*}K_{T}(\x,\y) \leq 1\{\abs{x-y}^d > \kappa^{1/\delta}(t\wedge s)^{-\gamma} (t\vee s)^{-\gamma/\delta}\} \sum_{\substack{\z = (z,u)\\ u> t\vee s}} K_\emptyset(\x,\z)K_\emptyset(\z,\y).\label{finalrootsplit}
\end{equation*}
For $k\geq 3$ we split the tree at the root, i.e.
\begin{equation}
K_T(\x,\y) \leq 1\{\abs{x-y}^d > \kappa^{1/\delta}(t\wedge s)^{-\gamma} (t\vee s)^{-\gamma/\delta}\} \sum_{\substack{\z = (z,u)\\ u> t\vee s}} K_{T_1}(\x,\z)K_{T_2}(\z,\y). \label{rootsplitting}
\end{equation}
where $T_1$ and $T_2$ are the left, resp.\ right,  subtree of $T$ obtained by cutting the root. 
Repeat the step \eqref{rootsplitting} by consecutively splitting the tree at the vertices as seen in the order of a depth first search of the blue vertices in the tree, reducing the product to terms corresponding to empty or single red vertex trees. We get
\begin{align*}
K_T(\x,\y)\leq \sum_{\x_1,\ldots, \x_m} & 1\{(t_1,\ldots, t_m)\in M_T\}\\
& \prod_{(i,j)\in I_T^{\mathrm{b}}} \!\!\! 1\{\abs{x_i-x_j}^d > \kappa^{1/\delta}(t_i\wedge t_j)^{-\gamma}(t_i\vee t_j)^{-\gamma/\delta}\}\\
& \prod_{(i,i+1)\in I_T^{\mathrm{rl}}} \!\!\!\!
K_{v_{(i,i+1)}}(\x_i,\x_{i+1})\!\!\! \prod_{(i,i+1)\in I_T^{\mathrm{bc}}} \!\!\!\! K_{\emptyset}(\x_i,\x_{i+1}) ,
\end{align*}
where $\x_0=\x$, $\x_{m+1}=\y$ and $v_{(i,i+1)}\in T$ is the red leaf associated to $(i,j)$ in the definition of $I_T^{\mathrm{rl}}$. Note that the term $K_{v_{(i,i+1)}}$ contains further spatial restrictions on $\x_i$ and $\x_{i+1}$, ensuring that these vertices are sufficiently close.
Taking expectations {yields}
\begin{align*}
{\E}_{\mathcal{X}}K_T(\x,\y) \leq \sum_{\x_1,\ldots, \x_m}  
& 1\{(t_1,\ldots, t_m)\in M_T\}\\
& \prod_{(i,j)\in I_T^{\mathrm{b}}}
\!\! 1\{\abs{x_i-x_j}^d > \kappa^{1/\delta}(t_i\wedge t_j)^{-\gamma}(t_i\vee t_j)^{-\gamma/\delta}\}\\
&\!\!\! \prod_{(i,i+1)\in I_T^{\mathrm{rl}}} \!\!\!\!
1\{\abs{x_i-x_{i+1}}^d \leq \kappa^{1/\delta}(t_i\wedge t_{i+1})^{-\gamma}(t_i\vee t_{i+1})^{-\gamma/\delta}\}\\
\E_{\mathcal{X}}&\!\!\! \prod_{(i,i+1)\in I_T^{\mathrm{bc}}} \!\!\!\!
1\{\x_i\sim\x_{i+1}\} .
\end{align*}
By Assumption \ref{ass:main1},  we have
\begin{equation}
\E_{\X} \Big[\!\!\! \prod_{(i,i+1)\in I_T^{\mathrm{bc}}} \!\!\!\!
1\{\x_i\sim\x_{i+1}\} \Big] \leq \!\!\! \prod_{(i,i+1)\in I_T^{\mathrm{bc}}}\!\!\!\! e_K(\x_i,\x_{i+1},1). \label{eq:summand_inequality}
\end{equation}
Hence, using the Mecke formula for $m$ points, we get
\begin{align}
{\E}_{\x,\y}K_T(\x,\y)&\leq  \int d\x_1 \cdots\int d\x_m \, 
1\{(t_1,\ldots, t_m)\in M_T\} \notag\\  
&\prod_{(i,j)\in I_T^{\mathrm{b}}}
\!\! 1\{\abs{x_i-x_j}^d > \kappa^{1/\delta}(t_i\wedge t_j)^{-\gamma}(t_i\vee t_j)^{-\gamma/\delta}\}\notag\\
&\!\!\! \prod_{(i,i+1)\in I_T^{\mathrm{rl}}} \!\!\!\!
1\{\abs{x_i-x_{i+1}}^d \leq \kappa^{1/\delta}(t_i\wedge t_{i+1})^{-\gamma}(t_i\vee t_{i+1})^{-\gamma/\delta}\}\notag\\
&\!\!\! \prod_{(i,i+1)\in I_T^{\mathrm{bc}}} \!\!\!\!
e_K(\x_i,\x_{i+1},1). \label{mecke}
\end{align}
What remains to be seen is that when the {right-hand} side in \eqref{mecke} is denoted
$e^T_K(\x, \y)$ and summed over all 
$T\in \mathcal{T}_{k-1}^c$ we obtain $e_K(\x, \y, k)$. This is clearly true when $k=1$
and $k=2$. Otherwise we use \eqref{eq:e_def_rec} to decompose $e_K(\x, \y, k)$. 
By induction,  the factors in this decomposition can be represented as in~\eqref{mecke} and we obtain
\begin{align*}
e_K(\x,\y,k)  =  &1\{\abs{x-y}^d > \kappa^{1/\delta}(t\wedge s)^{-\gamma} (t\vee s)^{-\gamma/\delta}\} \\
&\sum_{i=1}^{k-1} \sum_{T_1\in \mathcal{T}_{i-1}^c} \sum_{T_2\in \mathcal{T}_{k-1-i}^c} \,\,\,
\int\limits_{\mathbb{R}^d\times (t\vee s,1]} \mathd \z  \, e^{T_1}_K(\x, \z)e^{T_2}_K(\z, \y).
\end{align*}
{Writing the terms $e^{T_1}_K(\x, \z)$ and $e^{T_2}_K(\z, \y)$ as in \eqref{mecke}
as integrals over $\x_1, \ldots, \x_{m_1}$
and $\x_{m_1+2}, \ldots, \x_{m}$ we can insert
$\z$ as $\x_{m_1+1}$ and note that the conditions and terms emerging in that integral 
are exactly the same as in \eqref{mecke} for the tree $T$ with $T_1$ and $T_2$ as left and right subtree of the root. Indeed,\medskip
\begin{itemize}
\item the vector $(t_1,\ldots,t_m)$ of the marks of $\x_1,\ldots,\x_m$ is an element of $M_T$ iff \\$(t_1,\ldots,t_{m_1})\in M_{T_1}$, $(t_{m_1+2},\ldots,t_m)\in M_{T_2}$ and $t_{m_1+1} > s\vee t$, \medskip
\item the spatial conditions described by $I_T^{\mathrm{b}}$ are fulfilled iff $x_1,\ldots,x_{m_1}$ fulfills the ones decribed by $I_{T_1}^{\mathrm{b}}$, $x_{m_1+2},\ldots,x_{m}$ the ones by $I_{T_2}^{\mathrm{b}}$ and
$${\abs{x-y}^d > \kappa^{1/\delta}(t\wedge s)^{-\gamma} (t\vee s)^{-\gamma/\delta}},$$ 
\item $I^{\mathrm{rl}}_T$ is the union of $I^{\mathrm{rl}}_{T_1}$ and $I^{\mathrm{rl}}_{T_2}$ where the values of the pairs of $I^{\mathrm{rl}}_{T_2}$ have been increased by $m_1+1$ and in the same way $I^{\mathrm{bc}}_{T}$ directly emerges from $I^{\mathrm{bc}}_{T_1}$ and $I^{\mathrm{bc}}_{T_2}$. \medskip
\end{itemize}
Hence, $e_K(\x,\y,k)$ can be obtained by summing $e_K^T(\x,\y)$ over all $T\in \mathcal{T}_{k-1}^c$.
}\end{proof}
\bigskip

\begin{lemma}[$k$-connection lemma]\label{lem:kconn}
Let $\x = (x,t), \y = (y,s)$ be two given vertices with $\smash{\abs{x-y}^d > \kappa^{1/\delta}(t\wedge s)^{-\gamma} (t\vee s)^{-\gamma/\delta}}$ and {$0<\ell<\frac{1}{e}$} such that $\ell<t\vee s$. Then there exists $C>1$ such that, for $k\geq 3$, we have
\[
e_K(\x,\y,k) \leq C^{k-1}\ell^{(\gaus{\frac{k}{2}}-1)(1-\gamma-\gamma/\delta)}\log(\frac{1}{\ell})^{k_{*}} \kappa(t\wedge s)^{-\gamma \delta}(t\vee s)^{-\gamma}\abs{x-y}^{-d\delta}
\]
where $k_*:= k \mod 2$.
\end{lemma}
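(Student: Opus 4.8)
The plan is to prove the estimate by strong induction on $k$, fed into the recursion~\eqref{eq:e_def_rec}. The cases $k=1$ and $k=2$ are the base: for $k=1$ one has $e_K(\x,\y,1)=\rho(\kappa^{-1/\delta}(t\wedge s)^{\gamma}(t\vee s)^{1-\gamma}\abs{x-y}^d)$, and since $\gamma>\tfrac{\delta}{\delta+1}$ implies $(\gamma-1)\delta\geq-\gamma$ this is at most $1\wedge\kappa(t\wedge s)^{-\gamma\delta}(t\vee s)^{-\gamma}\abs{x-y}^{-d\delta}$; for $k=2$ this is Lemma~\ref{twoconnlem}, together with the fact that $e_K(\x,\y,2)=1$ below the threshold $\abs{x-y}^d>\kappa^{1/\delta}(t\wedge s)^{-\gamma}(t\vee s)^{-\gamma/\delta}$ (below which the claim for $k\geq3$ is anyway vacuous because $e_K(\x,\y,k)=0$ there). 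It is convenient to carry through the induction a sharpened statement in which the fixed threshold $\ell$ is replaced by the larger quantity $t\vee s$ and $\log(1/\ell)$ by $\lambda(t\vee s):=\log\tfrac{e}{t\vee s}$: writing $\Lambda_k(r):=r^{(\gaus{k/2}-1)(1-\gamma-\gamma/\delta)}\lambda(r)^{k_*}$ for $k\geq3$ (and $\Lambda_1=\Lambda_2\equiv1$), the inductive claim is $e_K(\x,\y,k)\leq C^{k-1}\Lambda_k(t\vee s)\bigl(1\wedge\kappa(t\wedge s)^{-\gamma\delta}(t\vee s)^{-\gamma}\abs{x-y}^{-d\delta}\bigr)$. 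Keeping the truncation $1\wedge(\cdot)$ is essential so that the spatial integrals below converge, and the stated lemma follows at the end from $\ell\leq t\vee s<1$, $\ell<\tfrac1e$ and $1-\gamma-\gamma/\delta<0$.

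For the inductive step each summand $\int_{\mathbb{R}^d\times(t\vee s,1]}\mathd\z\,e_K(\x,\z,i)e_K(\z,\y,k-i)$ is treated by the two-step estimate already used in Lemma~\ref{twoconnlem}: since $B_{\abs{x-y}/2}(x)$ and $B_{\abs{x-y}/2}(y)$ are disjoint, every $\z=(z,u)$ lies in $\{\abs{x-z}\geq\abs{x-y}/2\}$ or $\{\abs{y-z}\geq\abs{x-y}/2\}$; on each region one bounds the $e_K$-factor attached to the far endpoint by its value at distance $\abs{x-y}/2$ and integrates the other factor over all of $\mathbb{R}^d$, which by the same substitution as in Lemma~\ref{twoconnlem} yields $I_\rho$ times a power of the common mark $u$. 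The resulting expression is then integrated in $u$ over $(t\vee s,1]$. Here the two sign conditions $(\gamma-1)\delta-\gamma/\delta>-1>-\gamma-\gamma/\delta$, both equivalent to $\gamma>\tfrac{\delta}{\delta+1}$, decide the outcome: when both conductances $i,k-i$ exceed~$1$ (so both $e_K$-factors are of ``Lemma~\ref{twoconnlem}-type'', each carrying a factor $u^{-\gamma}$) the $u$-exponent is $(\gaus{k/2}-1)(1-\gamma-\gamma/\delta)-1$, or one unit of $(1-\gamma-\gamma/\delta)$ less, hence $<-1$, and $\int_{t\vee s}^1 u^c\lambda(u)^p\,\mathd u\lesssim\tfrac{1}{\abs{c+1}}(t\vee s)^{c+1}\lambda(t\vee s)^p$ produces exactly the power $(t\vee s)^{(\gaus{k/2}-1)(1-\gamma-\gamma/\delta)}$ together with a constant of order $\tfrac{1}{(\gaus{k/2}-1)\abs{1-\gamma-\gamma/\delta}}$; whereas a summand with $i=1$ or $k-i=1$ produces, in one of the two regions, an exponent-$(-1)$ $u$-integral and so the single extra factor $\lambda(t\vee s)$. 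Reassembling, with $\Lambda_i(u)\Lambda_{k-i}(u)$ supplying the $u$-dependence of the two prefactors and with any surplus $\lambda(\cdot)$-factors absorbed against small negative powers (legitimate since $\log\tfrac er=o(r^{-\eta})$ for every $\eta>0$, $r\in(0,1)$), each summand is bounded by an absolute constant times $C^{k-2}\Lambda_k(t\vee s)\,\kappa(t\wedge s)^{-\gamma\delta}(t\vee s)^{-\gamma}\abs{x-y}^{-d\delta}$.

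The crux---and the one genuinely nontrivial point---is the number of summands: the recursion~\eqref{eq:e_def_rec} has $k-1$ terms, each contributing a priori the same order $C^{k-2}\times(\text{target})$, so a naive induction would force $C$ to grow linearly in~$k$. This is resolved by the mark-dependent hypothesis: because each sub-factor $e_K(\z,\cdot,j)$ comes with the prefactor $\Lambda_j(u)$ carrying the power $u^{(\gaus{j/2}-1)(1-\gamma-\gamma/\delta)}$, the exponent of $u$ in the final $u$-integral grows more negative as $k$ grows, and the elementary estimate $\int_{t\vee s}^1 u^c\,\mathd u=\tfrac{(t\vee s)^{c+1}-1}{\abs{c+1}}$ then supplies a factor of order $\tfrac{1}{\gaus{k/2}-1}$---precisely enough to offset the $\Theta(k)$ summands upon summation. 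For this it is essential never to bound $t\vee s$ from below by $\ell$ during the induction, since that would make the prefactors mark-independent, freeze the $u$-exponent at the constant value $-\gamma-\gamma/\delta$, and destroy the cancellation. Once the step closes with a $k$-independent $C$ (taken large enough to absorb $2^{d\delta}$, $I_\rho$, $\abs{1-\gamma-\gamma/\delta}^{-1}$, the constant of Lemma~\ref{twoconnlem}, and finitely many suprema of the form $\sup_{r\in(0,1)}r^{\eta}\log\tfrac er$), the reduction to the stated inequality is immediate. I expect the fiddly part to be exactly this bookkeeping---tracking the $\Lambda_j$-prefactors through the two-region split and checking that the $\lambda(\cdot)$-factors always reassemble into the single claimed power $k_*=k\bmod2$ rather than piling up---while the individual integral bounds are routine variants of the computation in Lemma~\ref{twoconnlem}.
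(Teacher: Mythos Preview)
Your plan is correct and would lead to a valid proof, but it follows a genuinely different route from the paper's. The paper keeps the threshold $\ell$ \emph{fixed} in the inductive hypothesis, carrying instead an explicit Catalan factor:
\[
e_K(\x,\y,k)\leq \Cat(k-1)\,C^{k-1}\,\ell^{(\gaus{k/2}-1)(1-\gamma-\gamma/\delta)}\log(\tfrac1\ell)^{k_*}\,\rho\bigl(\kappa^{-1/\delta}(t\wedge s)^\gamma(t\vee s)^{\gamma/\delta}\abs{x-y}^d\bigr).
\]
Because the $\ell$-prefactor is then constant in $u$, it simply factors out of the inner integral, which is handled once and for all by Lemmas~\ref{cal_lem_1} and~\ref{cal_lem_2}; the $k-1$ summands in the recursion are absorbed not by any $1/k$ gain from integration but by the Catalan identity $\sum_{i=1}^{k-1}\Cat(i-1)\Cat(k-i-1)=\Cat(k-1)$, with $\Cat(k-1)\leq 4^{k-1}$ folded into $C$ at the end. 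The only parity bookkeeping needed is the single estimate $\log(1/\ell)^2\lesssim\ell^{1-\gamma-\gamma/\delta}$, used once per both-odd split. Your route trades this combinatorial identity for an analytic cancellation: carrying the mark-dependent $\Lambda_k(t\vee s)$ makes the $u$-integrand of order $u^{(\gaus{k/2}-1)(1-\gamma-\gamma/\delta)-1}$, whose primitive supplies a factor $\sim 1/(\gaus{k/2}-1)$, and a short parity count shows this exactly balances the number of summands with a $k$-independent constant. You thereby obtain the slightly sharper intermediate bound (with $t\vee s$ in place of $\ell$) at the price of more case-work in the inductive step---the two spatial regions interact differently with the $i=1$ boundary terms, and the both-odd summands require the absorption $r^{\gamma+\gamma/\delta-1}\lambda(r)^2\leq M$---whereas the paper sacrifices that sharpness for a cleaner induction.
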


\begin{proof}
Choose $C>1$ such that $C$ is larger than the constants appearing in Lemma \ref{twoconnlem} and Lemmas~\ref{cal_lem_1} and \ref{cal_lem_2} of the appendix. We now show by induction that 
\begin{align}
e_K(\x,\y,k) \leq \Cat(k-1) C^{k-1} & \, \ell^{(\gaus{\frac{k}{2}}-1)(1-\gamma-\gamma/\delta)}\notag \\ &\log(\tfrac{1}{\ell})^{k_{*}}\rho\big(\kappa^{-1/\delta} (t\wedge s)^{\gamma}(t\vee s)^{\gamma/\delta}\abs{x-y}^{d}\big) \label{eq:kconn_bound}
\end{align}
holds for all $k\geq 2$, where $\Cat(k-1)$ is the $(k-1)$-th Catalan number. Note that, for $k\geq 2$, it holds $e_K(\x,\y,k) \leq 1$ for $\smash{\abs{x-y}^d \leq \kappa^{1/\delta}(t\wedge s)^{-\gamma} (t\vee s)^{-\gamma/\delta}}$. Thus, it remains to show \eqref{eq:kconn_bound} under the condition $\smash{\abs{x-y}^d > \kappa^{1/\delta}(t\wedge s)^{-\gamma} (t\vee s)^{-\gamma/\delta}}$.
For $k=2$, the bound \eqref{eq:kconn_bound} is already established by Lemma \ref{twoconnlem}. If $k=3$ and 
$\smash{\abs{x-y}^d > \kappa^{1/\delta}(t\wedge s)^{-\gamma} (t\vee s)^{-\gamma/\delta}}$, by~\eqref{eq:e_def_rec} 
we have
\begin{equation*}
e_K(\x,\y,3) \leq \int\limits_{t\vee s}^1 \mathd u \int\limits_{\mathbb{R}^d} \mathd z e(\x,\z,1)e(\z,\y,2) + \int\limits_{t\vee s}^1 \mathd u \int\limits_{\mathbb{R}^d} \mathd z e(\x,\z,2)e(\z,\y,1).
\end{equation*}
{Using the bounds established in Lemma \ref{twoconnlem} together with Lemma \ref{cal_lem_2} leads to}
\begin{equation*}
e_K(\x,\y,3) \leq 2C^2\log(\tfrac{1}{\ell}) \kappa (t\wedge s)^{-\gamma\delta}(t\vee s)^{-\gamma}\abs{x-y}^{-d \delta}.
\end{equation*}
Let $k\geq 4$ and assume that \eqref{eq:kconn_bound} holds for all $j = 2,\ldots, k-1$. For $\x, \y$ such that  \smash{$\abs{x-y}^d > \kappa^{1/\delta} (t\wedge s)^{-\gamma}(t \vee s)^{-\gamma/\delta}$}, by~$\eqref{eq:e_def_rec}$,
\[
e_K(\x,\y,k) = \sum_{i=1}^{k-1}\int\limits_{\mathbb{R}^d\times (t\vee s,1]} \mathd \z e_K(\x,\z,i)e_K(\z,\y,k-i).
\]
With \eqref{eq:kconn_bound} we hence get,
\begin{align*}
e_K& (\x,\y,k) \leq  \,\, \sum_{i=2}^{k-2} \Big[ C^{k-2}\ell^{(\gaus{\frac{i}{2}}+\gaus{\frac{k-i}{2}}-2)(1-\gamma-\gamma/\delta)}\log(\tfrac{1}{\ell})^{i_{*}+(k-i)_{*}}\Cat(i-1)\\ & \Cat(k-i-1) \int\limits_{t\vee s}^1 \mathd u \int\limits_{\mathbb{R}^d} \mathd z\rho\big(\kappa^{-1/\delta} t^{\gamma}u^{\gamma/\delta}\abs{x-z}^{d}\big)\rho\big(\kappa^{-1/\delta} s^{\gamma}u^{\gamma/\delta}\abs{z-y}^{d}\big)\Big]\\
& + C^{k-2}\ell^{(\gaus{\frac{k-1}{2}}-1)(1-\gamma-\gamma/\delta)}\log(\tfrac{1}{\ell})^{(k-1)_*}\Cat(0)\Cat(k-2)\\
&\phantom{pp}\times  \int\limits_{t\vee s}^1 \mathd u \int\limits_{\mathbb{R}^d} \mathd z\rho\big(\kappa^{-1/\delta} t^{\gamma}u^{1-\gamma}\abs{x-z}^{d}\big)\rho\big(\kappa^{-1/\delta} s^{\gamma}u^{\gamma/\delta}\abs{z-y}^{d}\big)
\\
& +  C^{k-2}\ell^{(\gaus{\frac{k-1}{2}}-1)(1-\gamma-\gamma/\delta)}\log(\tfrac{1}{\ell})^{(k-1)_*}\Cat(k-2)\Cat(0) \\
&\phantom{pp}\times \int\limits_{t\vee s}^1 \mathd u \int\limits_{\mathbb{R}^d} \mathd z\rho\big(\kappa^{-1/\delta} t^{\gamma}u^{\gamma/\delta}\abs{x-z}^{d}\big)\rho\big(\kappa^{-1/\delta} s^{\gamma}u^{1-\gamma}\abs{z-y}^{d}\big).
\end{align*}
Using Lemma \ref{cal_lem_1} and Lemma \ref{cal_lem_2} the last expression can be further bounded by
\begin{align*}
&\kappa(t\wedge s)^{-\gamma \delta}(t\vee s)^{-\gamma}\abs{x-y}^{-d\delta}C^{k-1} \\ & \times \bigg[\sum_{i=2}^{k-2} \Cat(i-1)\Cat(k-i-1) \ell^{(\gaus{\frac{i}{2}}+\gaus{\frac{k-i}{2}}-1)(1-\gamma-\gamma/\delta)}
 \log(\tfrac{1}{\ell})^{i_{*}+(k-i)_{*}}\\ &
\phantom{XXX} + 2l^{(\gaus{\frac{k-1}{2}}-1)(1-\gamma-\gamma/\delta)}\log(\tfrac{1}{\ell})^{(k-1)_*+1}\Cat(0)\Cat(k-2)\bigg].
\end{align*}
If $k$ is even, $i$ and $k-i$ need to be either both even or both odd, for $i=1,\ldots,k-1$. Since $\ell>0$ is chosen small enough that $\log(\frac{1}{\ell})^2 < \ell^{1-\gamma-\gamma/\delta}$, we have that in both cases
\[
\ell^{(\gaus{\frac{i}{2}}+\gaus{\frac{k-i}{2}}-1)(1-\gamma-\gamma/\delta)}\log(\tfrac{1}{\ell})^{i_{*}+(k-i)_{*}} < \ell^{(\gaus{\frac{k}{2}}-1)(1-\gamma-\gamma/\delta)}.
\]
If $k$ is odd, an analogous observation leads to 
\[
\ell^{(\gaus{\frac{i}{2}}+\gaus{\frac{k-i}{2}}-1)(1-\gamma-\gamma/\delta)}\log(\tfrac{1}{\ell})^{i_{*}+(k-i)_{*}} < \ell^{(\gaus{\frac{k}{2}}-1)(1-\gamma-\gamma/\delta)}\log(\tfrac{1}{\ell}).
\]
Hence, we have
\begin{align*}
e_K(\x,\y,k) &\leq \kappa(t\wedge s)^{-\gamma \delta}(t\vee s)^{-\gamma}\abs{x-y}^{-d\delta}C^{k-1}\ell^{(\gaus{\frac{k}{2}}-1)(1-\gamma-\gamma/\delta)}\\
& \phantom{loemloeml} \times\log(\tfrac{1}{\ell})^{k_*}\sum_{i=1}^{k-1} \Cat(i-1)\Cat(k-i-1)\\
&\leq \kappa(t\wedge s)^{-\gamma \delta}(t\vee s)^{-\gamma}\abs{x-y}^{-d\delta}\Cat(k-1) C^{k-1}\ell^{(\gaus{\frac{k}{2}}-1)(1-\gamma-\gamma/\delta)}\log(\tfrac{1}{\ell})^{k_*}
\end{align*}
and \eqref{eq:kconn_bound} holds for $k$. The observation that $\Cat(k)\leq 4^k$ concludes the proof.
\end{proof}

\paragraph{Probability bounds for bad paths.}
With Lemma \ref{lem:kconn} we can establish a bound for \smash{$\E_{\x,\y}N(\x,\y,n)$}, {recall the definitions in Section~\ref{Ndef}.} As in \eqref{eq:recdecomp} and \eqref{eq:recdecomp2}, we have
\begin{equation}
N(\x,\y,n) \leq K(\x,\y,n) + \sum_{k=1}^{n-1} \sum_{\substack{\z = (z,u)\\ t>u> \ell_{n-k}\vee s}} N(\x,\z,n-k)K(\z,\y,k) \,\, .\label{eq:recdecomp_rv_N}
\end{equation}
Here $\z$ is the most powerful vertex of the path disregarding $\y$ and connects to $\y$ via less powerful vertices.
As done for $K(\x,\y,k)$ in the previous section we compare $\E_{\x,\y}N(\x,\y,n)$ with a deterministic mapping  
\[
{e_N \colon(\mathbb{R}^d\times (0,1])^2 \times \mathbb{N} \to [0,\infty),}
\]
defined as
\[
e_N(\x,\y,1) = \rho(\kappa^{-1/\delta}(t\wedge s)^{\gamma}(t\vee s)^{1-\gamma}\abs{x-y}^d),\quad \text{for}\ \x,\y \in \mathbb{R}^d\times (0,1],
\]
and for $n\geq 2$
\begin{equation}
e_N(\x,\y,n) = e_K(\x,\y,n) + \sum_{i=1}^{n-1}\int\limits_{\mathbb{R}^d\times (\ell_{n-k}\vee s,1]} \mathd \z e_N(\x,\z,n-k)e_K(\z,\y,k), \label{eq:e_def_rec_N}
\end{equation}
for $\x,\y \in \mathbb{R}^d\times (0,1],$
if $\smash{\abs{x-y}^d > \kappa^{1/\delta}(t\wedge s)^{-\gamma} (t\vee s)^{-\gamma/\delta}}$, and otherwise $e_N(\x,\y,2) = 1$ and $e_N(\x,\y,n) = 0$ for $n\geq 3$. 
\begin{lemma}\label{lem:path_number_determ_bound}
Let $\x, \y \in \mathbb{R}^d\times (0,1]$ be two given vertices. Then, for all $n\in \mathbb{N}$, we have
\begin{equation}
\E_{\x,\y} N(\x,\y,n) \leq e_N(\x,\y,n).
\end{equation}
\end{lemma}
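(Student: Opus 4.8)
The plan is to argue by induction on $n$, mirroring the recursion \eqref{eq:recdecomp_rv_N} for $N$ against the recursion \eqref{eq:e_def_rec_N} defining $e_N$, feeding in Lemma~\ref{lem:kconn_number_determ_bound} for the $K$-blocks. First I would dispatch the base cases. For $n=1$ we have $\E_{\x,\y}N(\x,\y,1)\le\P_{\x,\y}(\x\sim\y)$, which is at most $1$ and, by Assumption~\ref{ass:main1} applied to the single pair $(\x,\y)$, also at most $\kappa(t\wedge s)^{-\gamma\delta}(t\vee s)^{\delta(\gamma-1)}\abs{x-y}^{-\delta d}$; hence it is bounded by $\rho\big(\kappa^{-1/\delta}(t\wedge s)^\gamma(t\vee s)^{1-\gamma}\abs{x-y}^d\big)=e_N(\x,\y,1)$. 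When $\abs{x-y}^d\le\kappa^{1/\delta}(t\wedge s)^{-\gamma}(t\vee s)^{-\gamma/\delta}$ there is at most one step minimizing path of conductance two (the conductance-two edge) and none of conductance at least three, so $N(\x,\y,2)\le 1=e_N(\x,\y,2)$ and $N(\x,\y,n)=0=e_N(\x,\y,n)$ for $n\ge3$, matching the definition of $e_N$ in that regime.

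For the inductive step I would fix $n\ge2$ and $\x=(x,t),\y=(y,s)$ with $\abs{x-y}^d>\kappa^{1/\delta}(t\wedge s)^{-\gamma}(t\vee s)^{-\gamma/\delta}$, assume the statement for all conductances below $n$, and take expectations in \eqref{eq:recdecomp_rv_N}:
\[
\E_{\x,\y}N(\x,\y,n)\ \le\ \E_{\x,\y}K(\x,\y,n)+\sum_{k=1}^{n-1}\E_{\x,\y}\Big[\sum_{\substack{\z=(z,u)\\ \ell_{n-k}\vee s<u<t}}N(\x,\z,n-k)\,K(\z,\y,k)\Big].
\]
Lemma~\ref{lem:kconn_number_determ_bound} bounds the first term by $e_K(\x,\y,n)$. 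For a term of the second sum, Mecke's formula turns the sum over the Poisson point $\z$ into $\int_{\mathbb{R}^d\times(\ell_{n-k}\vee s,t]}\mathd\z\,\E_{\x,\z,\y}[N(\x,\z,n-k)K(\z,\y,k)]$, so it would remain only to bound this integrand by $e_N(\x,\z,n-k)\,e_K(\z,\y,k)$; plugging this in (and harmlessly enlarging the integration range) reproduces the right-hand side of \eqref{eq:e_def_rec_N}, that is $e_N(\x,\y,n)$, and closes the induction.

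The hard part will be the bound $\E_{\x,\z,\y}[N(\x,\z,n-k)K(\z,\y,k)]\le e_N(\x,\z,n-k)\,e_K(\z,\y,k)$, because under Assumption~\ref{ass:main1} edges are not conditionally independent and the estimate is \emph{not} a product of expectations. The route I would take is the one already used in Lemma~\ref{lem:kconn_number_determ_bound}: a path counted by $N(\x,\y,n,k)$ splits at its most powerful vertex $\z$ into a piece $P_1$ from $\x$ to $\z$ of the type counted by $N(\x,\z,n-k)$ and a piece $P_2$ from $\z$ to $\y$ of the type counted by $K(\z,\y,k)$, the two pieces meeting only in the endpoint $\z$; hence $N(\x,\z,n-k)K(\z,\y,k)$ may be replaced by the number of such pairs $(P_1,P_2)$ that are vertex-disjoint apart from $\z$, which still dominates $N(\x,\y,n,k)$. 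Expanding this count and applying Mecke's formula to the internal vertices of $P_1$ and $P_2$ — distinct Poisson points, hence integrated over disjoint families of variables — one may estimate the joint connection probability of the pair set $E(P_1)\cup E(P_2)$ by Assumption~\ref{ass:main1}; this is licit precisely because, the pieces being disjoint except at the shared \emph{endpoint} $\z$, every vertex appears in at most two pairs of this set. The resulting product of single-edge bounds, carrying along the mark and distance constraints, then factorises into a $P_1$-part and a $P_2$-part; the former is bounded by $e_N(\x,\z,n-k)$ using the induction hypothesis (in the form already incorporating Assumption~\ref{ass:main1}, which is what the inductive argument in fact produces), the latter by $e_K(\z,\y,k)$ by the argument of Lemma~\ref{lem:kconn_number_determ_bound}. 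Everything else — the base cases, the Mecke applications, and matching the two recursions term by term — is routine bookkeeping, so the crux is exactly this decoupling of the two pieces at the single shared vertex $\z$, which is what rescues the applicability of Assumption~\ref{ass:main1} in the absence of conditional independence.
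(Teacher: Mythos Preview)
Your plan is on the right track conceptually --- decoupling the two pieces at the shared vertex $\z$ and invoking the ``each vertex at most twice'' clause of Assumption~\ref{ass:main1} is exactly the mechanism that makes the argument go through --- but the induction as you have set it up does not close. The hypothesis you carry is $\E_{\x,\z}N(\x,\z,m)\le e_N(\x,\z,m)$, a statement about an \emph{expectation}. In the inductive step, however, once you have expanded both pieces down to edges, applied Mecke to the internal vertices, and applied Assumption~\ref{ass:main1} to the joint edge set $E(P_1)\cup E(P_2)$, the $P_1$-factor you are left with is not $\E N(\x,\z,n-k)$ at all: it is the purely deterministic quantity
\[
\sum_{\text{$P_1$-structures}}\ \int\mathd(\text{internal vertices of }P_1)\ [\text{constraints}]\ \prod_{e\in P_1}(\text{single-edge bound}),
\]
and nothing in your induction hypothesis bounds this by $e_N(\x,\z,n-k)$. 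Your parenthetical ``in the form already incorporating Assumption~\ref{ass:main1}'' is precisely where the real work hides: you would need to strengthen the hypothesis to the statement that this deterministic, fully-expanded bound is itself $\le e_N$, and then verify that \emph{it} obeys the recursion~\eqref{eq:e_def_rec_N}. Checking that recursion for the deterministic bound amounts to decomposing the totality of admissible $P_1$-structures at the last powerful vertex --- which is exactly the content of the backbone-tree classification $\mathcal{T}_n^{cb}$ the paper introduces.

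In short: the paper does not run an induction on the expectation. It performs the full expansion once, organises the path structures by coloured backbone trees (so that each path is assigned to a unique $T\in\mathcal{T}_n^{cb}$), applies Mecke and Assumption~\ref{ass:main1} to obtain a deterministic integral $e_N^T(\x,\y)$ for each tree, and then checks that $\sum_T e_N^T(\x,\y)=e_N(\x,\y,n)$ by matching the splitting of $T$ at backbone vertices with the recursion~\eqref{eq:e_def_rec_N}. Your approach can be salvaged, but only by formulating and proving the stronger deterministic statement --- and at that point you are essentially reproducing the tree argument without the bookkeeping that makes it transparent.
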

\begin{proof}
First recall that for $\smash{\abs{x-y}^d \leq \kappa^{1/\delta}(t\wedge s)^{-\gamma} (t\vee s)^{-\gamma/\delta}}$ we have $N(\x,\y,n) = 0$ for $n\geq 3$ and $N(\x,\y,2) = 1$. Thus in this case $N(\x,\y,n)$ is equal to $e_N(\x,\y,n)$ and consequently {their expectations are equal}. Otherwise, the proof follows the same argument as in Lemma \ref{lem:kconn_number_determ_bound}, where we again classify the possible connection strategies between $\x$ 
and~$\y$ through coloured binary trees. We therefore only briefly present the required class of trees, explain the association of a path to the corresponding tree and the restrictions on {marks} and space which a step minimizing path that associates to $T$ has to satisfy.\pagebreak[3]\medskip 

{Let $\mathcal{T}_{n}^{cb}$ be a class of coloured rooted binary trees with $n$ vertices which are constructed as follows. For $k\leq n$, we have a \emph{backbone} 
consisting of $k$ vertices, starting with the root followed by $k-1$ vertices, each a left child of the previous one. The last vertex in this line is coloured red, the others blue.
Let $i_1,\ldots,i_k\in \mathbb{N}$ with $i_1+\ldots + i_k = n-k$. A  tree $T\in \mathcal{T}_{n}^{cb}$ is formed by attaching to the $j$-th vertex (as seen by a left-to-right exploration of the backbone) a coloured subtree $T_j\in \mathcal{T}_{i_j}^{c}$ rooted in its right child, for $j=1,\ldots,k$.}\medskip 

{To any path $P = (\x_0,\x_1,\ldots,\x_{m+1})$ with {$\x_0 = \x$ and $\x_{m+1} = \y$} where the connecting vertices have larger marks than $\y$ we associate a tree $T \in \mathcal{T}_n^{cb}$ as follows. We say $\x_i$ is a \emph{powerful} vertex of $P$ if $t_i\leq t_j$ for all $j=0,\ldots,i-1$. By definition, the vertices $\x_0$ and $\x_m$ are always powerful vertices. We denote by $\set{\x_{i_1},\ldots,\x_{i_{k+1}}}$ the set of powerful vertices keeping the order in the path. Then two consecutive powerful vertices $\x_{i_j}$ and $\x_{i_{j+1}}$ are, by definition, connected  via a path of connectors $\x_{i_j+1},\ldots \x_{i_{j+1}-1}$ of conductance $w_j:=w_P(i_{j+1})-w_P(i_j)$. If $w_j\geq 2$, associate the connectors of the path connecting $\x_{i_j}$ and $\x_{i_{j+1}}$ to a non-empty coloured tree \smash{$T_j \in \mathcal{T}_{w_j-1}^c$} as in the proof of 
Lemma~\ref{lem:kconn_number_determ_bound}.
Let $T \in \mathcal{T}_n^{cb}$ be the coloured tree which has a backbone of length $k$ and where $T_{j}$ is attached to the $j$-th vertex (as seen by a left-to-right exploration of the backbone) {such} that its right child is the root of $T_j$, see Figure~5 for an example.} 
\bigskip\\

\begin{figure}[h]
\begin{center}
\begin{tikzpicture}[scale=0.30, every node/.style={scale=0.7}]
\node (A) at (0,6.5)[circle, draw, fill= black ,label={left:$\x$}] {};
\node (B) at (2,9.9)[circle, draw, fill=gray, label={$\x_1$}] {};
\node (C) at (4,5.5)[circle, draw, fill= black, label={left:$\x_2$}] {};
\node (D) at (6,10)[circle,  draw, fill=gray, label = {$\x_3$}] {};
\node (E) at (8,10.5)[circle,  draw, fill=gray, label = {$\x_4$}] {};
\node (F) at (10, 4)[circle, draw,fill = black, label={left:$\x_5$}] {};
\node (G) at (12,9.7) [circle, draw, fill=gray, label={$\x_6$}] {};
\node (H) at (14,10.3)[circle, draw,fill=gray, label={$\x_7$}] {};
\node (I) at (16,8.8) [circle, draw,fill= gray, label={$\x_8$}] {};
\node (J) at (18,9.8) [circle, draw,fill= gray, label={$\x_9$}] {};
\node (K) at (20,0)[circle, fill = black, label={left:$\y$}] {};
\draw (A) to (B) to (C);
\draw[very thick](C) to (D);
\draw (D) to (E) to (F) to (G) to (H) to (I) to (J) to (K);
\draw[loosely dotted] (1,9.4) rectangle (3,11.9);
\draw[dashed] (5,9.5) rectangle (9,12.5);
\draw[dotted] (11,8.3) rectangle (19,12.3);
\end{tikzpicture}
\qquad
\begin{tikzpicture}[scale=0.40, every node/.style={scale=0.7}]
\node (F) at (8, 1)[circle, draw,fill = cyan, label={left:$\x_5$}] {};
\node (C) at (5,3)[circle, draw, fill= cyan, label={left:$\x_2$}] {};
\node (D2) at (6,7)[circle,  draw, fill= red, label = {}] {};
\node (A) at (2,5)[circle, draw, fill= red, label={left:$\x$}] {};
\node (B) at (4,7)[circle, draw, fill= cyan, label={$\x_1$}] {};
\node (D) at (7,5)[circle,  draw, fill= cyan, label = {$\x_3$}] {};
\node (E) at (8,7)[circle,  draw, fill= cyan, label = {$\x_4$}] {};
\node (J) at (10,3) [circle, draw,fill= cyan, label={$\x_8$}] {};
\node (G) at (11,5) [circle, draw, fill= cyan, label={$\x_{9}$}] {};
\node (H) at (9,5)[circle, draw,fill= cyan, label={$\x_6$}] {};
\node (I) at (10,7) [circle, draw,fill= cyan, label={$\x_7$}] {};
\draw (A) to (C) to (F);
\draw[dashed] (A) to (B);
\draw[dashed] (C) to (D) to (E);
\draw[dashed] (F) to (J) to (G);
\draw[dashed] (J) to (H) to (I);
\draw[dashed] (D) to (D2);
\draw[loosely dotted] (3,6.5) rectangle (5,8.5);
\draw[dashed] (5.6,4.5) rectangle (8.4,8.5);
\draw[dotted] (8.6,2.5) rectangle (11.4,8.5);
\end{tikzpicture}
\caption{Associating a coloured binary tree to a path. The powerful vertices of the path are indicated in black. We have $k=3$ 
vertices on the backbone. The three trees attached to the backbone are constructed as in Figure~4, where the vertices with the smallest mark on the connecting paths are the roots which are attached as right children to the backbone.}\label{fig:dec_order-tree2}
\end{center}
\end{figure}
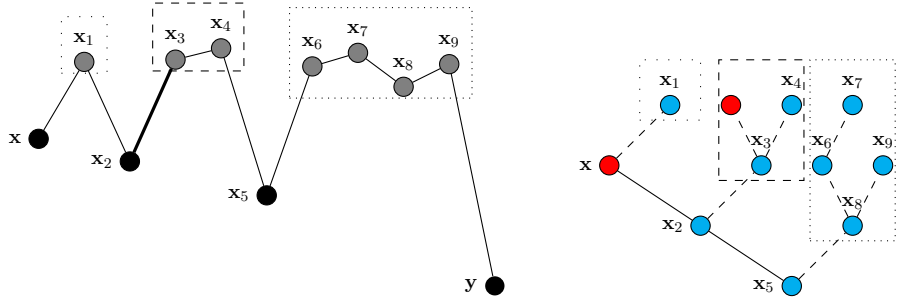

Given a tree $T\in \mathcal{T}_{n}^{cb}$, let $m$ be the number of blue vertices of the tree and $k$ the number of vertices of the backbone. As in the proof of Lemma~\ref{lem:kconn_number_determ_bound}, we define a labelling 
\[
\sigma_T:\{0,\ldots,m\} \to T, i\mapsto\sigma_T(i),
\]
by letting $\sigma_T(0)$ be the red vertex on the backbone and $\sigma_T(i)$ be the $i$th vertex seen by a left-to-right exploration of the blue vertices of $T$. Define the bijection
\[
\tau_T:\{0,\ldots,n-1\} \to T, i \mapsto \tau_T(i),
\]
by letting $\tau_T(0)$ be the red vertex on the backbone and $\tau_T(i)$ be the $(i+1)$st vertex seen by a left-to-right exploration of all other vertices of the tree. Denote by $v_1,\ldots,v_k$ the vertices of the backbone of $T$ and $T_1,\ldots,T_k$ the subtrees rooted in their right child. Set $i_j := \sigma^{-1}(v_{j})$, for $i=1,\ldots,k$, and $i_{k+1} := m+1$. 
Then, the following restrictions on {marks} and space are satisfied by the vertices $\x_1,\ldots,\x_{m}$ of any path connecting $\x_0 = \x$ and $\x_{m+1} = \y$ to which $T$ is associated:
\medskip
\begin{enumerate}[(i)]
\item $t_{i_{j}}>t_{i_{j+1}}$, for $j=1,\ldots,k$, \medskip
\item if there exists a vertex $v_j$ of the backbone with $\tau_T^{-1}(v_j) \geq 2$, then
\[
\abs{x_0 - x_{i_j}}^d > \kappa^{1/\delta}(t_0\wedge t_{i_j})^{-\gamma} (t_0\vee t_{i_j})^{-\gamma/\delta},
\]
\item for $j=1,\ldots,k$, the vertices $\x_{i_j+1},\ldots,\x_{i_{j+1}-1}$ satisfy the four restrictions on {marks} and space given by the coloured tree $T_i$ and $\x_{i_j},\x_{i_{j+1}}$ as described prior to the proof of Lemma~\ref{lem:kconn_number_determ_bound}.\medskip
\end{enumerate}
For $T\in \mathcal{T}_{n}^{cb}$, we define $N_T(\x,\y)$ as the number of step minimizing paths to which $T$ is associated. Denote again by $v_1,\ldots,v_k$ the vertices of the backbone of $T$ and set $i_j := \sigma^{-1}(v_j)$, $i_{k+1} := m+1$. Splitting the tree at each blue vertex of the backbone leads to
\begin{align*}
N_T(\x,\y) \leq \sum_{\x_{i_2},\ldots,\x_{i_{k}}}&1\{t_{i_1}>\ldots>t_{i_{k+1}} \} \\
&\!\!\!\! \prod_{\substack{2\leq j\leq k\\ \tau_T^{-1}(v_j)\geq 2}} 1\{\abs{x_0 - x_{i_j}}^d > \kappa^{1/\delta}(t_0\wedge t_{i_j})^{-\gamma} (t_0\vee t_{i_j})^{-\gamma/\delta}\}\\
&\prod_{1\leq j\leq k} K_{T_j}(\x_{i_j},\x_{i_{j+1}}),
\end{align*}
where $T_j$ is the subtree attached to the right child of $v_j$. Proceeding for each $K_{T_j}$ and using the iterative structure of $e_N$ as in the proof of Lemma~\ref{lem:kconn_number_determ_bound} yields the result.
\end{proof}

As a path described by the event $\tilde{A}_n^{_{(\x)}}$ (recall the definition from Section~\ref{lem:kconn_number_determ_bound}) has a restriction on the mark but not on the location of its last vertex, we can use the integral 
\begin{equation}\label{hia}
\int\limits_{\mathbb{R}^d} \mathd y \,\, \E_{\x,\y}N(\x,\y,n),
\end{equation} 
with $\y = (y,s)$ and $s$ smaller than some yet to be determined value to bound $\P_{\x}(\tilde{A}_n^{_{(\x)}})$. Thus, we define for given $\x = (x,t)$ and $n\in \mathbb{N}$ the mapping $\mu_n^\x:(0,t] \to [0,\infty)$ by 
\begin{equation}
\mu_n^\x(s) := \int_{\mathbb{R}^d}\mathd y \, e_N(\x,\y,n), \quad \text{for}\ s\in (0,t], \y = (y,s).\label{eq:mu_def}
\end{equation} 
Recall that we write $k_*:= k\pmod 2$ and $I_\rho := \int\mathd x \, \rho(\kappa^{-1/\delta}\abs{x}^d)$. By the definition of $e_N(\x,\y,1)$ we have
$\mu_1^\x(s) \leq I_\rho s^{-\gamma} t^{\gamma-1}$, for
$s\in (0,t]$,
and, for $n\geq 2$, with a short calculation using Lemma \ref{lem:kconn} we get the recursive property
\begin{align}
\mu_n^\x(s) \leq  &\,\, I_\rho C^{n-1}\ell_0^{(\gaus{\frac{n}{2}}-1)(1-\gamma-\gamma/\delta)}\log(\tfrac{1}{\ell_0})^{n_{*}} s^{-\gamma}t^{-\gamma/\delta}\label{eq:mu_ineq_rec_1}\\
&+ \sum_{k=2}^{n-1} I_\rho C^{k-1}\ell_{n-k}^{(\gaus{\frac{k}{2}}-1)(1-\gamma-\gamma/\delta)}\log(\tfrac{1}{\ell_{n-k}})^{k_{*}} s^{-\gamma} \int\limits_{\ell_{n-k}}^t \mathd u \mu_{n-k}^{\x}(u)u^{-\gamma/\delta}\label{eq:mu_ineq_rec_2}\\
&+ I_\rho s^{-\gamma} \int\limits_{\ell_{n-1}}^t \mathd u \, \mu_{n-1}^\x(u) u^{\gamma-1},\quad \text{for}\ s\in (0,t],\label{eq:mu_ineq_rec_3}
\end{align}
where $C>0$ is the constant {from} Lemma~\ref{lem:kconn}. {Here, the first summand \eqref{eq:mu_ineq_rec_1} corresponds to the first summand of \eqref{eq:recdecomp_rv_N}, {i.e.} the number of paths with conductance~$n$ where the first vertex $\x$ and the last vertex with mark $s$ are the two most powerful vertices of the path. The summands \eqref{eq:mu_ineq_rec_2} and \eqref{eq:mu_ineq_rec_3} describe the second summand of \eqref{eq:recdecomp_rv_N}, where \eqref{eq:mu_ineq_rec_3} covers the case that the last vertex of a path is directly connected to the preceding most powerful vertex.}\bigskip \\
{Using the recursive inequality in \eqref{eq:mu_ineq_rec_1} - \eqref{eq:mu_ineq_rec_3} we now establish bounds for $\mu_n^{\x}$.} {To make the proof more transparent we continue working with a general sequence $(\ell_n)_{n\in\mathbb{N}_0}$ assuming only that it is at least exponentially decaying, i.e. {for any $b>0$ it holds} that $\ell_{n+2} < b \ell_n$. We choose $b>0$ small enough such that $\sum_{j=2}^\infty b^{(\gamma+\gamma/\delta-1)\frac{(j-3)(j-1)}{8}}$ converges. This choice is possible because in our regime  $\gamma+\gamma/\delta$ is larger than one.  
We denote the limit of the series by $c_{b}>1$. As we have already seen for the optimal path structure in Section \ref{subsec:outline}, the chosen sequence $(\ell_n)_{n\in\mathbb{N}_0}$ {decays much faster than any exponential rate} so that this assumption will not have any effect on the result.} 
Without loss of generality we may additionally assume $\ell_0<\frac{1}{e}$.\medskip

\begin{lemma}\label{lem:mu_bound}
Let $\x = (x,t)$ be a given vertex and let the sequence $(\ell_n)_{n\in\mathbb{N}_0}$ be at least exponentially decaying with $\ell_0<t\wedge \frac{1}{e}$. Then, there exists a constant $c$ such that, for $n\in \mathbb{N}$, we have
\begin{equation}
\mu_n^\x(s) \leq C_n s^{-\gamma},\quad \text{for}\ s\in (0,t], \label{eq:mu_bound}
\end{equation}
where 
\begin{equation}
C_{n+2} = c^2 \ell_{n}^{1-\gamma-\gamma/\delta}C_n + c \log\big(\tfrac{1}{\ell_{n+1}}\big)C_{n+1} \label{eq:c_n_def}
\end{equation}
and 
\[
C_1 = c\ell_0^{\gamma-1},\quad C_2 = c^2\ell_0^{-\gamma/\delta} + c\log(\tfrac{1}{\ell_1})C_1.\] 
\end{lemma}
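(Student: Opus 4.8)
The plan is a strong induction on $n$: assuming $\mu_m^{\x}(s)\le C_m s^{-\gamma}$ for all $m<n$ and all $s\in(0,t]$, we substitute this into the recursive inequality \eqref{eq:mu_ineq_rec_1}--\eqref{eq:mu_ineq_rec_3} and check the right-hand side is at most $C_n s^{-\gamma}$. The cases $n=1,2$ follow by direct substitution into the defining inequalities: for $n=1$ one uses $\ell_0<t$ and $\gamma-1<0$ so that $I_\rho t^{\gamma-1}\le I_\rho\ell_0^{\gamma-1}$, and for $n=2$ one uses $t^{-\gamma/\delta}\le\ell_0^{-\gamma/\delta}$, the bound $\int_{\ell_1}^t u^{-1}\,\mathd u\le\log(\tfrac1{\ell_1})$ (here $t\le1$) and the $n=1$ estimate for $\mu_1^{\x}$; both fit the stated $C_1,C_2$ as soon as $c$ exceeds a constant depending only on $I_\rho$ and the constant $C$ of Lemma~\ref{lem:kconn}.

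For the inductive step $n\ge3$ we insert the hypothesis and evaluate the inner integrals. The decisive point is that $\gamma>\tfrac{\delta}{\delta+1}$ means $\gamma+\tfrac\gamma\delta>1$, so $\int_{\ell_{n-k}}^t u^{-\gamma-\gamma/\delta}\,\mathd u\le\tfrac1{\gamma+\gamma/\delta-1}\ell_{n-k}^{1-\gamma-\gamma/\delta}$; hence \eqref{eq:mu_ineq_rec_2} contributes at most $\tfrac{I_\rho}{\gamma+\gamma/\delta-1}\sum_{k=2}^{n-1}C^{k-1}\ell_{n-k}^{\gaus{\frac k2}(1-\gamma-\gamma/\delta)}\log(\tfrac1{\ell_{n-k}})^{k_*}C_{n-k}\,s^{-\gamma}$ and \eqref{eq:mu_ineq_rec_3} at most $I_\rho\log(\tfrac1{\ell_{n-1}})C_{n-1}s^{-\gamma}$. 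Two of these are the ``diagonal'' contributions that rebuild \eqref{eq:c_n_def}: the $k=2$ summand is $\tfrac{I_\rho C}{\gamma+\gamma/\delta-1}\ell_{n-2}^{1-\gamma-\gamma/\delta}C_{n-2}s^{-\gamma}$ and the \eqref{eq:mu_ineq_rec_3}-term is $I_\rho\log(\tfrac1{\ell_{n-1}})C_{n-1}s^{-\gamma}$, and for $c$ large (say $c^2\ge2I_\rho C/(\gamma+\gamma/\delta-1)$ and $c\ge2I_\rho$) they are absorbed into a fraction of the first, resp.\ second, summand of \eqref{eq:c_n_def}.

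The heart of the argument is to control the remaining ``off-diagonal'' terms --- the line \eqref{eq:mu_ineq_rec_1} and the summands $k=3,\dots,n-1$ of \eqref{eq:mu_ineq_rec_2} --- with the unused parts of \eqref{eq:c_n_def}. For an even $k=2j+2$ we iterate $C_m\ge c^2\ell_{m-2}^{1-\gamma-\gamma/\delta}C_{m-2}$ (valid for $m\ge3$) $j$ times, obtaining $C_{n-k}\le c^{-2j}\big(\prod_{i=1}^{j}\ell_{n-2-2i}^{\gamma+\gamma/\delta-1}\big)C_{n-2}$; the factor $\ell_{n-k}^{\gamma+\gamma/\delta-1}$ from this product cancels one power of $\ell_{n-k}$ in $\ell_{n-k}^{\gaus{\frac k2}(1-\gamma-\gamma/\delta)}$, and then the at-least-exponential decay $\ell_{m+2}<b\ell_m$ --- used inside the remaining product and to pass from $\ell_{n-k}$ to $\ell_{n-2}$ --- bounds what is left by $b^{(\gamma+\gamma/\delta-1)j(j+1)/2}\ell_{n-2}^{1-\gamma-\gamma/\delta}$. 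So the $k$-th summand is $\le\tfrac{I_\rho C^{2j+1}}{(\gamma+\gamma/\delta-1)\,c^{2j}}b^{(\gamma+\gamma/\delta-1)j(j+1)/2}\ell_{n-2}^{1-\gamma-\gamma/\delta}C_{n-2}s^{-\gamma}$, and summing over $j\ge1$ gives exactly a series of the kind whose convergence (to $c_b$) was arranged before the statement; since the quadratic power of $b$ beats the geometric factor $(C/c)^{2j}$, a last enlargement of $c$ makes the total at most a small multiple of $\ell_{n-2}^{1-\gamma-\gamma/\delta}C_{n-2}s^{-\gamma}$. The odd summands $k=2j+3$ are handled the same way, now unrolling $C_{n-1}\ge c^2\ell_{n-3}^{1-\gamma-\gamma/\delta}C_{n-3}$ and keeping a $\log(\tfrac1{\ell_{n-1}})$-factor (which dominates $\log(\tfrac1{\ell_{n-k}})$ because $\ell_{n-k}>\ell_{n-1}$), so they land inside the second summand of \eqref{eq:c_n_def}. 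Finally \eqref{eq:mu_ineq_rec_1} is negligible: unrolling \eqref{eq:c_n_def} down to the base case and using $\ell_{n-2i}\le\ell_0$ shows that the first summand of \eqref{eq:c_n_def} exceeds $c^{n}\ell_0^{-\gamma/\delta}\ell_0^{(\gaus{\frac n2}-1)(1-\gamma-\gamma/\delta)}$ (and, for $n$ odd, the second summand carries a $\log\tfrac1{\ell_0}$ matching the one in \eqref{eq:mu_ineq_rec_1}), which beats the right-hand side of \eqref{eq:mu_ineq_rec_1} once $c>C$ is large. Collecting all pieces, $\mu_n^{\x}(s)\le C_n s^{-\gamma}$ holds for a single $c$ depending only on $d,\gamma,\delta,\kappa$ and $b$. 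The real obstacle is precisely this off-diagonal bookkeeping: the geometric prefactors $C^{k-1}$, the parity-dependent logarithms $\log(\tfrac1{\ell_{n-k}})^{k_*}$ (controlled, as in Lemma~\ref{lem:kconn}, via $\log(\tfrac1\ell)^2<\ell^{1-\gamma-\gamma/\delta}$), the powers of $\ell$ at many different indices, and the powers of $c$ from the unrolling all have to be aligned so that the quadratic-in-$j$ power of $b$ from nested applications of $\ell_{m+2}<b\ell_m$ wins; once the terms are organised in this way, each individual comparison is elementary.
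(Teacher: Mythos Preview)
Your argument is correct and follows essentially the same route as the paper's proof: strong induction on $n$, direct verification of the base cases, substitution of the inductive hypothesis into the recursive inequality \eqref{eq:mu_ineq_rec_1}--\eqref{eq:mu_ineq_rec_3}, and then the key step of controlling the off-diagonal contributions by unrolling the one-sided bound $C_{m}\le c^{-2}\ell_{m}^{\gamma+\gamma/\delta-1}C_{m+2}$ and exploiting the at-least-exponential decay $\ell_{m+2}<b\ell_m$ to produce powers of $b$ that are quadratic in the summation index and hence summable (this is precisely the constant $c_b$ set up before the lemma). The only organisational difference is that the paper collects all terms from \eqref{eq:mu_ineq_rec_1} and \eqref{eq:mu_ineq_rec_2} into a single inequality \eqref{eq:exponential_c_n_bound} and absorbs them into the first summand of \eqref{eq:c_n_def}, whereas you split by parity and route the odd-$k$ terms (carrying the extra logarithm) into the second summand; both bookkeepings are equivalent and lead to the same choice of $c$ depending only on $d,\gamma,\delta,\kappa$ and $b$.
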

\begin{proof}
We choose the constant $c>0$ such that it is larger than $\frac{I_{\rho}c_b}{(\gamma + \gamma/\delta-1)\wedge 1}$ and larger than the constant $C$ {from} Lemma \ref{lem:kconn}. Since this also implies that $c>I_{\rho}$, by the definition of $\mu_1^\x$ we have
\[
\mu_1^\x(s) = I_{\rho} s^{-\gamma}t^{\gamma-1} \leq c\ell_0^{\gamma-1}s^{-\gamma} = C_1s^{-\gamma}\quad \text{for}\ s\in (0,t].
\]
For $n=2$, the recursive inequality for $\mu_2^\x$ yields
\[
\mu_2^\x(s) \leq I_\rho C s^{-\gamma}t^{-\gamma/\delta} + I_\rho s^{-\gamma} \int\limits_{\ell_1}^t \mathd u \, \mu_1^\x(u)u^{\gamma-1}\quad \text{for}\ s\in (0,t].
\]
Using the already established bound for $n=1$ we have
\begin{align*}
\mu_2^\x(s) &\leq c^2\ell_0^{-\gamma/\delta} s^{-\gamma} + I_\rho s^{-\gamma} \int\limits_{\ell_1}^t \mathd u C_1 u^{-1}\\
&\leq c^2\ell_0^{-\gamma/\delta} s^{-\gamma} + c\log(\tfrac{1}{\ell_1})C_1 s^{-\gamma}
 =: C_2 s^{-\gamma}\quad \text{for}\ s\in(0,t].
\end{align*}
Now let $n\geq 3$ and we assume that \eqref{eq:mu_bound} holds for all $\tilde{n}\leq n-1$. Then, using the already established bounds and the recursive inequality property we have
\begin{align*}
\mu_n^\x(s) & \leq I_\rho C^{n-1}\ell_0^{(\gaus{\frac{n}{2}}-1)(1-\gamma-\gamma/\delta)}\log(\tfrac{1}{\ell_0})^{n_{*}} s^{-\gamma}t^{-\gamma/\delta}\\
& \phantom{xxxx} + \sum_{k=2}^{n-1} I_\rho C^{k-1}\ell_{n-k}^{(\gaus{\frac{k}{2}}-1)(1-\gamma-\gamma/\delta)}\log(\tfrac{1}{\ell_{n-k}})^{k_{*}} s^{-\gamma} \int\limits_{\ell_{n-k}}^t \mathd u C_{n-k}u^{-\gamma-\gamma/\delta}\\
& \phantom{xxxx} + I_\rho s^{-\gamma} \int\limits_{\ell_n-1}^t \mathd u \, C_{n-1} u^{-1}\\
&\leq I_\rho C^{n-1}\ell_0^{(\gaus{\frac{n}{2}}-1)(1-\gamma-\gamma/\delta)}\log(\tfrac{1}{\ell_0})^{n_{*}} s^{-\gamma}t^{-\gamma/\delta}\\
&\phantom{xxxx} + \sum_{k=2}^{n-1} \frac{I_\rho}{\gamma+\gamma/\delta-1} C^{k-1}\ell_{n-k}^{(\gaus{\frac{k}{2}})(1-\gamma-\gamma/\delta)}\log(\tfrac{1}{\ell_{n-k}})^{k_{*}}C_{n-k} s^{-\gamma}\\
& \phantom{xxxx} + I_\rho C_{n-1} \log(\tfrac{1}{\ell_{n-1}}) s^{-\gamma}\quad \text{for}\ s\in (0,1].
\end{align*}
Assume for the moment that
\begin{align}
\begin{split}
\sum_{k=2}^{n-1} & c^{k-1} C_{n-k}\ell_{n-k}^{(\gaus{\frac{k}{2}})(1-\gamma-\gamma/\delta)}\log(\tfrac{1}{\ell_{n-k}})^{k_{*}} + c^{n-1}\ell_{0}^{(\gaus{\frac{n}{2}}-1)(1-\gamma-\gamma/\delta)}\ell_0^{-\gamma/\delta}\log(\tfrac{1}{\ell_{0}})^{n_{*}} \\ 
&\leq c_b cC_{n-2}\ell_{n-2}^{1-\gamma-\gamma/\delta} \label{eq:exponential_c_n_bound} 
\end{split}
\end{align}
holds. Then, as $c>C$, the term  $\mu_n^\x(s)$ can be further bounded by
\[
\tfrac{I_\rho c_b}{(\gamma+\gamma/\delta-1)\wedge 1} c C_{n-2}\ell_{n-2}^{1-\gamma-\gamma/\delta} s^{-\gamma} + I_\rho C_{n-1} \log(\tfrac{1}{\ell_{n-1}}) s^{-\gamma},
\]
which by \eqref{eq:c_n_def} is smaller than $C_n s^{-\gamma}$ for $s\in (0,t]$. Hence, by induction the stated inequality holds for all $n\in \mathbb{N}$.\smallskip

It remains to show that \eqref{eq:exponential_c_n_bound} holds. If $k$ is even, a {repeated} application of~\eqref{eq:c_n_def} and $\ell_{n+2} <b\ell_n$ yields
\[
c^{k-1} C_{n-k}\ell_{n-k}^{(\gaus{\frac{k}{2}})(1-\gamma-\gamma/\delta)}\log(\tfrac{1}{\ell_{n-k}})^{k_{*}} \leq cC_{n-2} \ell_{n-2}^{1-\gamma-\gamma/\delta} b^{(\gamma+\gamma/\delta-1)(\sum_{j=0}^{\frac{k-2}{2}}j)}.
\]
If $k$ is odd a similar calculation leads to 
\[
c^{k-1} C_{n-k}\ell_{n-k}^{(\gaus{\frac{k}{2}})(1-\gamma-\gamma/\delta)}\log(\tfrac{1}{\ell_{n-k}})^{k_{*}} \leq cC_{n-2} \ell_{n-2}^{1-\gamma-\gamma/\delta} b^{(\gamma+\gamma/\delta-1)(\sum_{j=0}^{\frac{k-3}{2}}j)}.
\]
{Distinguishing whether n is even or odd, the second term of \eqref{eq:exponential_c_n_bound} can be bounded in a similar way and so the whole expression} can be bounded by
\[
cC_{n-2}\ell_{n-2}^{1-\gamma-\gamma/\delta} \left(\sum_{\substack{k=2\\\text{k even}}}^n b^{(\gamma+\gamma/\delta-1)\frac{(k-2)k}{8}} + \sum_{\substack{k=3\\\text{k odd}}}^n b^{(\gamma+\gamma/\delta-1)\frac{(k-3)(k-1)}{8}}\right),
\]
where the two sums can be bounded by $c_b$ which implies that \eqref{eq:exponential_c_n_bound} holds.
\end{proof}

Notice that, as stated in Section \ref{subsec:outline}, the inequality \eqref{eq:exponential_c_n_bound} shows us that the major contribution to the expected value of $N(\x,\y,n)$ comes from the paths where the two most powerful vertices are connected via a single connector. To see why, notice that the right-hand side of \eqref{eq:exponential_c_n_bound} is, up to a constant, the same as the $k=2$ term of the left-hand side. In fact, Lemma \ref{lem:mu_bound} shows that the dominant class of possible paths is the one described in Section \ref{subsec:outline}.\medskip

We are now ready to bound the probability of the event $\tilde{A}^{(\x)}_n$, i.e.~the event that there exists a path of conductance $n$ where the final vertex is the first and only one which has a mark smaller than the corresponding $\ell_n$. In particular the final vertex is the most powerful vertex of the path. 
By  Mecke's equation, we have
\[
\P_{\x}(\tilde{A}_n^{(\x)}) \leq \int\limits_{\mathbb{R}^d \times (0,\ell_n]} \mathd \y \, \E_{\x,\y}N(\x,\y,n).
\]
Hence, Fubini's theorem and Lemma~\ref{lem:mu_bound} yield
\[
\P_{\x}(\tilde{A}_n^{(\x)}) \leq \int\limits_0^{\ell_n} \mathd s \mu_n^\x(s) \leq \frac{1}{1-\gamma} \ell_n^{1-\gamma}C_n.
\]
As in Section \ref{subsec:outline}, with $\ell_0 < t\wedge \frac{1}{e}$ we choose the sequence $(\ell_n)_{n\in \mathbb{N}_0}$ for $\varepsilon>0$, such that 
\begin{equation}
\frac{1}{1-\gamma}C_n\ell_n^{1-\gamma} = \frac{\varepsilon}{\pi^2 n^2}, \label{eq:truncdef_gen}
\end{equation}
and we have
\[
\sum_{n=1}^\Delta \P_{\x}(A^{(\x)}_n) \leq \sum_{n=1}^\Delta \P_{\x}(\tilde{A}^{(\x)}_n) \leq \sum_{n=1}^\Delta \frac{1}{1-\gamma}C_n\ell_n^{1-\gamma} \leq \sum_{n=1}^\infty \frac{\varepsilon}{\pi^2 n^2} = \frac{\varepsilon}{6}.
\]
Since $C_n$ is defined recursively, we can obtain a recursive representation of the sequence $(\ell_n)_{n\in \mathbb{N}_0}$. Let $\eta_n := \ell_n^{-1}$ for $n\in \mathbb{N}_0$. Then, we have
\begin{align}
\begin{split}\eta_{n+2}^{1-\gamma}  &= \frac{\pi^2 (n+2)^2}{3\varepsilon}\frac{1}{1-\gamma}C_{n+2}\\
&= \frac{\pi^2 (n+2)^2}{3\varepsilon}\frac{1}{1-\gamma}\left[c^2 \ell_{n}^{1-\gamma-\gamma/\delta}C_n + c\log\left(\frac{1}{\ell_{n+1}}\right)C_{n+1}\right]\\
& = \frac{(n+2)^2}{n^2} c^2\eta_n^{\gamma/\delta} + \frac{(n+1)^2}{n^2} c\log(\eta_{n+1})\eta_{n+1}^{1-\gamma}. \label{eq:truncrec_gen}
\end{split}
\end{align}
Hence, there exists a different constant $c>0$ such that $\eta_{n+2}^{1-\gamma} \leq c\eta_n^{\gamma/\delta} + c\log(\eta_{n+1})\eta_{n+1}^{1-\gamma}$. By induction, we conclude that there exist $b>0$ and $B>0$ such that
\begin{equation}
\eta_n \leq b\exp\left(B\left(\frac{\gamma}{\delta(1-\gamma)}\right)^{n/2}\right),\label{eq:truncorder_gen}
\end{equation}
and thus the rate of decay of $(\ell_n)_{n\in \mathbb{N}_0}$ is faster than exponential.

\paragraph{{Probability bounds for good paths.}}
We now proceed to establish a bound on the last summand \smash{$\sum_{n=1}^{2\Delta} \P_{\x,\y}(B^{_{(\x,\y)}}_n)$} of \eqref{eq:truncmombound}. To do so we consider the original graph~$\mathscr{G}$. Recall that $B^{_{(\x,\y)}}_n$ is the event that there exists a good path of length~$n$ between $\x$ and $\y$. This can be bounded by the union of all possible good paths given by the vertices of the Poisson point process, i.e. 
\[
\P_{\x,\y}(B^{(\x,\y)}_n) = \P_{\x,\y}\left(\bigcup_{\substack{\x_1,\ldots,\x_{n-1}\in \mathscr{G}\\ (\x_0,\ldots,\x_{n})\ \text{good}}}^{\neq} \set{\x_0 \sim \x_1 \sim \ldots \sim \x_{n-1} \sim \x_n}\right),
\]
where $\x = \x_0$, $\y = \x_n$, $\bigcup^{\neq}$ denotes the union across all possible sets of pairwise distinct vertices $\x_0,\ldots,\x_n$ of the Poisson process. By Mecke's equation the right-hand side can be bounded from above by 
\begin{align*}
\int\limits_{\mathclap{\mathbb{R}^d\times (\ell_1,1]}} \mathd \x_1 \cdots \int\limits_{\mathclap{\mathbb{R}^d\times(\ell_{\gaus{\frac{n}{2}}},1]}} \mathd \x_{\gaus{\frac{n}{2}}}\cdots \int\limits_{\mathclap{\mathbb{R}^d\times (\ell_1,1]}} \mathd \x_{n-1} \,
 \mathbb{P}_{\mathbf{x},\x_1, \ldots, \x_{n-1}, \mathbf{y}}\set{\x \sim \x_1\sim \ldots\sim \x_{n-1} \sim \y}.
\end{align*}
The following lemma reduces this bound to a non-spatial problem for paths of ``reasonable'' length which only depends on the marks of $\x_1,\ldots,\x_{n-1}$ but not on their location. This allows us to use a similar strategy as the one used by Dereich et al.\ in~\cite{DerMM2012}, where lower bounds for the typical distance of non-spatial preferential attachment models are established.\smallskip
\begin{lemma}\label{lem:spatial_integration}
For given vertices $\x=(x,t)$ and $\y=(y,s)$, let $\Delta \leq c_\varepsilon\abs{x-y}^\epsilon$ for some $1>\varepsilon>0$ and $c_{\varepsilon}>0$. Then, there exist constants $a>0$ and $\tilde{\kappa} > 0$ such that, for $n\leq \Delta$, we have 
\begin{align*}
\int\limits_{(\mathbb{R}^d)^{n-1}}\bigotimes_{i=1}^{n-1} \mathd x_i\mathbb{P}_{\mathbf{x},\x_1, \ldots, \x_{n-1}, \mathbf{y}} & \set{\x \sim \x_1\sim \ldots\sim \x_{n-1} \sim \y}\\
& \leq \abs{x-y}^{-a} \prod_{k=1}^{n} \tilde{\kappa} (t_{k-1}\wedge t_k)^{-\gamma}(t_{k-1}\vee t_k)^{\gamma-1},
\end{align*}
where $t_0=t$ resp. $t_n=s$ are the marks of $\x$ resp. $\y$ and $\x_i = (x_i, t_i)$ for $i=1,\ldots,n-1$.
\end{lemma}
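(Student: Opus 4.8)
The plan is to first invoke Assumption~\ref{ass:main1} to replace the conditional probability of the path event by a deterministic product of kernels, and then to carry out the $(n-1)$-fold spatial integration by hand, extracting the factor $\abs{x-y}^{-a}$ from the (necessarily long) edge on which the path bridges most of the Euclidean gap.

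\emph{Reduction to a deterministic integral.} Apply Assumption~\ref{ass:main1} with $I=\{(\x_0,\x_1),\dots,(\x_{n-1},\x_n)\}$, in which every vertex occurs at most twice. Since a probability never exceeds $1$, applying the assumption to each subset of $I$ in turn shows
\[
\mathbb{P}_{\x,\x_1,\dots,\x_{n-1},\y}\{\x\sim\x_1\sim\dots\sim\x_{n-1}\sim\y\}\;\le\;\prod_{k=1}^{n}\rho\big(\kappa^{-1/\delta}q_k\,\abs{x_{k-1}-x_k}^d\big),
\]
where $q_k:=(t_{k-1}\wedge t_k)^{\gamma}(t_{k-1}\vee t_k)^{1-\gamma}\in(0,1]$, $x_0=x$, $x_n=y$, and $\rho(u)=1\wedge u^{-\delta}$. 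Note that $q_k^{-1}$ is exactly the mark factor on the right-hand side of the lemma, and that the substitution $w=\kappa^{-1/(d\delta)}q^{1/d}z$ gives $\int_{\mathbb{R}^d}\rho(\kappa^{-1/\delta}q\abs z^d)\,\mathd z=I_\rho\,q^{-1}$. Writing $R:=\abs{x-y}$, it thus suffices to bound $J_n:=\int_{(\mathbb{R}^d)^{n-1}}\prod_{k=1}^{n}\rho(\kappa^{-1/\delta}q_k\abs{x_{k-1}-x_k}^d)\,\bigotimes_{i=1}^{n-1}\mathd x_i$ by $\tilde\kappa^{\,n}R^{-a}\prod_{k=1}^{n}q_k^{-1}$.

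\emph{The longest edge and a dichotomy.} Since $\sum_{k=1}^{n}\abs{x_{k-1}-x_k}\ge R$, at least one edge has length $\ge R/n$, so $J_n\le\sum_{j=1}^{n}J_n^{(j)}$ where $J_n^{(j)}$ is the same integral with the extra factor $\mathbf 1\{\abs{x_{j-1}-x_j}\ge R/n\}$. Fix $j$ and fix a small $\alpha_0\in(0,d(\delta-1)\wedge d)$; using $\delta>1$ the kernel $z\mapsto\abs z^{\alpha_0}\rho(\kappa^{-1/\delta}q\abs z^d)$ is integrable with $\int_{\mathbb{R}^d}\abs z^{\alpha_0}\rho(\kappa^{-1/\delta}q\abs z^d)\,\mathd z=C_{\alpha_0}q^{-1-\alpha_0/d}$. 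If $q_j^{-1/d}\le R^{(1-\varepsilon)/2}$, use $\mathbf 1\{\abs{x_{j-1}-x_j}\ge R/n\}\le(n\abs{x_{j-1}-x_j}/R)^{\alpha_0}$ and integrate out $x_1,\dots,x_{n-1}$ one after another, each against precisely one kernel: one of the two end kernels (whichever is not the $j$-th one; possible since $n\ge2$) is bounded trivially by $1$, the $j$-th kernel contributes $C_{\alpha_0}q_j^{-1-\alpha_0/d}$, and each of the remaining $n-2$ kernels contributes $I_\rho q_k^{-1}$. Using $q_b\le1$ for the bounded endpoint this yields
\[
J_n^{(j)}\le C_{\alpha_0}I_\rho^{\,n-2}\Big(\tfrac{n\,q_j^{-1/d}}{R}\Big)^{\alpha_0}\prod_{k=1}^{n}q_k^{-1}\le C_{\alpha_0}I_\rho^{\,n-2}c_\varepsilon^{\alpha_0}R^{-\alpha_0(1-\varepsilon)/2}\prod_{k=1}^{n}q_k^{-1},
\]
since $n\le\Delta\le c_\varepsilon R^{\varepsilon}$ and $q_j^{-1/d}\le R^{(1-\varepsilon)/2}$. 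If instead $q_j^{-1/d}>R^{(1-\varepsilon)/2}$, simply drop the $j$-th kernel (which is $\le1$) and integrate the remaining $n-1$ kernels against $x_1,\dots,x_{n-1}$, giving $J_n^{(j)}\le I_\rho^{\,n-1}\prod_{k\ne j}q_k^{-1}=I_\rho^{\,n-1}q_j\prod_k q_k^{-1}\le I_\rho^{\,n-1}R^{-d(1-\varepsilon)/2}\prod_k q_k^{-1}$. Either way $J_n^{(j)}\le D^{\,n}R^{-a}\prod_k q_k^{-1}$ with $a:=\tfrac{1-\varepsilon}{2}(\alpha_0\wedge d)>0$ and $D$ depending only on $\varepsilon,c_\varepsilon,d,\delta,\kappa$. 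Summing over $j\le n$ and absorbing $n\le2^{n}$ gives the claim, with $\tilde\kappa$ a suitable multiple of $D$, enlarged so that the trivial case $n=1$ (immediate from Assumption~\ref{ass:main1}, using $a<d$) is also covered.

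\emph{Main obstacle.} The delicate point is that one cannot extract decay in $R$ from the long edge in a uniform way: if the path is forced through very powerful vertices, the intrinsic scale $q_j^{-1/d}$ attached to the long edge may exceed its length $R/n$, and the kernel then carries no usable decay. The dichotomy above resolves this — in that regime the right-hand side of the lemma already contains the huge factor $q_j^{-1}\ge R^{d(1-\varepsilon)/2}$, so a crude bound suffices — while the bookkeeping of the iterated integration (each interior vertex must be integrated against exactly one kernel so that every integral collapses to the one-dimensional scaling identity) together with the integrability of the weighted kernel, which is precisely where $\delta>1$ enters, constitutes the remaining technical work.
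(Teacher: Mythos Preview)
Your proof is correct, but it is more elaborate than necessary. The paper follows the same skeleton---Assumption~\ref{ass:main1}, pigeonhole on the long edge, then integrate out the locations---but handles the long edge in a single stroke: once some edge has length at least $c_\varepsilon^{-1}\abs{x-y}^{1-\varepsilon}$, monotonicity of $\rho$ followed by the crude bound $\rho(u)\le u^{-1}$ (valid for all $u>0$ since $\delta\ge1$) gives directly
\[
\rho\big(\kappa^{-1/\delta}q_j\abs{x_{j-1}-x_j}^{d}\big)\le c_\varepsilon^{d}\kappa^{1/\delta}\,q_j^{-1}\,\abs{x-y}^{-d(1-\varepsilon)},
\]
which already separates the mark factor $q_j^{-1}$ from the spatial decay. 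One then integrates the remaining $n-1$ kernels exactly as in your second case, yielding $a=d(1-\varepsilon)$ after absorbing the sum over $j$ into $\tilde\kappa$. Your dichotomy and weighted-kernel argument are therefore unnecessary: the ``main obstacle'' you describe---that the kernel carries no usable decay when $q_j^{-1/d}$ exceeds the edge length---does not arise, because $\rho(u)\le u^{-1}$ delivers the factor $q_j^{-1}$ on the right-hand side regardless of whether $u$ is large or small. What your route does buy is a self-contained treatment that never bounds a kernel below its natural scale, at the cost of a smaller exponent $a=\alpha_0(1-\varepsilon)/2$ and some extra bookkeeping; since only $a>0$ is needed, both approaches suffice.
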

\begin{remark}\label{rem:spatial_integration}
The constants $a$ and $\tilde{\kappa}$ of Lemma \ref{lem:spatial_integration} depend on the choice of $\varepsilon$ and $c_\varepsilon$. But for $\Delta = O(\log\abs{x-y})$, for any $\epsilon>0$ there exists a $c_\epsilon>0$, such that, for $\abs{x-y}$ large enough, we have $\Delta \leq c_\varepsilon\abs{x-y}^\epsilon$. Thus, if $\abs{x-y}$ is large enough, the choice of $a$ and $\tilde{\kappa}$ does not depend on $\abs{x-y}$.
\end{remark}
\begin{proof}Let $\set{\x, \x_1, \ldots, \x_{n-1}, \y}$ be a set of given vertices. By Assumption \ref{ass:main1} we have 
\begin{align*}
\int\limits_{(\mathbb{R}^d)^{n-1}}\bigotimes_{i=1}^{n-1} \mathd x_i & \mathbb{P}_{\mathbf{x},\x_1, \ldots, \x_{n-1}, \mathbf{y}} \set{\x \sim \x_1\sim \ldots\sim \x_{n-1} \sim \y}\\
&\leq \int\limits_{(\mathbb{R}^d)^{n-1}}\bigotimes_{i=1}^{n-1} \mathd x_i \prod_{i=1}^{n}\rho\big(\kappa^{-1/\delta} (t_{i-1}\wedge t_i)^{\gamma}(t_{i-1}\vee t_i)^{1-\gamma} \abs{x_{i-1}-x_i}^{d}\big).
\end{align*}
As $n\leq c_\varepsilon\abs{x-y}^\epsilon$, no matter the choice of vertices, there must exist at least one edge between two vertices $\x_{k-1} = (x_{k-1},t_{k-1})$ and $\x_k = (x_k,t_k)$ with $\abs{x_{k-1}-x_k} \geq c_\varepsilon^{-1} \abs{x-y}^{1-\varepsilon}$. Hence, the expression above can be further bounded by
\begin{align*}
&\sum_{k=1}^{n-1}\int\limits_{(\mathbb{R}^d)^{n-1}}\bigotimes_{i=1}^{n-1} \mathd x_i \rho\big(c_\varepsilon^{-d}\kappa^{-1/\delta} (t_{k-1}\wedge t_k)^{\gamma}(t_{k-1}\vee t_k)^{1-\gamma} \abs{x-y}^{d(1-\varepsilon)}\big)\\
&\phantom{\sum_{k=1}^{n-1}\int\limits_{(\mathbb{R}^d)^{n-1}}\bigotimes_{i=1}^{n-1} \mathd \x_i}\times \prod_{\substack{i=1\\ i\neq k}}^{n}\rho\big(\kappa^{-1/\delta} (t_{i-1}\wedge t_i)^{\gamma}(t_{i-1}\vee t_i)^{1-\gamma} \abs{x_{i-1}-x_i}^{d}\big)\\
&\leq \sum_{k=1}^{n-1}\rho\big(c_\varepsilon^{-d}\kappa^{-1/\delta} (t_{k-1}\wedge t_k)^{\gamma}(t_{k-1}\vee t_k)^{1-\gamma} \abs{x-y}^{d(1-\varepsilon)}\big)\\
& \phantom{guggemolunnebbes} \times\prod_{\substack{i=1\\ i\neq k}}^{n}I_\rho (t_{i-1}\wedge t_i)^{-\gamma}(t_{i-1}\vee t_i)^{\gamma-1},
\end{align*}
where the last inequality is achieved by integration over the location of the vertices. 
We choose $\tilde{\kappa}>2c_\varepsilon^{d}\kappa^{1/\delta}\vee 2I_\rho$. Since $\delta>1$, the term
$$\rho\big(c_\varepsilon^{-d}\kappa^{-1/\delta} (t_{k-1}\wedge t_k)^{\gamma}(t_{k-1}\vee t_k)^{1-\gamma} \abs{x-y}^{d(1-\varepsilon)}\big)$$ can be bounded by $c_\varepsilon^{d}\kappa^{1/\delta} (t_{k-1}\wedge t_k)^{-\gamma}(t_{k-1}\vee t_k)^{\gamma-1} \abs{x-y}^{-d(1-\varepsilon)}$ and therefore there exists a constant $a>0$ such that we have
\begin{align*}
&\int\limits_{(\mathbb{R}^d)^{n-1}}\bigotimes_{i=1}^{n-1} \mathd x_i\mathbb{P}_{\mathbf{x},\x_1, \ldots, \x_{n-1}, \mathbf{y}}\set{\x \sim \x_1\sim \ldots\sim \x_{n-1} \sim \y}\\
& \leq \abs{x-y}^{-a} \prod_{k=1}^{n} \tilde{\kappa} (t_{k-1}\wedge t_k)^{-\gamma}(t_{k-1}\vee t_k)^{\gamma-1}.
\end{align*}
\\[-10mm]
\end{proof}

\noindent
By Remark~\ref{rem:spatial_integration}, with Lemma \ref{lem:spatial_integration} and Fubini's theorem we obtain
\[
\P_{\x,\y}(B^{(\x,\y)}_n) \leq \abs{x-y}^{-a} \int\limits_{\ell_1}^1 \mathd t_1 \cdots \!\!\!\int\limits_{\ell_{\gaus{\frac{n}{2}}}}^1 \mathd t_{\gaus{\frac{n}{2}}}\cdots \!\!\int\limits_{\ell_1}^1 \mathd t_{n-1}\prod_{k=1}^{n} \tilde{\kappa} (t_{k-1}\wedge t_k)^{-\gamma}(t_{k-1}\vee t_k)^{\gamma-1},
\]
where $\x = (x,t_0)$ and $\y = (y,t_n)$. 
We define, 
\begin{align}
\nu_n^\x(s) =  \int\limits_{\ell_1}^1 \mathd t_1 \cdots \!\!\! & \int\limits_{\ell_{n-1}}^1 \mathd t_{n-1}  \notag\\ & \tilde{\kappa} (t_{n-1}\wedge s)^{-\gamma}(t_{n-1}\vee s)^{\gamma-1}\prod_{k=1}^{n-1} \tilde{\kappa} (t_{k-1}\wedge t_k)^{-\gamma}(t_{k-1}\vee t_k)^{\gamma-1} \label{eq:non-spatial_mu_def}
\end{align}
and set $\nu_0^\x(s) = \delta_0(t-s)$. Then, the inequality above can be rewritten as
\[
\P_{\x,\y}(B^{(\x,\y)}_n) \leq \abs{x-y}^{-a} \int\limits_{\ell_{\gaus{\frac{n}{2}}}}^1 \mathd s \nu_{\gaus{\frac{n}{2}}}^\x(s)\nu_{n-\gaus{\frac{n}{2}}}^\y(s).
\]
Note that as defined, $\nu_n^\x(s)$ can be written recursively as
\begin{equation}
\nu_n^\x(s) = \int\limits_{\ell_{n-1}}^1 \mathd u \, \nu_{n-1}^\x(u)\tilde{\kappa} (u\wedge s)^{-\gamma}(u\vee s)^{\gamma-1}. \label{eq:non-spatial_mu_rec}
\end{equation}
This allows us to establish an upper bound for $\nu_n^\x(s)$ analogous to the {non-spatial} case in \cite{DerMM2012}. 
The following lemma is a corollary of \cite[Lemma 1]{DerMM2012}.

\begin{lemma}\label{lem:non-spatial_mu_bound}
Let $(\ell_n)_{n\in \mathbb{N}}$ be a given non-increasing sequence and $\nu_n^\x(s)$ be as defined in \eqref{eq:non-spatial_mu_def}, where $\x = (x,t)$ and $s\in (0,1)$. Then, there exists a constant $c>0$ such that, for all $n\in \mathbb{N}$,  
\begin{equation}
\nu_n^\x(s) \leq \alpha_n s^{-\gamma} + \beta_n s^{\gamma-1}, \label{eq:non-spatial_mu_bound}
\end{equation}
where 
\begin{align}
\begin{split}\alpha_{n+1} &= c \big(\alpha_n \log\big(\tfrac{1}{\ell_n}\big) + \beta_n\big)\\
\beta_{n+1} &= c \big(\alpha_n \ell^{1-2\gamma} + \beta_n\log\big(\tfrac{1}{\ell_n}\big)\big) \label{eq:alp_bet_def}
\end{split}
\end{align}
and $\alpha_1 = \tilde{\kappa} t^{\gamma-1}$, $\beta_1 = \tilde{\kappa} t^{-\gamma}$.
\end{lemma}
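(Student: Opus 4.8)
The plan is to derive \eqref{eq:non-spatial_mu_bound} by induction on $n$ from the recursive representation \eqref{eq:non-spatial_mu_rec}, which reads
\[
\nu_n^\x(s) = \tilde{\kappa}\int_{\ell_{n-1}}^1 \nu_{n-1}^\x(u)\,(u\wedge s)^{-\gamma}(u\vee s)^{\gamma-1}\,\mathd u .
\]
Up to the constant $\tilde{\kappa}$ picked up at each step, this is exactly the iteration governing the non-spatial preferential attachment model analysed in \cite{DerMM2012}, and since $\tilde{\kappa}$ can be absorbed into the constant $c$ appearing in \eqref{eq:alp_bet_def}, the statement is an immediate corollary of \cite[Lemma~1]{DerMM2012}. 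I nevertheless indicate the short self-contained argument.

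For $n=1$ the definition \eqref{eq:non-spatial_mu_def} gives $\nu_1^\x(s)=\tilde{\kappa}(t\wedge s)^{-\gamma}(t\vee s)^{\gamma-1}$, which equals $\tilde{\kappa}t^{\gamma-1}s^{-\gamma}$ if $s\le t$ and $\tilde{\kappa}t^{-\gamma}s^{\gamma-1}$ if $s>t$, hence is at most $\alpha_1 s^{-\gamma}+\beta_1 s^{\gamma-1}$ with $\alpha_1=\tilde{\kappa}t^{\gamma-1}$, $\beta_1=\tilde{\kappa}t^{-\gamma}$. For the inductive step, assume $\nu_{n-1}^\x(u)\le\alpha_{n-1}u^{-\gamma}+\beta_{n-1}u^{\gamma-1}$, substitute this into the displayed recursion, and split the $u$-integral at $u=s$ (when $s\le\ell_{n-1}$ the kernel is $s^{-\gamma}u^{\gamma-1}$ on the whole range and one obtains directly an $s^{-\gamma}$ term, so assume $\ell_{n-1}<s<1$). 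On $\{u<s\}$ the kernel equals $u^{-\gamma}s^{\gamma-1}$ and on $\{u>s\}$ it equals $s^{-\gamma}u^{\gamma-1}$, so $\nu_n^\x(s)$ is bounded by
\[
\tilde{\kappa}\,s^{\gamma-1}\!\int_{\ell_{n-1}}^{s}\!\big(\alpha_{n-1}u^{-2\gamma}+\beta_{n-1}u^{-1}\big)\mathd u+\tilde{\kappa}\,s^{-\gamma}\!\int_{s}^{1}\!\big(\alpha_{n-1}u^{-1}+\beta_{n-1}u^{2\gamma-2}\big)\mathd u .
\]
All four integrals are elementary: $\int u^{-1}\,\mathd u$ contributes a factor $\log(1/\ell_{n-1})$, in the regime $\gamma>\tfrac12$ relevant here the integral $\int u^{-2\gamma}\,\mathd u$ contributes a factor of order $\ell_{n-1}^{1-2\gamma}$ and $\int u^{2\gamma-2}\,\mathd u$ a bounded factor, while the endpoint terms at $u=s$ are controlled using $\ell_{n-1}\le s\le 1$ (in particular $s^{1-\gamma}\le1$ and $s^{2-2\gamma}\le1$). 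Collecting the coefficients of $s^{-\gamma}$ and of $s^{\gamma-1}$ and taking $c$ larger than $\tilde{\kappa}$ times all the numerical constants arising gives exactly $\nu_n^\x(s)\le\alpha_n s^{-\gamma}+\beta_n s^{\gamma-1}$ with $\alpha_n,\beta_n$ as in \eqref{eq:alp_bet_def}, closing the induction.

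The computation itself is routine; the only point needing attention is the bookkeeping. One must check that a single constant $c$ serves in both recursions of \eqref{eq:alp_bet_def} and for all $n$ simultaneously — this works because every estimate on the elementary integrals above is uniform in $n$ and in the sequence $(\ell_k)$ — and one must decide whether the endpoint term $s^{1-2\gamma}$ coming from $\int_{\ell_{n-1}}^{s} u^{-2\gamma}\,\mathd u$ is grouped with $s^{-\gamma}$ or with $s^{\gamma-1}$, which is settled by the trivial bounds $s^{1-\gamma}\le1$, $s^{2-2\gamma}\le1$ for $s\in(0,1)$. As these details are already carried out in \cite{DerMM2012}, the lemma follows directly from \cite[Lemma~1]{DerMM2012}.
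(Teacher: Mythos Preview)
Your proof is correct and follows essentially the same inductive approach as the paper. The only cosmetic difference is that you split the $u$-integral at $u=s$ and then bound the endpoint contributions, whereas the paper avoids this case distinction by using the pointwise bound $(u\wedge s)^{-\gamma}(u\vee s)^{\gamma-1}\le s^{-\gamma}u^{\gamma-1}+u^{-\gamma}s^{\gamma-1}$ on the whole range $[\ell_{n-1},1]$; this yields the same four elementary integrals directly and makes the bookkeeping slightly cleaner.
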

\begin{proof}
For $n=1$, we have by \eqref{eq:non-spatial_mu_def} that
\[
\nu_1^\x(s) = \tilde{\kappa} (t \wedge s)^{-\gamma}(t \vee s)^{\gamma-1} = \mathbbm{1}_{\set{s\leq t}} \tilde{\kappa} s^{-\gamma}t^{\gamma-1} + \mathbbm{1}_{\set{s> t}} \tilde{\kappa} t^{-\gamma}s^{\gamma-1} \leq \alpha_1 s^{-\gamma} + \beta_1 s^{\gamma-1}.
\]
Assume \eqref{eq:non-spatial_mu_bound} holds for $n\in \mathbb{N}$. Then, by \eqref{eq:non-spatial_mu_rec}, we have that
\begin{align*}
\nu_{n+1}^\x(s) &= \int\limits_{\ell_{n}}^1 \mathd u \nu_{n}^\x(u)\tilde{\kappa} (u\wedge s)^{-\gamma}(u\vee s)^{\gamma-1}
\leq  \int\limits_{\ell_{n}}^1 \mathd u \nu_{n}^\x(u)\tilde{\kappa} \left(s^{-\gamma}u^{\gamma-1} + u^{-\gamma}s^{\gamma-1}\right)\\
\phantom{\nu_{n+1}^\x(s)}&\leq  \tilde{\kappa} s^{-\gamma}\int\limits_{\ell_{n}}^1 \mathd u \left(\alpha_n u^{-1} + \beta_n u^{2\gamma-2}\right) + \tilde{\kappa} s^{\gamma-1}\int\limits_{\ell_{n}}^1 \mathd u \left(\alpha_n u^{-2\gamma} + \beta_n u^{-1}\right)\\
&\leq  \tilde{\kappa} s^{-\gamma} \left(\alpha_n \log\left(\tfrac{1}{\ell_n}\right) + \beta_n \frac{1}{2\gamma-1}\right) + \tilde{\kappa} s^{\gamma-1}\left(\alpha_n \frac{1}{2\gamma-1}\ell_n^{1-2\gamma} + \beta_n\log\left(\tfrac{1}{\ell_n}\right)\right)\\
&\leq \alpha_{n+1}s^{-\gamma} + \beta_{n+1}s^{\gamma-1}.
\end{align*}
Hence, by induction \eqref{eq:non-spatial_mu_bound} holds for all $n\in \mathbb{N}$.
\end{proof}
Although Lemma \ref{lem:non-spatial_mu_bound} holds for an arbitrary sequence $(\ell_n)_{n\in \mathbb{N}}$, recall that we have chosen $(\ell_n)_{n\in \mathbb{N}}$ such that \eqref{eq:truncdef_gen} holds. This implies by \eqref{eq:truncrec_gen} that there exists a constant $c_1>0$ such that 
\begin{equation}
\eta_{n+2} \geq c_1 \eta_n^{\gamma/\delta(1-\gamma)}\ \text{for all}\ n\in \mathbb{N}_0, \label{eq:eta_recbound}
\end{equation}
where $\eta_n := \ell_n^{-1}$ as before. Additionally, notice that $(\alpha_n)_{n\in \mathbb{N}}$ and $(\beta_n)_{n\in \mathbb{N}}$ are non-decreasing sequences. By Lemma \ref{lem:non-spatial_mu_bound}, we have that
\begin{align*}
\sum_{n=1}^{2\Delta} \P_{\x,\y}(B^{(\x,\y)}_n) &\leq \abs{x-y}^{-a}\sum_{n=1}^{2\Delta} \int\limits_{\ell_{\gaus{\frac{n}{2}}}}^1 \mathd s \nu_{{\gaus{\frac{n}{2}}}}^\x(s)\nu_{{n-\gaus{\frac{n}{2}}}}^\y(s)\\
&\leq 2 \abs{x-y}^{-a}\sum_{n=1}^{\Delta} \int\limits_{\ell_{n}}^1 \mathd s \left(\alpha_n s^{-\gamma} + \beta_n s^{\gamma-1}\right)^2\\
&\leq \frac{4}{2\gamma-1} \abs{x-y}^{-a}\sum_{n=1}^{\Delta} \alpha_n^2 \ell_n^{1-2\gamma} + \beta_n^2.
\end{align*}
It follows from the definition of $(\alpha_n)_{n\in \mathbb{N}}$ and $(\beta_n)_{n\in \mathbb{N}}$ that $\beta_n \leq c^{-1}\alpha_{n+1}$ and $$\beta_n \leq c\big(\alpha_n \ell_n^{1-2\gamma} + \alpha_n\log\big(\tfrac{1}{\ell_n}\big)\big),$$ where the second summand on the right-hand side is bounded by a multiple of the first. Therefore, there exists a constant $c_2>0$ such that $\beta_n^2 \leq c_2 \alpha_{n+1}^2 \ell_{n+1}^{1-2\gamma}$. This and the monotonicity of the sequences $(\alpha_n)_{n\in \mathbb{N}}$ and $(\ell_n)_{n\in \mathbb{N}}$ gives that
\begin{equation}
\sum_{n=1}^{2\Delta} \P_{\x,\y}(B^{(\x,\y)}_n) \leq \frac{4(1+c_2)}{2\gamma-1} \abs{x-y}^{-a}\sum_{n=1}^{\Delta} \alpha_{n+1}^2 \ell_{n+1}^{1-2\gamma}. \label{eq:final_good_path_bound}
\end{equation}
Recall that the sequence $(C_n)_{n\in \mathbb{N}}$ from Lemma \ref{lem:mu_bound} is defined as 
\[
C_{n+2} = c^2 \ell_{n}^{1-\gamma-\gamma/\delta}C_n + c \log\left(\tfrac{1}{\ell_{n+1}}\right)C_{n+1}
\]
with $C_1 = c\ell_0^{\gamma-1}$ and $C_2 = c^2\ell_0^{-\gamma/\delta} + c\log(\tfrac{1}{\ell_1})C_1$. We  compare this sequence to $(\alpha_n)_{n\in \mathbb{N}}$ in order to bound~\eqref{eq:final_good_path_bound} further. By writing $\alpha_{n+2}$ in terms of $\alpha_n$ and $\beta_n$ we have that
\[
\alpha_{n+2} = c^2\left(\alpha_n\big(\ell_n^{1-2\gamma} + \log(\tfrac{1}{\ell_{n+1}})\log(\tfrac{1}{\ell_n})\big) + \beta_n\big(\log(\tfrac{1}{\ell_{n+1}}) + \log(\tfrac{1}{\ell_n})\big)\right).
\]
As all summands on the right-hand side are bounded by a multiple of $\alpha_n \ell_n^{1-2\gamma}\log({1/\ell_{n+1}})$ and $\log({1/\ell_{n+1}})$ is smaller than a multiple of $\log(1/\ell_{n})$, there exists a constant $c_3$ such that $\alpha_{n+2} \leq c_3 \alpha_n \ell_n^{1-2\gamma}\log({1/\ell_n})$. To compare $(\alpha_n)_{n\in \mathbb{N}}$ and $(C_n)_{n\in \mathbb{N}}$, {notice that, up to a constant, $\alpha_1$} and $\alpha_2$ are equal to $C_1$ and $C_2$. Moreover
\[
\frac{\alpha_{n+2}}{C_{n+2}} \leq \frac{c_3\ell_n^{1-2\gamma}\log\left(\tfrac{1}{\ell_n}\right) \alpha_n }{c^2 \ell_n^{1-\gamma-\gamma/\delta} C_n} = \frac{c_3}{c^2} \ell_n^{\gamma/\delta - \gamma} \log\left(\tfrac{1}{\ell_n}\right) \frac{\alpha_n}{C_n}.
\]
Applying this inequality recursively and expressing $\alpha_{n+2}$ we obtain that for some $c_4>0$ 
\[
\alpha_n \leq C_n\prod_{i=1}^{\suag{\frac{n}{2}}-1}c_4 \ell_{n-2i}^{\gamma/\delta-\gamma}\log\left(\tfrac{1}{\ell_{n-2i}}\right)\quad \text{for all}\ n\in \mathbb{N}.
\]
Hence, we have
\begin{align*}
&\alpha_{n+1}^2 \ell_{n+1}^{1-2\gamma} \leq C_{n+1}^2\ell_{n+1}^{1-2\gamma} \prod_{i=1}^{\suag{\frac{n+1}{2}}-1}c_4^2 \ell_{n+1-2i}^{2(\gamma/\delta-\gamma)}\log\left(\tfrac{1}{\ell_{n+1-2i}}\right)^2\\
&\leq \left(\tfrac{3\varepsilon(1-\gamma)}{\pi^2 (n+1)^2}\right)^2 \ell_{n+1}^{-1} \ell_{n+1}^{2(\gamma/\delta-\gamma)\sum_{i=1}^{\suag{\frac{n+1}{2}}-1} \left(\delta(1-\gamma)/\gamma\right)^i}\prod_{i=1}^{\suag{\frac{n+1}{2}}-1}\!\!\!\!\!\!c_1^{\sum_{j=1}^i(\delta(1-\gamma)/\gamma)^i} c_4^2 \log\left(\tfrac{1}{\ell_{n+1-2i}}\right)^2,
\end{align*}
where the second inequality follows by \eqref{eq:truncdef_gen} and \eqref{eq:eta_recbound}. Observe that, as \smash{$\frac{\delta(1-\gamma)}{\gamma}<1$}, the series \smash{$\sum_{i=1}^{\infty} \left(\delta(1-\gamma)/\gamma\right)^i$} converges. Hence, there exists a constant which is larger than~$c_1$ to the power 
\smash{${\sum_{j=1}^i(\delta(1-\gamma)/\gamma)^i}$} for any $i\in \mathbb{N}$ and 
 a constant $c_5>0$ such that
\[
\ell_{n+1}^{2(\gamma/\delta-\gamma)\sum_{i=1}^{\suag{\frac{n+1}{2}}-1} \left(\delta(1-\gamma)/\gamma\right)^i} \leq \ell_{n+1}^{-c_5}.
\]
Furthermore since we have established that $\eta_n$ is of the order displayed in \eqref{eq:truncorder_gen} it follows directly that the left-hand side multiplied with the product above can also be bounded by $\ell_{n+1}^{-c_5}$ for any sufficiently large constant $c_5>0$. Hence, there exists a further constant $c_6 >0$ such that \smash{$\frac{4(1+c_2)}{2\gamma-1}\alpha_{n+1}^2 \ell_{n+1}^{1-2\gamma} \leq \frac{c_6}{(n+1)^4}\ell_{n+1}^{-(1+c_5)}$}. Therefore, we have by using \eqref{eq:truncorder_gen} once more that
\begin{align*}
\sum_{n=1}^{2\Delta} \P_{\x,\y}(B^{(\x,\y)}_n) &\leq \frac{c_6}{(\Delta+1)^3}\abs{x-y}^{-a}\ell_{\Delta+1}^{-(1+c_5)}\\
&\leq \frac{c_6b}{(\Delta+1)^3}\abs{x-y}^{-a}\exp\left(B(1+c_5)\left(\frac{\gamma}{\delta(1-\gamma)}\right)^{\frac{\Delta+1}{2}}\right).
\end{align*}
Let $D>0$ such that $B(1+c_5)(\gamma/(\delta(1-\gamma)))^{\frac{1-D}{2}}<a$ and choose $\Delta \leq \frac{2\log\log\abs{x-y}}{\log\left(\gamma/\delta(1-\gamma))\right)} - D$. Then the above expression is of order \smash{$\mathcal{O}(\log\log\abs{x-y}^{-2})$}. Hence, for our choice of $\Delta$, we have
\[
\P_{\x,\y}\set{\mathd (\x, \y) \leq 2\Delta} \leq \varepsilon + \mathcal{O}\left(\log\log\abs{x-y}^{-2}\right),
\]
which implies the stated lower bound of Theorem \ref{mainthm}(b).

\subsection{The non-ultrasmall regime}
In this section we consider the case $\gamma<\frac{\delta}{\delta+1}$ and show that the graph is not ultrasmall, i.e. the chemical distance in the graph is not of double logarithmic order of the Euclidean distance. In particular, we show the following.

\begin{prop}\label{prop:smlthm}
Let $\mathscr{G}$ be a geometric random graph which satisfies Assumption \ref{ass:main1} {for {some} $\delta>1$ and  \smash{$0<\gamma<\frac{\delta}{\delta + 1}$}. Then, for any $p>1$, there exists $c>0$ such that, for $\x,\y \in \mathbb{R}^d\times (0,1)$,  we have 
\[
\mathd(\mathbf{x},\mathbf{y}) \geq c\frac{\log \abs{x-y}}{\left(\log\log\abs{x-y}\right)^p}
\]
under {$\P_{\x,\y}$} with high probability as $\abs{x-y} \to \infty$.}\end{prop}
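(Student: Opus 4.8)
The plan is to run the truncated first-moment bound \eqref{eq:truncmombound}, i.e. to estimate
\[
\P_{\x,\y}\set{\mathd(\x,\y)\le 2\Delta}\le \sum_{n=1}^{\Delta}\P_{\x}(A_n^{(\x)})+\sum_{n=1}^{\Delta}\P_{\y}(A_n^{(\y)})+\sum_{n=1}^{2\Delta}\P_{\x,\y}(B_n^{(\x,\y)}),
\]
and to pick the truncation sequence $(\ell_n)_{n\in\mathbb{N}_0}$ and the scale $\Delta$ so that the right-hand side stays small as $\abs{x-y}\to\infty$. The structural difference to Section~\ref{subsec:ultsml_phase} is that in the present regime $\gamma+\gamma/\delta<1$, so the exponent $1-\gamma-\gamma/\delta$ is \emph{positive}: joining two powerful vertices through a connector costs \emph{more} than joining them by a single edge, and the dominant contribution to the bad-path probabilities comes from paths whose increasingly powerful vertices are linked directly, as sketched in Section~\ref{subsec:outline}. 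In particular one need not pass to the conductance-modified graph $\tilde{\mathscr{G}}$; a decomposition of arbitrary connection strategies analogous to (but simpler than) that of Section~\ref{subsec:ultsml_phase} is still needed to dominate connector strategies, but now each extra connector contributes a factor of order $\ell^{\,1-\gamma-\gamma/\delta}<1$, so these terms are subdominant.

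\textbf{Bad paths.} By Mecke's equation $\P_{\x}(A_n^{(\x)})\le\int_{\mathbb{R}^d\times(0,\ell_n]}\mathd\y\,\E_{\x,\y}N(\x,\y,n)$, and running the tree decomposition directly on $\mathscr{G}$ — controlling the number of connectors between consecutive powerful vertices by Assumption~\ref{ass:main1} and the elementary spatial integral $I_\rho$ — one obtains the analogue of Lemma~\ref{lem:mu_bound}, namely a constant $c>0$ with
\[
\P_{\x}(A_n^{(\x)})\le c^{\,n}\,\ell_n^{1-\gamma}\,\ell_0^{\gamma-1}\prod_{i=1}^{n-1}\log\!\big(\tfrac1{\ell_i}\big)\qquad\text{for }\ell_0<t\wedge\tfrac1e,
\]
which is Lemma~\ref{lem:mu_bound_non}. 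Given $\varepsilon>0$ I fix $\ell_0<t\wedge\frac1e$ and define $(\ell_n)$ by requiring the right-hand side to equal $\varepsilon/(\pi^2n^2)$; this yields $\sum_n\P_{\x}(A_n^{(\x)})\le\varepsilon/6$, and the same for $\y$, so the first two sums above are at most $\varepsilon/3$. Writing $\eta_n:=\ell_n^{-1}$ and taking ratios of consecutive defining identities gives the recursion $\eta_{n+1}^{1-\gamma}=c\,\tfrac{(n+1)^2}{n^2}\,\eta_n^{1-\gamma}\log\eta_n$, hence $\log\eta_{n+1}=\log\eta_n+\tfrac1{1-\gamma}\log\log\eta_n+O(1)$; summing shows $\log\eta_n\asymp\tfrac1{1-\gamma}\,n\log n$, so that for every $p>1$ there is $B>0$ with $\eta_n\le B^{\,n\log^p(n+1)}$.

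\textbf{Good paths.} The scale $\Delta$ will be $O(\log\abs{x-y})$, so Remark~\ref{rem:spatial_integration} applies and Lemma~\ref{lem:spatial_integration} reduces $\sum_{n=1}^{2\Delta}\P_{\x,\y}(B_n^{(\x,\y)})$ to the non-spatial quantities $\nu_n$ of \eqref{eq:non-spatial_mu_def}, up to a gain $\abs{x-y}^{-a}$. Bounding $\nu_n^{\x}(s)\le\alpha_ns^{-\gamma}+\beta_ns^{\gamma-1}$ via Lemma~\ref{lem:non-spatial_mu_bound} and comparing $(\alpha_n),(\beta_n)$ with the truncation sequence as in Section~\ref{subsec:ultsml_phase}, one arrives at a bound of the form $\sum_{n=1}^{2\Delta}\P_{\x,\y}(B_n^{(\x,\y)})\le \abs{x-y}^{-a}\,\eta_{\Delta+1}^{\,c'}$ for some $c'>0$. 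Since $\log\eta_{\Delta+1}\le(\Delta+1)\log^p(\Delta+2)\log B$ and the choice $\Delta:=\gaus{c\log\abs{x-y}/(\log\log\abs{x-y})^p}$ makes $(\Delta+1)\log^p(\Delta+2)\le 2c\log\abs{x-y}$ for $\abs{x-y}$ large, taking $c>0$ small enough that $2c\,c'\log B<a$ renders this sum $o(1)$. Combining the three bounds gives $\P_{\x,\y}\set{\mathd(\x,\y)\le2\Delta}\le\varepsilon/3+o(1)$; since $\varepsilon$ is arbitrary, $\mathd(\x,\y)>2\Delta\ge c\log\abs{x-y}/(\log\log\abs{x-y})^p$ with high probability, which is the claim.

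\textbf{Main difficulty.} The delicate point is Lemma~\ref{lem:mu_bound_non}: showing that when $\gamma+\gamma/\delta<1$ arbitrary connection strategies are dominated by the direct-edge strategy with \emph{precisely} the bound $c^{n}\ell_n^{1-\gamma}\ell_0^{\gamma-1}\prod_{i=1}^{n-1}\log(1/\ell_i)$. This requires re-running the coloured-tree induction of Lemmas~\ref{lem:kconn_number_determ_bound}–\ref{lem:mu_bound}, now checking that each extra connector produces a factor $\ell^{\,1-\gamma-\gamma/\delta}<1$ rather than a blow-up and that the Catalan number of trees is absorbed by these gains, and then verifying that the accumulating constants $c^{n}$ and $\prod\log(1/\ell_i)$ are exactly what the recursion for $(\ell_n)$ can swallow while still yielding $\eta_n\le B^{n\log^p(n+1)}$ for every $p>1$ — the extra $\log$-power (rather than a pure exponential) being precisely the price of those constants, which is why the lower bound carries the $(\log\log\abs{x-y})^{-p}$ correction.
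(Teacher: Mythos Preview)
Your bad-path analysis matches the paper's, and the conclusion $\eta_n\le B^{n\log^p(n+1)}$ is correct. The gap is in the good-path estimate. You assert that Lemma~\ref{lem:non-spatial_mu_bound} together with a comparison ``as in Section~\ref{subsec:ultsml_phase}'' yields $\sum_{n\le2\Delta}\P_{\x,\y}(B_n^{(\x,\y)})\le|x-y|^{-a}\eta_{\Delta+1}^{\,c'}$, but this fails when $\gamma>\tfrac12$ (which is part of the regime, since $\delta/(\delta+1)>\tfrac12$). In the recursion \eqref{eq:alp_bet_def} the term $\alpha_n\ell_n^{1-2\gamma}$ in $\beta_{n+1}$ now blows up: because $\log\eta_n$ grows only like $n\log^p n$ (not doubly exponentially), the dominant eigenvalue of the $2\times2$ step matrix is of order $\ell_n^{(1-2\gamma)/2}=\eta_n^{(2\gamma-1)/2}$, whence $\log\alpha_n,\log\beta_n\asymp\sum_{i<n}\log\eta_i\asymp n^2\log^p n$. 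With $\Delta\asymp\log|x-y|/(\log\log|x-y|)^p$ this gives a contribution of order $\exp\!\big(c(\log|x-y|)^2/(\log\log|x-y|)^p\big)$, which $|x-y|^{-a}$ cannot absorb. The comparison of $\alpha_n$ with $C_n$ in Section~\ref{subsec:ultsml_phase} worked only because the $C_{n+2}$-recursion there contained the large factor $\ell_n^{1-\gamma-\gamma/\delta}$; that factor is $<1$ here and has been dropped from the $C_n$-recursion of Lemma~\ref{lem:mu_bound_non}, so the comparison collapses.

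The paper therefore does \emph{not} use $\nu_n$ for the good paths in this regime. It still passes to a modified graph $\tilde{\mathscr{G}}$ (short edges added, but no conductances) and replaces $B_n$ by the event $\tilde B_n$ that a step-minimizing path of length $n$ exists whose most powerful vertex has mark $\ge\ell_{\lfloor n/2\rfloor}$. Such a path is decomposed at its \emph{powerful vertices}; between consecutive powerful vertices the segment is a connector path, bounded via Lemma~\ref{lem:kconn_non} by $C^{k-1}$ times the direct-edge kernel (this is where the step-minimizing structure is needed). After spatial integration as in Lemma~\ref{lem:spatial_integration}, the remaining mark integral runs over $m$ \emph{ordered} marks (decreasing from $\x$ to the minimum, then increasing to $\y$) and hence carries a factor $1/(m-2)!$. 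Summing over $m$ and over the placement of the connector segments then produces a bound of the desired order $|x-y|^{-a}B^{c_5\Delta\log^p(\Delta+1)}$. Your heuristic that ``each extra connector contributes $\ell^{1-\gamma-\gamma/\delta}<1$'' is precisely Lemma~\ref{lem:kconn_non}, but it must be fed into the good-path count as well, not only into the bad-path bound.
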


The proof is structurally analogous to the ultrasmall case, but significantly easier due to the simpler nature of the dominating strategy.
As in Section~\ref{subsec:ultsml_phase}, we bound the probabilities in \eqref{eq:truncmombound} using a suitable truncation sequence $(\ell_n)_{n\in \mathbb{N}_0}$ such that the probability that bad paths starting in a vertex $\x$ exist can be made arbitrarily small. {
In this case, however, the truncation sequence decreases only exponentially. {Similarly to the ultrasmall case}, we construct a graph~$\tilde{\mathscr{G}}$ which contains a copy of $\mathscr{G}$ and additionally an edge is added between two vertices $\x=(x,t)$ and $\y = (y,s)$ of $\tilde{\mathscr{G}}$  whenever
\[
\abs{x-y}^d\leq \kappa^{1/\delta}(t\wedge s)^{-\gamma}(t\vee s)^{\gamma - 1}.
\]
{Unlike done previously in Section \ref{subsec:ultsml_phase}, we assign no conductance to any edge in $\tilde{\mathcal{G}}$ and therefore only consider the lengths of paths.}
We declare a self-avoiding path $P = (\x_0,\ldots,\x_n)$ in $\tilde{\mathscr{G}}$
\emph{step minimizing} if there exists no edge between $\x_i$ and $\x_j$ for all $i,j$ with $\abs{i-j}\geq 2$ and denote by \smash{$\tilde{A}^\x_n$} the event that there exists a step minimizing path starting in $\x$ of length $n$ in $\tilde{\mathscr{G}}$, where the final vertex is the first vertex which has a mark smaller than the corresponding $\ell_n$.} Then the first two summands of the right-hand side of \eqref{eq:truncmombound} can be bounded from above by \smash{$\sum_{n=1}^\Delta \P_{\x}(\tilde{A}^{(\x)}_n)$ and $\sum_{n=1}^\Delta \P_{\y}(\tilde{A}^{(\y)}_n)$}, since for any path implying the event \smash{$A^{(\x)}_n$} there exists a step minimizing path in $\tilde{\mathscr{G}}$ of smaller or equal length which also fails to be good on its last vertex.\pagebreak[3]
\bigskip 

To bound these probabilities, we define the random variable $N(\x,\y,n)$ as the number of distinct step minimizing paths between $\x$ and $\y$ of length $n$, whose vertices $(x_1,t_1),\ldots (x_{n-1},t_{n-1})$ fulfill $t\geq \ell_0, t_1\geq \ell_1,\ldots, t_{n-1} \geq \ell_{n-1}$ and which all have a larger mark than $\y$. By Mecke's equation we have that 
$$\P_{\y}(\tilde{A}^{(\y)}_n) \leq \int_{\mathbb{R}^d\times (0,\ell_n]} dy \, \E_{\x,\y}N(\x,\y,n), \qquad \mbox{ for $n\in \mathbb{N}$.}$$ 
As before, the paths counted in $N(\x,\y,n)$ 
can be decomposed such that \eqref{eq:recdecomp_rv_N} holds, where $K(\x,\y,k)$~is the number of step minimizing paths between $\x$ and $\y$ of length $k$ such that the vertices $\x_1,\ldots,\x_{{k}-1}$ between them have marks larger than $\x$ and $\y$. We again refer to such vertices as connectors. Note that if \smash{$\abs{x-y}^d\leq \kappa^{1/\delta}(t\wedge s)^{-\gamma}(t\vee s)^{\gamma - 1}$}, there exists no step minimizing paths of length larger or equal two between  $\x$ and $\y$. Hence, we have $N(\x,\y,n) = K(\x,\y,n) = 0$ for $n\geq 2$ {under this assumption}.
\bigskip 

We now bound the expectation of $K(\x,\y,k)$.  
As in Section \ref{subsec:ultsml_phase}, we define a mapping
\[
{e_K:(\mathbb{R}^d\times (0,1])^2\times \mathbb{N} \to [0,\infty),}
\]
where
$e_K(\x,\y,1) = \rho(\kappa^{-1/\delta}(t\wedge s)^{\gamma}(t\vee s)^{1-\gamma}\abs{x-y}^d),$ for $\x,\y \in \mathbb{R}^d\times (0,1]$ and\begin{equation*}
e_K(\x,\y,k) = \sum_{i=1}^{k-1}\,\, \int\limits_{\mathbb{R}^d\times (t\vee s,1]} \mathd \z \,  e_K(\x,\z,i)e_K(\z,\y,k-i), \quad \text{for}\ k\ge2, \x,\y \in \mathbb{R}^d\times (0,1], \label{eq:e_def_rec_non}
\end{equation*}
if $\smash{\abs{x-y}^d > \kappa^{1/\delta}(t\wedge s)^{-\gamma}(t\vee s)^{\gamma - 1}}$ and otherwise $e_K(\x,\y,k) = 0$. 
As before we use a binary tree to classify the connection strategies and use this together with Assumption~\ref{ass:main1} to obtain
$\E_{\x,\y} K(\x,\y,k) \leq e_K(\x,\y,k)$, for $k\in \mathbb{N}$. 
\medskip

\begin{lemma}\label{lem:kconn_non}
Let $\x = (x,t), \y = (y,s)$ be vertices with 
$\smash{\abs{x-y}^d > \kappa^{1/\delta}(t\wedge s)^{-\gamma}(t\vee s)^{\gamma - 1}}$. Then there exists $C>1$ such that, for $k\geq 2$, we have
\[
e_K(\x,\y,k) \leq C^{k-1} \kappa(t\wedge s)^{-\gamma \delta}(t\vee s)^{(\gamma-1)\delta}\abs{x-y}^{-d\delta}.
\]
\end{lemma}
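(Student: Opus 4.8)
The plan is to mimic the argument for Lemma~\ref{lem:kconn} in the ultrasmall regime, but with the crucial simplification that no truncation parameter $\ell$ and no logarithmic factors enter: here the profile $\rho(\kappa^{-1/\delta}(t\wedge s)^\gamma(t\vee s)^{1-\gamma}|x-y|^d)$ controlling $e_K(\cdot,\cdot,1)$ already decays fast enough in the spatial variable that iterating it produces no extra entropy. Concretely, I would prove by induction on $k$ that
\[
e_K(\x,\y,k) \leq \Cat(k-1)\, C^{k-1}\, \kappa (t\wedge s)^{-\gamma\delta}(t\vee s)^{(\gamma-1)\delta}\abs{x-y}^{-d\delta},
\]
where $\Cat(k-1)$ is the $(k-1)$-th Catalan number, and then absorb $\Cat(k-1)\leq 4^{k-1}$ into a redefined constant $C$.

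\textbf{Base case and induction step.} For $k=2$ the bound is exactly the content of Lemma~\ref{twoconnlem} (the two-connection lemma), provided we also note that in the regime $\gamma<\frac{\delta}{\delta+1}$ the denominator $\gamma-(1-\gamma)\delta$ appearing there is \emph{negative}; hence one should recheck the sign bookkeeping and instead estimate the $u$-integral $\int_{t\vee s}^1 u^{(\gamma-1)\delta+\gamma-1}\,\de u$ by its value at the \emph{upper} endpoint $u=1$, which gives a bound of the order $(t\vee s)^{(\gamma-1)\delta}$ rather than $(t\vee s)^{-\gamma}$ — this is precisely the exponent asserted in the present lemma, and is the one structural difference from the ultrasmall case. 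For the induction step, expand $e_K(\x,\y,k)$ via the recursion $e_K(\x,\y,k)=\sum_{i=1}^{k-1}\int_{\R^d\times(t\vee s,1]}\de\z\, e_K(\x,\z,i)e_K(\z,\y,k-i)$, insert the inductive bounds on the two factors, split the spatial integral using $|x-z|\geq \frac{|x-y|}{2}$ or $|z-y|\geq\frac{|x-y|}{2}$ exactly as in the proof of Lemma~\ref{twoconnlem}, and perform the remaining $z$-integration (yielding a factor $I_\rho$) and $u$-integration over $(t\vee s,1]$. The $u$-integral is where one must be careful: the integrand is a power $u^{(\gamma-1)\delta}\cdot u^{\gamma-1}$ (times a constant), and since $(\gamma-1)\delta+\gamma-1<-1$ fails in general one bounds the integral by a constant times $(t\vee s)^{(\gamma-1)\delta+\gamma}$ — but wait, since $1-\gamma-\gamma/\delta>0$ is \emph{false} here (we are in $\gamma<\frac{\delta}{\delta+1}$, so $1-\gamma-\gamma/\delta>0$ actually \emph{does} hold), the relevant integral $\int_{t\vee s}^1 u^{(\gamma-1)\delta}u^{\gamma-1}\de u$ converges at $0$ and is bounded by a constant; this is what kills the logarithms. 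Collecting the constants $I_\rho$, $2^{d\delta}$ and the sign-corrected factor from the $u$-integral into $C$, and using the Catalan recursion $\sum_{i=1}^{k-1}\Cat(i-1)\Cat(k-i-1)=\Cat(k-1)$, closes the induction.

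\textbf{The tree classification.} As in Lemma~\ref{lem:kconn_number_determ_bound}, the inequality $\E_{\x,\y}K(\x,\y,k)\leq e_K(\x,\y,k)$ that justifies working with $e_K$ is obtained by the coloured-binary-tree decomposition of step-minimizing paths at their most powerful connector; this is stated in the excerpt just before Lemma~\ref{lem:kconn_non} and I would simply cite it rather than redo it, noting that the added edges in $\tilde{\mathscr{G}}$ here satisfy $|x-y|^d\leq\kappa^{1/\delta}(t\wedge s)^{-\gamma}(t\vee s)^{\gamma-1}$ (different exponent from the ultrasmall case, but the combinatorial structure of the decomposition is identical).

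\textbf{Main obstacle.} The only genuine subtlety — as opposed to routine bookkeeping — is getting the exponents of $t\vee s$ right in the $u$-integration over the mark of the branching connector, since the sign of $(1-\gamma)\delta-\gamma$ is opposite to the ultrasmall case and one must consistently bound $\int_{t\vee s}^1 u^{\text{(neg.)}}\de u$ by evaluating at the lower limit $t\vee s$ (rather than getting a log or a positive power of the upper limit $1$). Once this is pinned down the induction propagates cleanly and produces exactly the claimed $(t\vee s)^{(\gamma-1)\delta}$ with only a geometric-in-$k$ constant, with no logarithmic corrections and no dependence on any truncation sequence.
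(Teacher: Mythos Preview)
Your overall plan — induction on $k$ with Catalan-number bookkeeping, base case the two-connection estimate, inductive step via the defining recursion and the spatial splitting from Lemma~\ref{twoconnlem} — is exactly the paper's approach. Two points, one cosmetic and one substantive:

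\emph{Signs and endpoints.} Your discussion is internally inconsistent. In the regime $\gamma<\tfrac{\delta}{\delta+1}$ the $u$-integrand in the two-connection calculation is $u^{(\gamma-1)(\delta+1)}$ with exponent strictly below $-1$, so the integral over $(t\vee s,1]$ is dominated by the \emph{lower} endpoint $t\vee s$ (as you correctly say in your final paragraph), not by the upper endpoint $u=1$ (as you say in the base-case paragraph), and it certainly does not ``converge at $0$''. Evaluating at the lower endpoint is precisely what produces the exponent $(t\vee s)^{(\gamma-1)\delta}$ you want. The paper bypasses this by citing \cite[Lemma~2.2]{GraLM2021} for the base case~\eqref{eq:twoconn_non}.

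\emph{The $z$-integration in the inductive step.} You propose to ``insert the inductive bounds on the two factors'' and then integrate in $z$ ``yielding a factor $I_\rho$''. But the raw inductive bound is $\Cat(i-1)C^{i-1}\kappa\,t^{-\gamma\delta}u^{(\gamma-1)\delta}\abs{x-z}^{-d\delta}$, which is not integrable in $z$ near $z=x$; $I_\rho$ is the integral of $\rho$, not of $\abs{\cdot}^{-d\delta}$. The observation you are missing — and it is what makes the paper's induction step a one-liner — is that by the definition of $e_K$ in this section one has $e_K(\x,\z,i)=0$ for $i\geq 2$ whenever $\abs{x-z}^d\leq\kappa^{1/\delta}t^{-\gamma}u^{\gamma-1}$, so the inductive bound can be upgraded to
\[
e_K(\x,\z,i)\leq \Cat(i-1)\,C^{i-1}\,\rho\big(\kappa^{-1/\delta}t^{\gamma}u^{1-\gamma}\abs{x-z}^d\big)
\]
(and the case $i=1$ is this with $\Cat(0)C^0=1$ by definition). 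With both factors in $\rho$-form the inductive-step integral is \emph{literally} the base-case integral~\eqref{eq:twoconn_non}, and the Catalan recursion $\sum_{i=1}^{k-1}\Cat(i-1)\Cat(k-i-1)=\Cat(k-1)$ closes the induction immediately, with no separate $u$-integration to redo.
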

\begin{proof}
By \cite[Lemma 2.2]{GraLM2021} there exists a constant $C>1$ such that if $\abs{x-y}^d > \kappa^{1/\delta}(t\wedge s)^{-\gamma}(t\vee s)^{\gamma - 1}$ we have
\begin{align}
\begin{split}
e_K(\x,\y,2) \leq& \int\limits_{\mathbb{R}^d\times (t\vee s,1]} \mathd \z \rho(\kappa^{-1/\delta}t^{\gamma}u^{1-\gamma}\abs{x-z}^d)\rho(\kappa^{-1/\delta}s^{\gamma} u^{1-\gamma}\abs{y-z}^d)\\
 \leq& C \kappa(t\wedge s)^{-\gamma \delta}(t\vee s)^{(\gamma-1)\delta}\abs{x-y}^{-d\delta}.\label{eq:twoconn_non}
\end{split}
\end{align}
We now show by induction that
\begin{equation}
e_K(\x,\y,k) \leq \Cat(k-1)C^{k-1} \kappa(t\wedge s)^{-\gamma \delta}(t\vee s)^{(\gamma-1)\delta}\abs{x-y}^{-d\delta}\label{eq:kconn_bound_non}
\end{equation}
holds for all $k\geq 2$. This is sufficient, since $\Cat(k)\leq 4^{k}$. For $k=2$ this follows from~\eqref{eq:twoconn_non}. Let $k\geq 3$ and assume \eqref{eq:kconn_bound_non} holds for all $j= 2,\ldots, k-1$. For $\smash{\abs{x-y}^d > \kappa^{1/\delta}(t\wedge s)^{-\gamma}(t\vee s)^{\gamma - 1}}$ this, together with the definition of $e_K(\x,\y,k)$, yields
\begin{align*}
e_K(\x,\y,k) \leq \sum_{i=1}^{k-1}&\Cat(i-1)\Cat(k-i-1)C^{k-2} \times\\&\int\limits_{\mathbb{R}^d\times (t\vee s,1]} \mathd \z \,\rho(\kappa^{-1/\delta}t^{\gamma}u^{1-\gamma}\abs{x-z}^d)\rho(\kappa^{-1/\delta}s^{\gamma} u^{1-\gamma}\abs{y-z}^d).
\end{align*}
With \eqref{eq:twoconn_non} the right-hand side can be further bounded by
\begin{align*}
\sum_{i=1}^{k-1} \Cat(i-1)\Cat(k-i-1)C^{k-1}\kappa(t\wedge s)^{-\gamma \delta}(t\vee s)^{(\gamma-1)\delta}\abs{x-y}^{-d\delta}.
\end{align*}
As $\sum_{i=1}^{k-1} \Cat(i-1)\Cat(k-i-1) = \Cat(k-1)$ we get that \eqref{eq:kconn_bound_non} holds for $k$.
\end{proof}
\paragraph{Probability bounds for bad paths}
Using Lemma \ref{lem:kconn_non} and \eqref{eq:recdecomp_rv_N} we find a suitable upper bound for $\int_{\mathbb{R}^d}\E_{\x,\y}N(\x,\y,n)  \mathd y$, with $\y = (y,s)$, which leads to a bound for $\P_{\x}(\tilde{A}_n^{\x})$. Recall that by \eqref{eq:recdecomp_rv_N} we have, for $n\in \mathbb{N}$,
\[
N(\x,\y,n) \leq K(\x,\y,n) + \sum_{k=1}^{n-1} \sum_{\substack{\z = (z,u)\\ t>u> \ell_{n-k}\vee s}} N(\x,\z,n-k)K(\z,\y,k).
\]
As in Section \ref{subsec:ultsml_phase}, to establish an upper bound on $\E_{\x,\y}N(\x,\y,n)$, we define a mapping
\[
{e_N\colon (\mathbb{R}^d\times (0,1])^2 \times \mathbb{N} \to [0,\infty),}
\]
by setting
$e_N(\x,\y,1) = \rho(\kappa^{-1/\delta}(t\wedge s)^{\gamma}(t\vee s)^{1-\gamma}\abs{x-y}^d)$, for $\x,\y \in \mathbb{R}^d\times (0,1]$,
and for $n\geq 2$ if $\smash{\abs{x-y}^d > \kappa^{1/\delta}(t\wedge s)^{-\gamma}(t\vee s)^{\gamma - 1}}$ we set $e_N(\x,\y,n)$ to be
\begin{equation}
e_K(\x,\y,n) + \sum_{i=1}^{n-1}\int\limits_{\mathbb{R}^d\times (\ell_{n-k}\vee s,1]} \mathd \z e_N(\x,\z,n-k)e_K(\z,\y,k), \quad \text{for}\ \x,\y \in \mathbb{R}^d\times (0,1],
\end{equation}
 and otherwise $e_N(\x,\y,n) = 0$. As in Section~\ref{subsec:ultsml_phase} we have $\E_{\x,\y} N(\x,\y,n) \leq e_N(\x,\y,n)$, for $n\in \mathbb{N}$.
Thus, for a given vertex $\x=(x,t)$ and $n\in \mathbb{N}$, an upper bound of $\int_{\mathbb{R}^d} \mathd y \E_{\x,\y}N(\x,\y,n)$ is given by the mapping $\mu_n^\x:(0,t] \to [0,\infty)$ defined by
\begin{equation}
\mu_n^\x(s) := \int_{\mathbb{R}^d}\mathd y \, e_N(\x,\y,n),\quad \text{for}\ s\in (0,t],
\end{equation}
where $\y = (y,s)$. We interpret $s$ as the mark of the last vertex of a path counted by the random variable $N(\x,\y,n)$.
With \smash{$I_\rho = \int\mathd x \rho(\kappa^{-1/\delta}\abs{x}^d)$} we can see by the definition of $e_N(\x,\y,1)$ that
$\mu_1^\x(s) \leq I_\rho s^{-\gamma}t^{\gamma-1}$ for $s\in (0,t]$
and for $n\geq 2$ it follows by  a short calculation and Lemma \ref{lem:kconn_non} that
\[
\mu_n^\x(s) \leq I_\rho C^{n-1}s^{-\gamma}t^{\gamma-1}+\sum_{k=1}^{n-1}I_\rho C^{k-1}s^{-\gamma} \int\limits_{\ell_{n-k}}^t\mathd u \mu_{n-k}^\x(u) u^{\gamma-1}\quad \text{for}\ s\in (0,t],
\]
where $C>1$ is the constant from Lemma \ref{lem:kconn_non}.
To establish a bound for $\mu_n^\x$ no further assumptions on the truncation sequence $(\ell_n)_{n\in\mathbb{N}_0}$ are necessary. As discussed in Section \ref{subsec:outline} we will see that the major contribution to the mass of $\mu_n^\x(s)$ comes from the paths where the two most powerful vertices are connected directly and not via one or more connectors. This is indicated by the definition of the sequence $(C_n)_{n\in \mathbb{N}_0}$ and the inequality \eqref{eq:exponential_cn_bound_non} in the proof of the following lemma.
\begin{lemma} \label{lem:mu_bound_non}
Let $\x = (x,t)$ be a given vertex and let the sequence $(\ell_n)_{n\in \mathbb{N}_0}$ be monotonically decreasing with $\ell_0 <t\wedge \frac{1}{e}$. Then, there exists $c>0$ such that, for $n\in \mathbb{N}$, \begin{equation}
\mu_n^\x(s) \leq C_n s^{-\gamma}\quad \text{for}\ s\in (0,t], \label{eq:mu_bound_non}
\end{equation}
where $C_1 = c\ell_0^{\gamma-1}$ and $C_{n+1} = c\log(\frac{1}{\ell_n})C_n$.
\end{lemma}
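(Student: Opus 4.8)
The plan is to establish \eqref{eq:mu_bound_non} by induction on $n$, following the scheme of the proof of Lemma~\ref{lem:mu_bound} but exploiting the much simpler recursion available in this regime. First I fix $c$ large enough: larger than $I_\rho$, larger than the constant $C$ from Lemma~\ref{lem:kconn_non}, and larger than a fixed numerical constant (it will turn out that $c \ge 3I_\rho \vee 2C$ suffices). Since $\ell_n \le \ell_0 < \tfrac1e$ we have $\log(\tfrac1{\ell_n}) > 1$ for every $n$; together with the choice of $c$ this makes $(C_n)_{n\in\mathbb N_0}$ non-decreasing and yields the crude bound $c^{n-1}\ell_0^{\gamma-1} \le C_{n-1}$ for $n\ge 2$ (since $C_{n-1}=c\ell_0^{\gamma-1}\prod_{i=1}^{n-2}c\log(\tfrac1{\ell_i})$ and each factor is at least $c$), both of which I will use repeatedly.

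For the base case $n=1$, the bound $\mu_1^\x(s) \le I_\rho s^{-\gamma} t^{\gamma-1}$ together with $t \ge \ell_0$ and $\gamma-1<0$ gives $t^{\gamma-1}\le\ell_0^{\gamma-1}$, hence $\mu_1^\x(s) \le I_\rho \ell_0^{\gamma-1} s^{-\gamma} \le C_1 s^{-\gamma}$. For the inductive step I assume the claim for all $\tilde n \le n-1$ and insert it into the recursive inequality for $\mu_n^\x$ stated just before the lemma. In each integral term $\mu_{n-k}^\x(u)u^{\gamma-1} \le C_{n-k} u^{-1}$ and $\int_{\ell_{n-k}}^t u^{-1}\,\mathd u \le \log(\tfrac{1}{\ell_{n-k}})$ since $t \le 1$, so that
\[
\mu_n^\x(s) \le I_\rho s^{-\gamma}\Big(C^{n-1}\ell_0^{\gamma-1} + \sum_{k=1}^{n-1} C^{k-1} C_{n-k}\log\big(\tfrac{1}{\ell_{n-k}}\big)\Big).
\]
What remains is to bound the bracket by a constant multiple of $C_{n-1}\log(\tfrac1{\ell_{n-1}})$, so that it is absorbed into $C_n = c\log(\tfrac1{\ell_{n-1}})C_{n-1}$ once $c \ge 3I_\rho$.

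This last estimate is the only step requiring a genuine, if mild, argument, and it is exactly the auxiliary inequality referred to before the statement of the lemma (the one indicating that the dominant contribution comes from paths connecting the two most powerful vertices by a single direct edge, as announced in Section~\ref{subsec:outline}). Using $C_{n-k+1} = c\log(\tfrac1{\ell_{n-k}})C_{n-k}$ and $\log(\tfrac1{\ell_{n-k+1}}) \ge 1$ one checks that consecutive summands satisfy $C^{k-1}C_{n-k}\log(\tfrac1{\ell_{n-k}}) \le \tfrac{C}{c}\,C^{k-2}C_{n-k+1}\log(\tfrac1{\ell_{n-k+1}})$, so with $c \ge 2C$ the summands decay geometrically in $k$ and are dominated by the $k=1$ term $C_{n-1}\log(\tfrac1{\ell_{n-1}})$; hence the sum is at most $2C_{n-1}\log(\tfrac1{\ell_{n-1}})$. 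The leftover satisfies $C^{n-1}\ell_0^{\gamma-1} \le c^{n-1}\ell_0^{\gamma-1} \le C_{n-1} \le C_{n-1}\log(\tfrac1{\ell_{n-1}})$. Altogether $\mu_n^\x(s) \le 3I_\rho C_{n-1}\log(\tfrac1{\ell_{n-1}})s^{-\gamma} \le C_n s^{-\gamma}$, which closes the induction. I expect the geometric-decay comparison of the summands to be the main (though modest) obstacle; note that, unlike in the ultrasmall regime, no faster-than-exponential decay of $(\ell_n)$ is needed here, because the decisive ratio $C/(c\log(\tfrac1{\ell_\cdot}))$ is already strictly below $1$ by the choice of $c$ alone — which is why the hypotheses require only that $(\ell_n)$ be monotonically decreasing.
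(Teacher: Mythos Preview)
Your proof is correct and follows essentially the same approach as the paper's: both argue by induction, reduce to the same bracketed expression, and control it via the geometric decay $C^{k-1}C_{n-k}\log(\tfrac1{\ell_{n-k}}) = \tfrac{C}{c}C^{k-2}C_{n-k+1}$ of consecutive summands, using $c\ge 2C$ to sum. The only cosmetic differences are that the paper takes $c>2(C\vee I_\rho)$ and absorbs the stray term $C^{n-1}\ell_0^{\gamma-1}$ into the geometric sum as an extra ``$k=n$'' summand (yielding a factor $2$ instead of your $3$), whereas you bound it separately via $c^{n-1}\ell_0^{\gamma-1}\le C_{n-1}$; both routes are equivalent.
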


\begin{proof}
We choose the constant $c>2(C\vee I_\rho)$, where $C$ is as in
Lemma~\ref{lem:kconn_non}. Then by definition of $\mu_1^\x$ we have 
\smash{$\mu_1^\x(s) = I_\rho s^{-\gamma}t^{\gamma-1} \leq c s^{-\gamma}\ell_0^{\gamma-1} = C_1s^{-\gamma}$} for $s\in (0,t]$.
Let $n\geq 2$ and assume that \eqref{eq:mu_bound_non} holds for all $\tilde{n} \leq n-1$. Then, by \eqref{eq:mu_bound_non},
\begin{align*}
\mu_n^\x(s) &= I_\rho C^{n-1} s^{-\gamma}t^{\gamma-1} + \sum_{k=1}^{n-1} I_\rho C^{k-1} s^{-\gamma} \int\limits_{\ell_{n-k}}^t \mathd u \mu_{n-k}^\x (u) u^{\gamma-1}\\
&= I_\rho s^{-\gamma} \Big(C^{n-1}\ell_0^{\gamma-1} + \sum_{k=1}^{n-1} C_{n-k} C^{k-1}\log\big(\tfrac{1}{\ell_{n-k}}\big)\Big).
\end{align*}
We now want to show that 
\begin{equation}
C^{n-1}\ell_0^{\gamma-1} + \sum_{k=1}^{n-1} C_{n-k} C^{k-1}\log\left(\tfrac{1}{\ell_{n-k}}\right) \leq 2\log\left(\tfrac{1}{\ell_{n-1}}\right) C_{n-1}, \label{eq:exponential_cn_bound_non}
\end{equation}
since assuming this leads to
\smash{$\mu_n^\x(s) \leq 2I_\rho \log(\tfrac{1}{\ell_{n-1}}) C_{n-1}s^{-\gamma} \leq c\log(\tfrac{1}{\ell_{n-1}}) C_{n-1}s^{-\gamma}$}
\smash{$ = C_n s^{-\gamma}$},
which completes the proof. By definition of the constants $C_n$ we have that
\[
C^{n-1}\ell_0^{\gamma-1} + \sum_{k=1}^{n-1} C_{n-k} C^{k-1}\log(\tfrac{1}{\ell_{n-k}}) \leq \log(\tfrac{1}{\ell_{n-1}})C_{n-1} + \frac{1}{2}\sum_{k=2}^n C^{k-2}C_{n-k+1}.
\]
As $\log(\frac{1}{\ell_n})>1$, for all $n\in \mathbb{N}_0$, we have $C_{n+1}\geq cC_n$ by definition of $(C_n)_{n\in \mathbb{N}_0}$, and using that $c>2C$, the right-hand side can be further bounded by
\begin{align*}
\log\big(\tfrac{1}{\ell_{n-1}}\big)C_{n-1} + \sum_{k=2}^n \big(\tfrac{1}{2}\big)^{k-1} c^{k-2} C_{n-k+1}
& \leq 2\log\big(\tfrac{1}{\ell_{n-1}}\big) C_{n-1},
\end{align*}
which shows \eqref{eq:exponential_cn_bound_non}.
\end{proof}
Now we bound the probability of the event $\tilde{A}_n^{(\x)}$, i.e. the event that there exists a path of length $n$, where the last vertex is the only vertex which has a mark smaller than its truncation bound $\ell_n$. As in Section~\ref{subsec:ultsml_phase}, Mecke's equation yields
\[
\P(\tilde{A}_n^{(\x)}) \leq \int\limits_{\mathbb{R}^d \times (0,\ell_n]} \mathd \y \E_{\x,\y}N(\x,\y,n) \leq \int\limits_0^{\ell_n} \mathd s \mu_n^{\x}(s) \leq \frac{1}{1-\gamma}\ell_n^{1-\gamma} C_n,
\]
where we have used Fubini's theorem in the second inequality and Lemma \ref{lem:mu_bound_non} in the third one. With $\ell_0 < t\wedge \frac{1}{e}$ we choose the sequence $(\ell_n)_{n\in \mathbb{N}}$ for $\epsilon>0$, such that 
\[
\frac{1}{1-\gamma} C_n \ell_n^{1-\gamma} = \frac{\varepsilon}{\pi^2n^2},
\]
and get $P(\tilde{A}_n^{(\x)})\leq \frac{\varepsilon}{6}$. From the recursive definition of  the sequence
$(C_n)$ we obtain a recursive representation of $(\ell_n)_{n\in \mathbb{N}_0}$. Let $\eta_n := \ell_n^{-1}$ for $n\in \mathbb{N}_0$, then
\begin{align*}
\eta_{n+1}^{1-\gamma} &= \tfrac{\pi^2(n+1)^2}{3\varepsilon}\tfrac{1}{1-\gamma}C_{n+1}
&= \tfrac{\pi^2(n+1)^2}{3\varepsilon}\tfrac{1}{1-\gamma}\left(c\log(\eta_{n})C_n\right) = \tfrac{(n+1)^2}{n^2}c\log(\eta_{n+1})\eta_{n+1}^{1-\gamma}.
\end{align*}
Hence, there exists a new constant $c>0$ such that $\eta_{n+1}^{1-\gamma} \leq c\log(\eta_{n+1})\eta_{n+1}^{1-\gamma}$ and by induction we get that for any $p>1$ there exists $B>1$ large enough such that
\begin{equation}
\eta_n \leq B^{n\log^p(n+1)}. \label{eq:eta_bound_non}
\end{equation}

\paragraph{Probability bounds for good paths}
We now consider the existence of good paths between two given vertices $\x$ and $\y$.
We focus on the case 
\smash{$\gamma \in (\frac{1}{2}, \frac{\delta}{\delta+1})$}, as the cases $\gamma = \frac{1}{2}$ and $\gamma<\frac{1}{2}$ follow with analogous or simpler arguments. {As before we restrict the event $B_n^{\x,\y}$ to the existence of a step minimizing good path of length~$n$ connecting $\x$ and $\y$ in $\tilde{\mathcal{G}}$. Deviating a bit from the method of Section~\ref{subsec:ultsml_phase} we relax the definition of $B_n^{\x,\y}$ {by} defining $\tilde{B}_n^{\x,\y}$ as the event that there exists a step minimizing path between $\x$ and $\y$ in $\tilde{\mathscr{G}}$ where the most powerful vertex of the path has a mark larger than~\smash{$\ell_{\gaus{\frac{n}{2}}}$.} Then the term $\sum_{n=1}^{2\Delta} \P_{\x,\y}(B_n^{\x,\y})$ in \eqref{eq:truncmombound} can be replaced by 
\smash{$\sum_{n=1}^{2\Delta} \P_{\x,\y}(\tilde{B}_n^{\x,\y})$}.}
\bigskip 

We characterize the paths used in $\tilde{B}_n^{\x,\y}$ by their powerful vertices, {as done for regular paths} in \cite{GraLM2021}. {A vertex $\x_k$ of a path $(\x_0,\ldots,\x_n)$ is \emph{powerful} if $t_i\geq t_k$ for all $i=0,\ldots, k-1$ or if $t_i\geq t_k$ for all $i=k+1,\ldots, n$. Note that by definition the vertices $\x = \x_0$ and $\y = \x_n$ are always powerful. The indices of the powerful vertices are a subset of $\set{0,\ldots,n}$ which we denote by $\set{i_0,i_1,\ldots,i_{m-1},i_m}$, where $m+1$ is the number of powerful vertices in a path and $i_0=0$, $i_m=n$. As the most powerful vertex of a good path fulfils the assumption above, there exists a $k\in\{0,\ldots,m\}$ such that $\x_{i_k}$ is the most powerful vertex of the path. We decompose the good paths at the powerful vertices first and then proceed to decompose the path between powerful vertices $\x_{i_j}$ and $\x_{i_{j-1}}$ in the same way as done for the random variable $K(\x_{i_{j-1}},\x_{i_j}, i_{j}-i_{j-1})$ in Section \ref{subsec:ultsml_phase}.} Using Mecke's equation, we get
\begin{align*}
&\P_{\x,\y}(\tilde{B}_n^{(\x,\y)})\\
&\leq \sum_{m=1}^n \sum_{k=0}^m \int\limits_{\substack{(\mathbb{R}^d\times (0,1])^{m-1}\\ t_0>\ldots>t_k> \ell_{\gaus{\frac{n}{2}}}\\t_k<\ldots<t_m}} \bigotimes\limits_{j=1}^{m-1}\mathd \x_j \sum_{\substack{\set{i_1,\ldots,i_{m-1}}\\ \subset \set{1,\ldots,n-1}}} \prod\limits_{j=1}^m e_K(\x_{i_{j-1}},\x_{i_j},i_j-i_{j-1})\\
&\leq \sum_{m=1}^n \binom{n-1}{m-1}C^{n-m}\sum_{k=0}^m \int\limits_{\substack{(\mathbb{R}^d\times (0,1])^{m-1}\\ t_0>\ldots>t_k> \ell_{\gaus{\frac{n}{2}}}\\t_k<\ldots<t_m}} \bigotimes\limits_{j=1}^{m-1}\mathd \x_j \\
& \phantom{wiggledeewoggledee}\prod\limits_{j=1}^m \rho\left(\kappa^{-1/\delta}(t_{j-1}\wedge t_j)^\gamma (t_{j-1}\vee t_j)^{1-\gamma} \abs{x_j-x_{j-1}}^d \right).
\end{align*}
Then, following the same arguments as in the proof of Lemma \ref{lem:spatial_integration}, there exists $a>0$ and $\tilde{\kappa}>0$ such that \smash{$\P_{\x,\y}(\tilde{B}_n^{(\x,\y)})$} is bounded by
\begin{align*}
\abs{x-y}^{-a}\sum_{m=1}^n \binom{n-1}{m-1}C^{n-m}\tilde{\kappa}^{m-1}\sum_{k=0}^m \!\!\!\int\limits_{\substack{(0,1]^{m-1}\\ t_0>\ldots>t_k> \ell_{\gaus{\frac{n}{2}}}\\t_k<\ldots<t_m}} \!\!\!\!\!\!\bigotimes\limits_{j=1}^{m-1}\mathd t_j \prod\limits_{j=1}^m (t_{j-1}\wedge t_j)^{-\gamma} (t_{j-1}\vee t_j)^{\gamma-1}.
\end{align*}
By a simple calculation\footnote{For details see the proof of \cite[Lemma 2.5]{GraLM2021}, which differs from this calculation only in the fact that the mark of the first and last vertex of a path is fixed in our setting.} the sum over $k$ on the right-hand side can be bounded by {a constant multiple of}
\[
\sum_{k=1}^{m-1} \binom{m-2}{k-1} \frac{\ell_{\gaus{\frac{n}{2}}}^{1-2\gamma} \log^{m-2}\big({\ell^{-1}_{\gaus{\frac{n}{2}}}}\big)}{(m-2)!} + \frac{2\ell_0^{-1}\log^{m-1}\big({\ell^{-1}_0}\big)}{(m-1)!}.
\]
Since $\sum_{k=1}^{m-1} \binom{m-2}{k-1} \leq 2^{m-2}$ and the second summand can be bounded by a multiple of the first, there exists a constant $c_1>0$ such that \smash{$\P_{\x,\y}(\tilde{B}_n^{(\x,\y)})$} is bounded by 
\begin{align*}
&c_1\abs{x-y}^{-a}\sum_{m=1}^n \binom{n-1}{m-1}C^{n-m}\tilde{\kappa}^{m-1}\frac{\ell_{\gaus{\frac{n}{2}}}^{1-2\gamma} \log^{m-2}(\ell_{\gaus{\frac{n}{2}}}^{-1})2^{m-2}}{(m-2)!}\\
&\leq c_1\abs{x-y}^{-a}\sum_{m=1}^n \binom{n-1}{m-1}C^{n-m}\tilde{\kappa}^{m-1}B^{(2\gamma-1)\frac{n}{2}\log^p\left(\frac{n+2}{2}\right)}\frac{ n^{m-2}\log^{p(m-2)}(\frac{n+2}{2})}{(m-2)!},
\end{align*}
where we have used \eqref{eq:eta_bound_non} for the second inequality and denoted  $(-1)!=1$ and {$\tilde{\kappa}$ might have changed between the steps}. Since $$\frac{n^{m-2}\log^{p(m-2)}(\frac{n+2}{2})}{(m-2)!}\leq \frac{n^{n-2}\log^{p(n-2)}(\frac{n+2}{2})}{(n-2)!}$$ for all $m=1,\ldots,n$ and $\sum_{m=1}^n \binom{n-1}{m-1} \leq 2^n$, there exists a constant $c_2\geq 2(C\vee \tilde{\kappa})$ such that the right-hand side above can be further bounded by
\[
c_1\abs{x-y}^{-a}c_2^n B^{(2\gamma-1)\frac{n}{2}\log^p\left(\frac{n+2}{2}\right)}\frac{ n^{n-2}\log^{p(n-2)}(\frac{n+2}{2})}{(n-2)!}.
\]
By Stirling's formula we have that $\frac{n^{n-2}}{(n-2)!}\leq e^n$. Hence, there exists $c_3>0$ such that
\begin{align*}
\sum_{n=1}^{2\Delta} \P_{\x,\y}(\tilde{B}_n^{(\x,\y)}) &\leq c_1\abs{x-y}^{-a} \sum_{n=1}^{2\Delta} c_3^n e^{p(n-2)\log\log\left(\frac{n+2}{2}\right)}B^{(2\gamma-1)\frac{n}{2}\log^p\left(\frac{n+2}{2}\right)}\\
&\leq c_1\abs{x-y}^{-a} 2\Delta c_3^{2\Delta} e^{p(2\Delta-2)\log\log\left(\Delta+1\right)}B^{(2\gamma-1)\Delta\log^p\left(\Delta+1\right)}.
\end{align*}
We can see that $B^{(2\gamma-1)\Delta\log^p(\Delta+1)}$ dominates the right-hand side in the sense that there exist constants $c_4,c_5>0$ such that
\begin{align*}
\sum_{n=1}^{2\Delta} \P_{\x,\y}(\tilde{B}_n^{(\x,\y)}) &\leq c_4\abs{x-y}^{-a}B^{c_5\Delta\log^p(\Delta+1)}\\
&= c_4 \exp\big(c_5\log(B)\Delta\log^p(\Delta+1) - \log(\abs{x-y}^a)\big).
\end{align*}
We now set
\[
\Delta\leq \frac{\log(\abs{x-y}^a)}{c_5\log(B)(\log\log(\abs{x-y}^a))^p}-1.
\]
Then, we have that 
\begin{align*}
c_5\log(B)\Delta\log^p(\Delta+1)& - \log(\abs{x-y}^a)\\
& \leq \log(\abs{x-y}^a) \big(1-\tfrac{p\log\log\log(\abs{x-y}^a)}{\log\log(\abs{x-y}^a)}\big)^p - \log(\abs{x-y}^a).
\end{align*}
A second order Taylor expansion shows that the right-hand side converges to $-\infty$ as $\abs{x-y} \to \infty$. Hence, for such a choice of $\Delta$, we have 
$\P_{\x,\y}\set{\mathd(\x,\y) \leq 2\Delta} \leq \varepsilon + o(1)$
which implies the statement of Proposition \ref{prop:smlthm}. 

\section{Proof of the upper bound for the chemical distance}

To prove the upper bound for the chemical distance, we show the following proposition.\medskip

\begin{prop}\label{prop:uppthm}
Suppose Assumption 1.2 holds for  $\gamma>\frac{\delta}{\delta + 1}$. Then for any  vertex~$\x$ there exists a path with no more than 
$$(4 + o(1)) \frac{\log\log \abs{x}}{\log\big(\frac{\gamma}{\delta(1-\gamma)}\big)}$$ 
vertices connecting $\0$ and $\x$, with high probability under $\P_{\0,\x}(\phantom{i}\cdot\phantom{i} \mid \0 \leftrightarrow \x )$  as $\abs{x} \to \infty$.
\end{prop}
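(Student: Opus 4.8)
The plan is to construct explicitly, with high probability on the event $\{\0 \leftrightarrow \x\}$, a short path joining $\0$ and $\x$ that realises the optimal connection strategy sketched in Section~\ref{subsec:outline}: starting from each endpoint, build a chain of increasingly powerful vertices (each consecutive pair linked through a single connector), until the two chains reach vertices powerful enough to be joined in few steps through a ``core'' of very small-mark vertices. The length budget $(4+o(1))\log\log|x|/\log(\gamma/(\delta(1-\gamma)))$ splits as $2+2$: a factor $2$ for the two halves of the path, and within each half a further factor $2$ because each step of the power-increasing chain costs two edges (powerful vertex $\to$ connector $\to$ next powerful vertex).

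First I would set up the greedy construction near a single vertex. Fix a vertex $\x_0$ with mark $t_0$ bounded away from $0$. Under Assumption~\ref{ass:main2}, the expected number of vertices $\z=(z,u)$ with $u \in (u_{\mathrm{lo}}, u_{\mathrm{hi}}]$ lying in a suitable spatial shell around $x_0$ and connected to $\x_0$ via a connector of mark comparable to $t_0$ can be bounded below; the relevant integral is of the same shape as $e_K(\x,\y,2)$ but now yielding a \emph{lower} bound because of the $1\wedge\kappa(\cdot)$ in Assumption~\ref{ass:main2} and the conditional independence of edges. I would choose the mark thresholds recursively as $\eta_{n+2}^{1-\gamma} \asymp \eta_n^{\gamma/\delta}$, mirroring \eqref{eq:truncdef}, so that after $n$ steps one reaches a vertex of mark roughly $\exp(-B(\gamma/(\delta(1-\gamma)))^{n/2})$ within spatial distance that stays polynomially controlled. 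A first/second moment (Chebyshev or Paley--Zygmund) argument over the Poisson process, using that conditionally independent edges make the relevant counts concentrate, shows that such a connector-plus-next-powerful-vertex exists at each stage with probability tending to $1$; a union bound over the $O(\log\log|x|)$ stages keeps the total failure probability small. After $\sim 2\cdot\frac{\log\log|x|}{\log(\gamma/(\delta(1-\gamma)))}$ edges from $\x$ one reaches a vertex $\x^*$ whose mark is smaller than $|x|^{-d}$ up to constants, i.e.\ a vertex so powerful that it connects directly (or through one connector) to a positive fraction of all vertices in a ball of radius of order $|x|$.

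Then I would run the same construction from $\0$ and from $\x$ simultaneously, obtaining highly powerful vertices $\x^*$ near $\0$ and $\y^*$ near $\x$, and close the gap: since both $\x^*$ and $\y^*$ have marks below $c|x|^{-d}$, Assumption~\ref{ass:main2} gives that each is connected directly to the other, or both to a common connector, with probability bounded away from $0$ (the connection probability $1\wedge\kappa(t\wedge s)^{-\delta\gamma}|x-y|^{-\delta d}$ is $\asymp 1$ for such marks and $|x^* - y^*| = O(|x|)$); iterating over $O(1)$ attempts makes this succeed whp. This contributes only $O(1)$ to the length. Concatenating the three pieces gives a path of the asserted length. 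The final point is conditioning: the construction must be carried out on $\{\0 \leftrightarrow \x\}$. Here I would argue that the unconditional probability of connection is itself bounded below (e.g.\ of polynomial order in $|x|^{-1}$, or by a renormalisation/percolation argument showing $\P(\0\leftrightarrow\x)$ decays no faster than the failure probability of the construction), so that dividing by $\P_{\0,\x}(\0\leftrightarrow\x)$ does not destroy the $o(1)$ error — alternatively, I would show the short path exists whp on a coupling with an event of probability close to $1$ that already forces $\0 \leftrightarrow \x$.

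The main obstacle I expect is the second-moment / concentration step in the greedy layer-by-layer construction: one must show that at each of the $\Theta(\log\log|x|)$ stages a suitable connector and next-generation powerful vertex exist, with the spatial locations staying controlled so that later stages still have enough room, and the failure probabilities summing to $o(1)$. Because Assumption~\ref{ass:main2} only gives a one-sided bound and only conditional independence of edges (not of the Poisson points themselves), the variance estimates require care: the dominant contribution to the second moment should come from pairs of candidate vertices that are far apart, and one needs the spatial decay in Assumption~\ref{ass:main2} to kill the diagonal/clustering terms. Managing the conditioning on $\{\0\leftrightarrow\x\}$ without a FKG-type inequality (unavailable here, as noted after Assumption~\ref{ass:main1}) is the second delicate point, to be handled by establishing an a~priori lower bound on $\P(\0\leftrightarrow\x)$ of the right order.
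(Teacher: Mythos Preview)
Your overall strategy---climb to increasingly powerful vertices via connectors from both ends and join in the middle---matches the paper's, and your choice of mark thresholds iterating like $\eta_{n+2}^{1-\gamma}\asymp\eta_n^{\gamma/\delta}$ is correct. The paper's implementation differs in two respects worth noting: it organises the climbing as passage through a fixed hierarchy of layers $L_k=\mathcal X\cap H(x)\times(0,(4|x|)^{-d\alpha_1^{-k}}]$ inside a single box $H(x)$ of side $\sim|x|$ (so the ``meeting in the middle'' is automatic: both chains land in $L_K$ and $\diam(L_K)\le 4K$), and for the ``connector exists'' step it uses a pure first-moment argument rather than Paley--Zygmund: under Assumption~\ref{ass:main2} the number of connectors between two suitably placed powerful vertices is Poisson with large mean, so at least one exists with probability $1-\exp(-ct^{(\alpha_2-\alpha_1\gamma)\delta-\gamma})$ (Lemma~\ref{lem:connector_existence}). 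This already gives summable, stretched-exponential failure probabilities once the starting vertex is powerful, with no variance computation needed.

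The genuine gap is your treatment of the conditioning and the start of the chain. The greedy construction does \emph{not} succeed with probability tending to $1$ at the first few stages: if $\0$ has a generic mark close to $1$, the probability that it even has a neighbour---let alone a connector leading to a more powerful vertex---is bounded away from $1$ uniformly in $|x|$. Hence the event ``the short path exists'' has unconditional probability bounded away from $1$, and dividing by $\P_{\0,\x}(\0\leftrightarrow\x)$ does not help: that probability converges to a positive constant (roughly $\theta^2$), not to zero polynomially as you suggest. What is actually needed is to show that \emph{on} $\{\0\leftrightarrow\x\}$ the chain can be started. The paper's device is: (i) observe that $\{\0\leftrightarrow\x\}$ coincides, up to $o(1)$, with $\{\0,\x\in K_\infty\}$; (ii) \emph{sprinkle}---colour vertices independently black/red with probabilities $b,1-b$---so that on $\{\0\in K_\infty^b\}$ the black infinite cluster contains vertices of arbitrarily small mark, whence $\0$ is at some finite, $|x|$-independent black-graph distance $D(s)$ from a vertex of mark $<s$; (iii) run the layer-climbing from that vertex using only \emph{red} vertices, which are independent of the black graph and hence of the initial path. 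Letting $s\to 0$ and then $b\nearrow 1$ (using continuity of the percolation probability in $b$) recovers the full statement. Your closing remark about ``a coupling with an event of probability close to $1$ that already forces $\0\leftrightarrow\x$'' points in this direction, but sprinkling plus the infinite-component argument is the concrete mechanism that makes it work; without it the first step of your construction has no reason to succeed on the conditioning event.
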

\medskip

To prove this result, we rely on a strategy introduced in \cite{HirM2020}. Since the vertices of~$\mathscr{G}$ are given by the points of a Poisson process, the most powerful vertex inside a box with volume of order \smash{$\abs{x}^d$} around the midpoint between $\0$ and $\x$  typically has a mark smaller than $\abs{x}^{-d}\log\abs{x}$. Hence, it is sufficient to construct a short enough path from $\0$ resp.\ $\x$ to this most powerful vertex inside the box. Here, as in Section \ref{subsec:outline}, the typical connection type between two powerful vertices is crucial. For $\gamma>\frac{\delta}{\delta+1}$ we expect two powerful vertices to be connected via a vertex with larger mark, which we again call a \emph{connector}. In fact, the following lemma shows that for a powerful vertex with mark $t$ and a suitable vertex with a sufficiently smaller mark, the probability that there exist no connector which neighbours each of the two vertices is decaying exponentially fast as the mark $t$ gets small. 
This is a corollary of \cite{HirM2020} and follows with the same calculations as in \cite[Lemma 3.1]{GraLM2021}. We now fix for the rest of the section
\[
\alpha_1 \in \big(1,\tfrac{\gamma}{\delta(1-\gamma)}\big)\quad \text{and}\quad \alpha_2 \in \left(\alpha_1,\tfrac{\gamma}{\delta}(1+\alpha_1\delta)\right),
\]
noting that our assumptions ensure that the intervals are nonempty.
\begin{lemma}\label{lem:connector_existence}
There exists $c>0$ such that for two given vertices $\x = (x,t), \y=(y,s) \in \X$ with $t,s\leq \frac{1}{4}$, $s\leq t^{\alpha_1}$ and $\abs{x-y}^d\leq t^{-\alpha_2}$ we have
\[
\P_{\x,\y}\set{\x\overset{2}{\conn} \y} \geq 1 - \exp\big(-ct^{(\alpha_2-\alpha_1\gamma)\delta-\gamma}\big).
\]
\end{lemma}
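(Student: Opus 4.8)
The plan is to estimate the probability that $\x$ and $\y$ have no common neighbour by a second-moment / Poisson-void argument. Given $\X$ and the independence of edges coming from Assumption~\ref{ass:main2}, a vertex $\z=(z,u)$ is a connector for $\x$ and $\y$ with probability at least $\alpha^2\bigl(1\wedge \kappa\, t^{-\delta\gamma}\abs{x-z}^{-\delta d}\bigr)\bigl(1\wedge\kappa\, s^{-\delta\gamma}\abs{y-z}^{-\delta d}\bigr)$, where we used that $t\wedge s = s$ and $t\wedge u = t$, $s\wedge u = s$ because $u$ will be taken larger than $t,s$ anyway and the min is on the smaller of the two marks; in any case we may lower-bound the connection probability using the mark $t$ (resp.\ $s$) of the fixed endpoint. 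First I would restrict attention to candidate connectors $\z=(z,u)$ with $z$ in the ball $B_r(x)$ for a suitable radius $r$ with $r^d \asymp t^{-\alpha_2}$ (so that, by the triangle inequality and $\abs{x-y}^d\le t^{-\alpha_2}$, also $\abs{y-z}^d \lesssim t^{-\alpha_2}$) and with mark $u$ in a window $[t^{\beta},2t^{\beta}]$ for an exponent $\beta\in(0,1)$ still to be optimised, chosen so that $u>t\vee s$ holds and so that on this region both factors $\kappa t^{-\delta\gamma}\abs{x-z}^{-\delta d}$ and $\kappa s^{-\delta\gamma}\abs{y-z}^{-\delta d}$ are $\ge 1$ (this is where $s\le t^{\alpha_1}$ and the upper bound on $r$ enter). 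On that region the per-connector probability is bounded below by the constant $\alpha^2$.

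Next I would use the thinning/void property of the Poisson process: the number of points of $\X$ in $B_r(x)\times[t^\beta,2t^\beta]$ is Poisson with parameter $\asymp r^d\, t^\beta \asymp t^{\beta-\alpha_2}$, and conditionally on $\X$ the events ``$\z$ is a connector'' are independent across $\z$ with probability $\ge\alpha^2$ each. Hence $\P_{\x,\y}\{\x\overset{2}{\conn}\y\}\ge 1-\E\bigl[(1-\alpha^2)^{N}\bigr]$ where $N\sim\mathrm{Poisson}(\lambda)$ with $\lambda\asymp t^{\beta-\alpha_2}$, and the generating function of the Poisson distribution gives $\E[(1-\alpha^2)^N]=\exp(-\alpha^2\lambda)$, i.e.\ a bound $1-\exp(-c\, t^{\beta-\alpha_2})$. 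To match the stated exponent $(\alpha_2-\alpha_1\gamma)\delta-\gamma$ one then chooses $\beta$ so that the constraints (connection-probability factors $\ge 1$ on the region, and $t^\beta>t\vee s$) are tight; tracking the exponents, the factor $\kappa t^{-\delta\gamma}\abs{x-z}^{-\delta d}\ge1$ needs $\abs{x-z}^{d}\le \kappa^{1/\delta} t^{-\gamma}$ which is weaker than $r^d\asymp t^{-\alpha_2}$ unless we instead shrink $r$; the more efficient route (and the one that yields the stated exponent) is to let the connector's mark $u$ be as large as the geometry allows, $u^{1-\gamma}\asymp$ (something like) $t^{\gamma/\delta}/(r^d)$, so that the product of the two $\rho$-type factors is of constant order and the admissible mark window has width $\asymp u\asymp t^{(\alpha_2-\gamma)/(1-\gamma)}$... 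I would redo this bookkeeping carefully so that $\lambda\asymp r^d u\asymp t^{-\alpha_2}\cdot t^{(\alpha_2-\alpha_1\gamma)\delta/1 \cdots}$ reproduces $t^{(\alpha_2-\alpha_1\gamma)\delta-\gamma}$; since the paper says this follows ``with the same calculations as in \cite[Lemma 3.1]{GraLM2021}'', I would cite that computation for the precise exponent arithmetic rather than reproduce it.

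Finally I would assemble: conditionally on $N\ge1$ there is at least one connector with probability $\ge 1-(1-\alpha^2)^N$, and integrating against the Poisson law of $N$ gives the exponential lower bound; the constraints $t,s\le\frac14$ and $s\le t^{\alpha_1}$, $\abs{x-y}^d\le t^{-\alpha_2}$ guarantee the region used is nonempty and that all the mark comparisons ($u>t\vee s$, minima realised at $t$ resp.\ $s$) hold, while $\alpha_1<\frac{\gamma}{\delta(1-\gamma)}$ and $\alpha_2<\frac{\gamma}{\delta}(1+\alpha_1\delta)$ ensure the resulting exponent $(\alpha_2-\alpha_1\gamma)\delta-\gamma$ is positive, which is what makes the bound nontrivial as $t\downarrow0$. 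The main obstacle is the exponent bookkeeping: one must simultaneously optimise the spatial scale $r$ and the mark window $[t^\beta,2t^\beta]$ of the candidate connectors so that (a) both connection-probability factors stay of order one on the whole region (no loss there), (b) the mark stays above $t\vee s$, and (c) the resulting Poisson intensity $\lambda\asymp r^d t^\beta$ has exponent exactly $(\alpha_2-\alpha_1\gamma)\delta-\gamma$; getting all three to align is the delicate part, and it is precisely the content of the referenced computation in \cite{GraLM2021, HirM2020}.
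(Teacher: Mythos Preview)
Your overall framework---restrict to a region of candidate connectors, use Poisson thinning, and bound the void probability via the generating function---is exactly right and matches the paper. But the specific region you propose does not work, and the ``optimisation over $\beta$'' you describe is based on a misunderstanding of Assumption~\ref{ass:main2}.

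The key point you miss is that in Assumption~\ref{ass:main2} the lower bound on $\P_{\x,\z}\{\x\sim\z\}$ depends only on $t\wedge u$, so once $u>t$ the connector's mark $u$ plays \emph{no role whatsoever} in the connection probabilities. There is therefore nothing to optimise in the mark window: one simply takes $u\in[\tfrac12,1]$, which contributes a harmless factor $\tfrac12$ and automatically guarantees $u>t\vee s$ since $t,s\le\tfrac14$. Your proposed window $[t^\beta,2t^\beta]$ only throws away Poisson mass.

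Second, the paper does \emph{not} try to make both connection factors of order one. It takes the spatial region $|x-z|<t^{-\gamma/d}$ (not $r^d\asymp t^{-\alpha_2}$), which makes the factor for $\x\sim\z$ exactly $\alpha(1\wedge\kappa)$, a constant. The factor for $\y\sim\z$ is then genuinely small: since $|y-z|\le |x-y|+t^{-\gamma/d}\lesssim t^{-\alpha_2/d}$ and $s\le t^{\alpha_1}$, one gets $\alpha\kappa\, s^{-\gamma\delta}|y-z|^{-d\delta}\gtrsim t^{(\alpha_2-\alpha_1\gamma)\delta}$. Multiplying by the region volume $\asymp t^{-\gamma}\cdot\tfrac12$ gives the Poisson parameter $\lambda\gtrsim t^{(\alpha_2-\alpha_1\gamma)\delta-\gamma}$ directly, with no optimisation needed. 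The asymmetry---one connection automatic, the other costly---is the whole point, and it is precisely what makes the exponent come out right in two lines rather than requiring the delicate balancing act you sketch.
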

\begin{proof}
We only consider connectors $\z = (z,u)$ with $u\geq \frac{1}{2}$ and {$\abs{x-z}<t^{-\frac\gamma{d}}$}. Then, by the thinning theorem \cite[Theorem 5.2]{LasP2017} and Assumption \ref{ass:main2} the number of such connectors is Poisson distributed with its mean bounded from below by
\begin{align*}
\int_{\frac{1}{2}}^1 {\int_{B_{t^{-\gamma/d}}(x)} }&{\alpha^2}\rho(\kappa^{-1/\delta})\rho(\kappa^{-1/\delta}s^\gamma\abs{y-z}^d)  \, dz \, du\\
&\geq \frac{{\alpha^2}\rho(\kappa^{-1/\delta})}{2}t^{-\gamma}\rho({\kappa^{-1/\delta}s^\gamma(t^{-\gamma/d}+\abs{x-y})^d}),
\end{align*} 
where $\rho(x) := 1\wedge x^{-\delta}$ as in the previous section.
As $\abs{x-y}^d < t^{-\alpha_2}$ and $s<t^{\alpha_1}$ this can be bounded from below by
${c} t^{(\alpha_2-\alpha_1\gamma)\delta-\gamma} $,
where \smash{$c = ({{\alpha^2}\rho(\kappa^{-1/\delta})\kappa}{2^{-(d\delta+1)}})\wedge$} \smash{$({\alpha^2}\rho(\kappa^{-1/\delta})/2)$}.
\end{proof}

We now look into a box
$H(x) = \frac{x}{2}+[-2\abs{x},2\abs{x}]^d$
and
introduce a hierarchy of layers $L_1\subset L_2\subset \ldots \subset \mathcal{X} \cap H(x)\times (0,1)$ of vertices inside the box containing $\0$ and~$\x$. While the layer $L_1$ only contains vertices with very small mark, vertices with larger and larger  marks are included in layers with larger index. More precisely, as in~\cite{HirM2020} we set 
\[
L_k = \mathcal{X} \cap H(x)\times \big(0,(4\abs{x})^{-d\alpha_1^{-k}}\big]
\]
and
\[
K = \min\big\{k\geq 1 \colon (4\abs{x})^{-d\alpha_1^{-k}} \geq (\log (4\abs{x})^d)^{-\eta^{-1}}\big\}-1,
\]
where {$\eta = (\gamma - (\alpha_2-\alpha_1\gamma)\delta)\wedge (\alpha_2-\alpha_1) >0$}. {As the vertex set $\mathcal{X}$ is a Poisson process, by Lemma \ref{lem:connector_existence} for a given vertex in layer $L_{k+1}$ there exists with high probability a suitable vertex in layer $L_k$ such that  both vertices are connected via a connector with high probability. As in~\cite{HirM2020} and~\cite{JacM2017} we can use an estimate as  in Lemma~\ref{lem:connector_existence} to see that a vertex in $L_1$ is either the most powerful vertex in the box or connected to it via a connector, with high probability as $\abs{x} \to \infty$. Hence we get that $\diam(L_K) \leq 4K$.}\bigskip
\pagebreak[3]

Since $K$ is of order \smash{$(1+o(1))\frac{\log\log\abs{x}}{\log{\alpha_1}}$}, to finish the proof it suffices to show that the vertices~$\0$ and $\x$ are connected to the layer $L_K$ in fewer than $o(\log\log\abs{x})$ steps. To do so, we first show that~$\0$ (resp. $\x$) is connected to a vertex with sufficiently small mark and within distance smaller than $\abs{x}$ in finitely many steps. Then, we show that this vertex is connected to a vertex of $L_K$ in $o(\log\log\abs{x})$ steps. To keep the existence of these two paths sufficiently independent we rely on a sprinkling argument. For $b<1$ we assign independently to each vertex in $\X$ the color \emph{black} with probability $b$ and \emph{red} with probability $r=1-b$. Then, we denote by $\mathscr{G}^b$ the graph induced by restricting $\mathscr{G}$ to the black vertices and the edges between them. In the same way we define $\mathscr{G}^r$ for the black vertices. Note that $\mathscr{G}^r \cup \mathscr{G}^b$ is a subgraph of $\mathscr{G}$. \pagebreak[3]\bigskip

We use the black vertices to ensure the existence of the first part of the path in~$\mathscr{G}^b$. Thus, we define for $\0$ (and similarly for $\x$) the event \smash{$E^b(D,s,v)$} that there exists a black vertex $\z$ with mark smaller than $s$ and within distance shorter than $v$ such that there exists a path in $\mathscr{G}^b$ of length smaller $D$ between $\0$ and $\z$. Then, given $\z$, we use the red vertices to show that $\z$ is connected to the layer $L_K$ in sufficiently few steps. We denote by $L_k^r$ the restriction of $L_k$ to its red vertices. {Observe that we still have $\diam(L_K^r) \leq 4K$ in $\mathscr{G}^r$, as Lemma \ref{lem:connector_existence} restricted to $\mathscr{G}^r$ also holds if the constant $c$ is multiplied by $r$.} We define $F$ to be the event that $\z$ is connected by a path of length smaller than $o(\log\log\abs{x})$ to $L_K^r$ in $\mathscr{G}^r$.
{Note that the event $\0 \conn \x$ implies that with high probability  $\0$ and $\x$ are part of the unique infinite component $K_\infty$ of $\mathscr{G}$, since $\P_{\0,\x}(\set{\0 \conn \x}\backslash \set{\0,\x \in K_\infty})$ converges to zero as $\abs{x}\to \infty$. This is a consequence of the uniqueness of the infinite component $K_\infty$ as $\set{\0 \conn \x}\backslash \set{\0,\x \in K_\infty}$ implies that $\0 $ and $\x$ are part of the same finite component whose asymptotic proportion of vertices is zero.} {Thus, to prove Proposition \ref{prop:uppthm} 
it is sufficient to show that for any $s>0$ there exists a almost surely finite random variable $D(s)$ such that
\[
\lim_{b\nearrow 1} \liminf_{s\searrow 0} \liminf_{\abs{x}\to \infty} \P_\0\left(\set{\0 \in K_\infty^b} \cap E^b(D(s),s,\abs{x}) \cap F\right) \geq \theta
\]
where $\theta$ is the asymptotic proportion of vertices in the infinite component of $\mathscr{G}$ and $K_\infty^b$ is the infinite component of $\mathscr{G}^b$.} Note that, as \smash{$\gamma>\frac{\delta}{\delta + 1}$}, the critical percolation parameter of the graph $\mathscr{G}$ is $0$ by~\cite{GraLM2021}, and therefore $K_\infty^b$ exists and is unique. Now the probability above can be bounded from below by
\begin{align*}
&\P_\0\left(\set{\0 \in K_\infty^b} \cap E^b(D(s),s,\abs{x}) \cap F\right)\\
&\geq \P_\0\set{\0 \in K_\infty^b} - \P_\0\left(\set{\0 \in K_\infty^b}\backslash E^b(D(s),s,\abs{x})\right) - \P_\0\left( E^b(D(s),s,\abs{x})\backslash F\right)\\
&= \P_\0\set{\0 \in K_\infty^b} - \P_\0\left(\set{\0 \in K_\infty^b}\backslash E^b(D(s),s,\abs{x})\right) - \E_\0\left[(1-\P_\z(F\vert \mathscr{G}^b))\mathbf{1}_{E^b(D(s),s,\abs{x})}\right].
\end{align*}
We show in the following two lemmas that the last two terms converge to $0$ as $s\to 0$ and $\abs{x}\to \infty$ as in \cite{HirM2020}, which yields
\[
\liminf_{s\searrow 0} \liminf_{\abs{x}\to \infty} \P_\0\left(\set{\0 \in K_\infty^b} \cap E^b(D(s),s,\abs{x}) \cap F\right) \geq \theta_b,
\]
where $\theta_b$ is the asymptotic proportion of vertices in the infinite component of $\mathscr{G}^b$. As in \cite[Proposition 7]{JacM2017} it can be shown that the percolation probability $\theta_b$ is continuous in $b$ such that $\theta_b$ converges to $\theta$ as $b\nearrow 1$, which completes the proof.

\begin{lemma}\label{lem:upper_bound_firstpath}
Let $b,s>0$. Then, there exists an almost surely finite random variable $D(s)$ such that
\[
\lim_{\abs{x}\to \infty} \P_\0\left(\set{\0 \in K_\infty^b}\backslash E^b(D(s),s,\abs{x})\right) = 0.
\]
\end{lemma}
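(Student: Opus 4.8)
The plan is to take $D(s)$ to be the length of a shortest path in $\mathscr{G}^b$ joining $\0$ to the set $V_s:=\{\z\in\mathcal{X}:\z\text{ black},\,t_\z<s\}$ of powerful black vertices (and $D(s):=0$ on $\{\0\notin K_\infty^b\}$), so that the heart of the matter is to show $D(s)<\infty$ almost surely. Granting this, the lemma follows by a soft argument. On $\{\0\in K_\infty^b\}$ pick a vertex $\z=(z,t)\in V_s$ realising the distance $D(s)$; since the event $E^b(D,s,v)$ is non-decreasing in $v$, this $\z$ witnesses $E^b(D(s),s,|x|)$ for every $|x|>|z|$. As $|z|<\infty$ almost surely on $\{\0\in K_\infty^b\}$, the indicator $\mathbf{1}_{\{\0\in K_\infty^b\}\setminus E^b(D(s),s,|x|)}$ decreases to $0$ almost surely as $|x|\to\infty$, and dominated convergence gives $\P_\0\bigl(\{\0\in K_\infty^b\}\setminus E^b(D(s),s,|x|)\bigr)\to 0$.

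It remains to prove $D(s)<\infty$ on $\{\0\in K_\infty^b\}$, and by uniqueness of the infinite component this reduces to the claim that $K_\infty^b\cap V_s\neq\emptyset$ almost surely: any powerful black vertex lying in $K_\infty^b$ is then connected to $\0$ within $\mathscr{G}^b$. Consider the point process $\Pi_s:=\{\z\in\mathcal{X}:\z\text{ black},\,t_\z<s,\,\z\in K_\infty^b\}$. It is a translation-equivariant measurable functional of the marked Poisson process together with the independent colours and the independent edge randomness, which is an ergodic system, so $\Pi_s$ is stationary and the event $\{\Pi_s=\emptyset\}$ has probability $0$ or $1$. Its intensity, by the Mecke equation, equals $\int_0^s\P_{(0,t)}\bigl((0,t)\in K_\infty^b\bigr)\,\mathd t$, where $\P_{(0,t)}$ adds to the process a point at $0$ with mark $t$ and an independent colour. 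This integral is strictly positive: a vertex of sufficiently small mark belongs to the infinite component of $\mathscr{G}^b$ with positive probability --- this is exactly the estimate underlying $p_c=0$ in \cite{GraLM2021}, and under Assumption~\ref{ass:main2} it follows by iterating the connector bound of Lemma~\ref{lem:connector_existence} along the layers $L_1\subset L_2\subset\cdots$ used below --- so there is $t_0>0$ with $\P_{(0,t)}((0,t)\in K_\infty^b)>0$ for all $t<t_0$, whence the integral is at least $\int_0^{s\wedge t_0}\P_{(0,t)}((0,t)\in K_\infty^b)\,\mathd t>0$. A stationary point process of positive intensity cannot be almost surely empty, so $\Pi_s\neq\emptyset$ almost surely, as required.

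The argument is essentially soft; the one substantial input is the positivity of the percolation probability for powerful vertices, which is the step I expect to carry the real content and which is imported from \cite{GraLM2021}. As in \cite{HirM2020}, no sprinkling is needed for this lemma, since it concerns the graph $\mathscr{G}^b$ alone and the colours enter only through the fixed thinning parameter $b$; the sprinkling decomposition into $\mathscr{G}^b$ and $\mathscr{G}^r$ is used only to couple this path with the path constructed in the next lemma.
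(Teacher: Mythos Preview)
Your proof is correct and follows essentially the same approach as the paper: both define $D(s)$ as the graph distance in $\mathscr{G}^b$ from $\0$ to the nearest black vertex of mark below $s$, argue that this is almost surely finite on $\{\0\in K_\infty^b\}$ because $K_\infty^b$ contains vertices of arbitrarily small mark, and then let $|x|\to\infty$. The only difference is that the paper obtains the existence of small-mark vertices in $K_\infty^b$ by direct citation of \cite{GraLM2021}, whereas you supply your own ergodicity/positive-intensity argument for this step (still invoking \cite{GraLM2021} for the percolation input); this makes your write-up slightly more self-contained but does not change the structure of the proof.
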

\begin{proof}
Let $E^b(D,s)$ be the event that there exists a black vertex $\z$ with mark smaller than $s$ which is connected to $\0$ in less than $D$ steps. If $\0 \in K_\infty^b$ there exists a path connecting $\0$ to at least one black vertex with mark smaller than $s$. This follows from the results in \cite{GraLM2021} where it is shown that vertices with arbitrarily small mark are contained in the infinite component $K_\infty^b$. In fact, the random variable
$D_\infty = \min\set{D:\ \text{the event}\ E^b(D,s)\ \text{occurs}}$ is finite. Hence, if $\abs{x}$ is large enough, $E^b(D_\infty,s,\abs{x})$ occurs if $\0 \in K_\infty^b$ and thus 
$
\lim_{\abs{x}\to \infty} \P_\0\left(\set{\0 \in K_\infty^b}\backslash E^b(D(s),s,\abs{x})\right) = 0
$.
\end{proof}

\begin{lemma}\label{lem:upper_bound_secondpath}
{Let $b>0$ and, on $E^b(D(s),s,\abs{x})$, 
denote by $\z$ the black vertex $(x_0,t_0)$ with $t_0<s$
within graph distance $D(s)$ from $\0$ in $\mathscr{G}^b$, which 
minimizes $|x_0|$. Then,
\[
\lim_{s\searrow 0}\limsup_{\abs{x}\to \infty} \E_\0\left[(1-\P_\z(F\vert {\mathscr G}^b))\mathbf{1}_{E^b(D(s),s,\abs{x})}\right] = 0.
\]}
\end{lemma}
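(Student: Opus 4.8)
The plan is to show that, conditionally on the starting vertex $\z=(x_0,t_0)$ with $t_0<s$ lying within distance $D(s)$ of $\0$ in $\mathscr{G}^b$, the red graph $\mathscr{G}^r$ contains with overwhelming probability a short path from $\z$ to the red layer $L_K^r$ inside the box $H(x)$. The key structural fact is the hierarchy $L_1^r\subset L_2^r\subset\cdots\subset L_K^r$: a vertex of $L_{k+1}^r$ should be joined through a single connector to a suitable vertex of $L_k^r$, by Lemma~\ref{lem:connector_existence} applied in $\mathscr{G}^r$ (which only changes the constant $c$ to $rc$). First I would handle the \emph{first transition}: starting from $\z$, whose mark $t_0<s$ is already tiny, one climbs down into some layer $L_{k_0}^r$ with $k_0$ depending on $s$ but not on $|x|$. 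Concretely, choose $k_0=k_0(s)$ to be the largest $k$ with $(4|x|)^{-d\alpha_1^{-k}}\ge s$ for all $|x|$ large — equivalently $k_0$ is determined by $s$ alone for $|x|$ beyond a threshold — and use Lemma~\ref{lem:connector_existence} to connect $\z$ to a vertex of $L_{k_0}^r$ in $O(1)$ steps with failure probability going to $0$. The spatial hypothesis $|x_0-y|^d\le t_0^{-\alpha_2}$ of Lemma~\ref{lem:connector_existence} is met because $\z$ sits inside $H(x)$, a box of diameter $O(|x|)$, and the target layer $L_{k_0}^r$ is nonempty inside $H(x)$ with high probability by a Poisson count; the mark condition $s'\le t_0^{\alpha_1}$ is handled by first passing through an auxiliary vertex of mark comparable to $s^{\alpha_1}$ if necessary, exactly as in the hierarchy construction.

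Next I would iterate the climb through the layers: from a vertex in $L_{k}^r$ to a vertex in $L_{k+1}^r$ requires, by the construction of $L_{k+1}$ and Lemma~\ref{lem:connector_existence}, at most two edges, with failure probability at most $\exp(-c' (\text{mark})^{-\eta})$ summed over the relevant vertices inside the box. A union bound over the at most $K$ layers and over the (polynomially in $|x|$ many, by a Poisson tail estimate) candidate vertices at each level shows the probability that \emph{any} of these transitions fails tends to $0$ as $|x|\to\infty$, using that the marks decay to $(\log(4|x|)^d)^{-\eta^{-1}}$ and $\eta>0$. Chaining these, $\z$ reaches $L_K^r$ in at most $2(K-k_0)+O(1)\le 2K+O(1)$ steps, which is $(2+o(1))\frac{\log\log|x|}{\log\alpha_1}=o(\log\log|x|)$ once one recalls that the final limit $b\nearrow1$, $\alpha_1\nearrow\frac{\gamma}{\delta(1-\gamma)}$ is taken afterwards — more precisely, since $K$ itself is $o(\log\log|x|)\cdot\frac{1}{\log\alpha_1}$ and the statement of Proposition~\ref{prop:uppthm} only requires the path length to be $o(\log\log|x|)$ at this stage, with the sharp constant coming from $\diam(L_K^r)\le 4K$ and the later optimisation over $\alpha_1$. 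Thus on the event $F$ the vertex $\z$ is within $o(\log\log|x|)$ of $L_K^r$, so $\P_\z(F\mid\mathscr{G}^b)\to1$.

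Finally, to conclude, I would note that $E^b(D(s),s,|x|)$ is $\mathscr{G}^b$-measurable while $F$ depends only on $\mathscr{G}^r$, so conditionally on $\mathscr{G}^b$ (and hence on which vertex $\z$ is selected) the event $F$ has probability computed in the independent red graph; the bound $1-\P_\z(F\mid\mathscr{G}^b)\le \psi(|x|)$ for a deterministic $\psi(|x|)\to0$ (uniform over the location of $\z$ inside $H(x)$, which is all that matters) gives
\[
\E_\0\!\left[(1-\P_\z(F\mid\mathscr{G}^b))\mathbf 1_{E^b(D(s),s,|x|)}\right]\le \psi(|x|)\xrightarrow[|x|\to\infty]{}0,
\]
and then the outer $\lim_{s\searrow0}\limsup_{|x|\to\infty}$ is $0$ trivially. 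The main obstacle is the first transition: one must verify that the selected vertex $\z$, whose location $x_0$ is only controlled to lie within $|x|$ of $\0$, indeed admits a suitable partner in $L_{k_0}^r$ satisfying \emph{both} the mark constraint $s'\le t_0^{\alpha_1}$ and the spatial constraint $|x_0-y|^d\le t_0^{-\alpha_2}$ of Lemma~\ref{lem:connector_existence} — this needs $t_0^{-\alpha_2}\gtrsim|x|^d$, i.e. $t_0\lesssim |x|^{-d/\alpha_2}$, which is not guaranteed by $t_0<s$ alone for fixed $s$. This is resolved exactly as in \cite{HirM2020}: one does not jump directly from $\z$ to $L_{k_0}^r$, but performs $O(\log\log|x|/\log\alpha_1)$ further connector steps of the same hierarchical type starting from $\z$ itself — effectively treating $\z$ as the bottom of a second, shorter hierarchy growing up to scale $|x|$ — so that the number of extra steps is again $o(\log\log|x|)$ and is absorbed into the bound; handling this intermediate hierarchy carefully, with the union bound over its levels, is where the real work lies.
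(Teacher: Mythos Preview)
Your proposal has the right architecture—iterate connector steps from $\z$ using Lemma~\ref{lem:connector_existence} in the red graph—but two concrete errors would prevent it from closing.

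First, the step count is wrong. You claim reaching $L_K^r$ from $\z$ takes $O(\log\log|x|/\log\alpha_1)$ connector steps and call this $o(\log\log|x|)$; but $\alpha_1$ is a fixed constant, so that quantity is $\Theta(\log\log|x|)$, and adding it to $\diam(L_K^r)\le 4K$ would ruin the sharp constant in Proposition~\ref{prop:uppthm}. The point you are missing is that iterating $t_j\le t_{j-1}^{\alpha_1}$ gives $t_j\le t_0^{\alpha_1^j}<s^{\alpha_1^j}$, i.e.\ the marks decay \emph{doubly} exponentially in $j$. Hence one reaches a vertex of mark below $(\log|x|)^{-1/\eta}$, and therefore a vertex of $L_K^r$, in only $O(\log\log\log|x|)$ steps. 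The predefined layer ladder $L_{k_0},\ldots,L_K$ plays no role in this part of the argument; the hierarchy is built directly from $\z$, not routed through the $L_k$.

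Second, the convergence mechanism is inverted. You write $1-\P_\z(F\mid\mathscr{G}^b)\le\psi(|x|)$ with $\psi(|x|)\to 0$ and declare the outer limit in $s$ trivial. In the paper the failure probability of the hierarchy from $\z$ is bounded by $2\sum_{j\ge 1}\exp\big(-c\,s^{-\eta\alpha_1^{j-1}}\big)$, which depends on $s$ and not on $|x|$; it is the limit $s\searrow 0$ that drives this sum to zero. The $\limsup_{|x|\to\infty}$ is needed so that the box $H(x)$ is large enough and the $O(\log\log\log|x|)$ path fits, but the vanishing of the expectation genuinely comes from $s\searrow 0$, not from $|x|\to\infty$. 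Without this, for fixed $s$ the bound is a fixed positive number and the lemma does not follow.
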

\begin{proof}
Starting in $\z=(x_0,t_0)$ we want to find a red vertex $\x_1 = (x_1,t_1) \in \mathcal{X} \cap H(x)\times(0,1)$ with \smash{$\abs{x_0-x_1}^d\leq t_0^{-\alpha_2}$} and \smash{$t_1\leq t_0^{\alpha_1}$} which is connected to $\z$ via one connector. Since $\abs{x_0}\leq \abs{x}$, we have that $x_0 \in H(x)$. Note that the volume of the intersection of $H(x)$ and the ball \smash{$B_{t_0^{-\alpha_2/d}}(x_0)$} is a positive proportion of the ball volume. Hence, there exists $c>0$ such that the number of red vertices inside the box $H(x)$ with \smash{$\abs{x_0-x_1}^d\leq t_0^{-\alpha_2}$ and $t_1\leq t_0^{\alpha_1}$} is Poisson-distributed with parameter larger than $crt_0^{\alpha_1-\alpha_2}$ and thus the probability that such a vertex does not exist is bounded by 
\[
p_1 = \exp(-crt_0^{\alpha_1-\alpha_2}) + \exp\big(-crt_0^{(\alpha_2-\alpha_1\gamma)\delta-\gamma}\big),
\]
where the second summand is a consequence of Lemma \ref{lem:connector_existence} restricted to $\mathscr{G}^r$. Repeating this strategy, the same arguments yield that for a vertex $\x_{j-1} = (x_{j-1},t_{j-1})$ the probability that 
{there does not exist a connection to} a red vertex $\x_j = (x_j,t_j)$ inside $H(x)$ with 
\smash{$\abs{x_{j-1}-x_j}^d\leq t_{j-1}^{-\alpha_2}$} and $t_j\leq t_{j-1}^{\alpha_1}$ is bounded by
\[
p_j = \exp(-crt_{j-1}^{\alpha_1-\alpha_2}) + \exp\big(-crt_{j-1}^{(\alpha_2-\alpha_1\gamma)\delta-\gamma}\big).
\]
As $\eta = (\gamma - (\alpha_2-\alpha_1\gamma)\delta)\wedge (\alpha_2-\alpha_1)$ and $t_j\leq t_{j-1}^{\alpha_1}$, the right-hand side can be further bounded such that 
\smash{$p_j\leq 2\exp(-ct_0^{_{{-\eta\alpha_1^{j-1}}}}).$} Applying a union bound, the probability of failing to reach $L_K^r$ from $\z$ is bounded by $$2\sum_{j=1}^\infty \exp\Big(-cs^{-\eta\alpha_1^{j-1}}\Big),$$ which converges to $0$ as $s\searrow 0$, as shown in \cite[Lemma A.4]{HirM2020}. As  \smash{$t_j\leq t_0^{\alpha^j}$}, it takes at most $O(\log\log\log\abs{x})$ iterations of this strategy to arrive to a red vertex inside $H(x)$ with mark smaller than $(\log \abs{x})^{-\eta^{-1}}$. This completes the proof.
\end{proof}

\appendix

\section{Further calculations for the ultrasmall regime}

\begin{lemma}\label{cal_lem_1}
Let $x,y\in \mathbb{R}^d$, $t,s\in (0,1]$ and $\ell>0$ with $\ell<t\vee s$. For $\gamma > \frac{\delta}{\delta+1}$, 
\begin{align*}
\int\limits_{t\vee s}^1  \mathd u \int\limits_{\mathbb{R}^d} \mathd z & \rho\big(\kappa^{-1/\delta} t^{\gamma}u^{\gamma/\delta}\abs{x-z}^{d}\big)\rho\big(\kappa^{-1/\delta} s^{\gamma}u^{\gamma/\delta}\abs{y-z}^{d}\big)\\
\leq& \, \, \tilde{c} \ell^{1-\gamma-\gamma/\delta} \kappa(t\wedge s)^{-\gamma \delta}(t\vee s)^{-\gamma}\abs{x-y}^{-d\delta},
\end{align*}
where $\tilde{c}=\frac{2^{d \delta +1}I_\rho}{(\gamma+\gamma/\delta-1)} \vee 1$.\pagebreak[3]
\end{lemma}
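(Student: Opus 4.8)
The plan is to adapt the proof of Lemma~\ref{twoconnlem} almost verbatim: the integrand here differs only in that the exponent of $u$ in the two copies of $\rho$ is $\gamma/\delta$ instead of $1-\gamma$, which after the spatial integration turns the relevant power of $u$ into $u^{-\gamma-\gamma/\delta}$. Since $\gamma>\frac{\delta}{\delta+1}$ is equivalent to $\gamma+\gamma/\delta>1$, this exponent is strictly less than $-1$, and the hypothesis $\ell<t\vee s$ is exactly what lets us control the resulting boundary term. First I would record the elementary geometric fact used already in Lemma~\ref{twoconnlem}: because the open balls $B_{\abs{x-y}/2}(x)$ and $B_{\abs{x-y}/2}(y)$ are disjoint, every $z\in\R^d$ satisfies $\abs{x-z}\geq\tfrac12\abs{x-y}$ or $\abs{y-z}\geq\tfrac12\abs{x-y}$.

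Writing $a=\kappa^{-1/\delta}t^{\gamma}u^{\gamma/\delta}$ and $b=\kappa^{-1/\delta}s^{\gamma}u^{\gamma/\delta}$, I would split the inner integral over these two regions, bound on each region the factor attached to the large ball by its value at radius $\tfrac12\abs{x-y}$ using that $\rho$ is non-increasing, and extend the remaining factor to an integral over all of $\R^d$. A change of variables $w=r^{1/d}(z-x)$ gives $\int_{\R^d}\rho(\kappa^{-1/\delta}r\abs{x-z}^d)\,\mathd z=r^{-1}I_\rho$ for any $r>0$, while $\rho(2^{-d}a\abs{x-y}^d)\le 2^{d\delta}\kappa t^{-\gamma\delta}u^{-\gamma}\abs{x-y}^{-d\delta}$ and symmetrically for $b$. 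Combining these,
\[
\int_{\R^d}\rho(a\abs{x-z}^d)\rho(b\abs{y-z}^d)\,\mathd z\le 2^{d\delta}I_\rho\kappa\,\abs{x-y}^{-d\delta}\,u^{-\gamma-\gamma/\delta}\bigl(t^{-\gamma\delta}s^{-\gamma}+s^{-\gamma\delta}t^{-\gamma}\bigr).
\]

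It then remains to integrate in $u$ over $[t\vee s,1]$ and tidy up. Since $1-\gamma-\gamma/\delta<0$,
\[
\int_{t\vee s}^1 u^{-\gamma-\gamma/\delta}\,\mathd u=\frac{(t\vee s)^{1-\gamma-\gamma/\delta}-1}{\gamma+\gamma/\delta-1}\le\frac{(t\vee s)^{1-\gamma-\gamma/\delta}}{\gamma+\gamma/\delta-1}\le\frac{\ell^{1-\gamma-\gamma/\delta}}{\gamma+\gamma/\delta-1},
\]
the last inequality being where $\ell<t\vee s$ together with the negativity of the exponent are used. Finally $t^{-\gamma\delta}s^{-\gamma}+s^{-\gamma\delta}t^{-\gamma}\le 2(t\wedge s)^{-\gamma\delta}(t\vee s)^{-\gamma}$ because $\delta\ge1$, and collecting the constants yields the bound with $\tilde c=\frac{2^{d\delta+1}I_\rho}{\gamma+\gamma/\delta-1}\vee 1$. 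There is no genuine obstacle here; the only point requiring care — and the only place the hypothesis $\gamma>\frac{\delta}{\delta+1}$ enters — is keeping track of the sign of $1-\gamma-\gamma/\delta$, which dictates both the direction of the two inequalities in the $u$-integral and the legitimacy of replacing $t\vee s$ by the smaller $\ell$.
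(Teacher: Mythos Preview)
Your proof is correct and follows essentially the same approach as the paper's: the disjoint-balls splitting, the spatial integration producing $I_\rho$, the resulting $u^{-\gamma-\gamma/\delta}$ integral, and the final symmetrization are all identical. The only cosmetic difference is that the paper enlarges the $u$-domain to $[\ell,1]$ before integrating, whereas you integrate over $[t\vee s,1]$ and then invoke $\ell<t\vee s$ afterward; both amount to the same inequality.
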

\pagebreak[3]

\begin{proof}
Assume $t<s$, then we have
\begin{align*}
\int\limits_s^1 \mathd u \, &\int\limits_{\mathbb{R}^d} \mathd z \rho\big(\kappa^{-1/\delta} t^{\gamma}u^{\gamma/\delta}\abs{x-z}^{d}\big)\rho\big(\kappa^{-1/\delta} s^{\gamma}u^{\gamma/\delta}\abs{y-z}^{d}\big)\\
\leq& \int\limits_\ell^1 \mathd u \, \rho\big(2^{-d}\kappa^{-1/\delta} t^{\gamma}u^{\gamma/\delta}\abs{x-y}^{d}\big)\int\limits_{\mathbb{R}^d} \mathd z \,\rho\big(\kappa^{-1/\delta} s^{\gamma}u^{\gamma/\delta}\abs{y-z}^{d}\big)\\
&\phantom{do}+ \int\limits_\ell^1 \mathd u \, \rho\big(2^{-d}\kappa^{-1/\delta} s^{\gamma}u^{\gamma/\delta}\abs{x-y}^{d}\big)\int\limits_{\mathbb{R}^d} \mathd z \,\rho\big(\kappa^{-1/\delta} t^{\gamma}u^{\gamma/\delta}\abs{x-z}^{d}\big)\\
\leq& I_\rho 2^{d \delta}\kappa \Big[t^{-\gamma\delta}s^{-\gamma}\abs{x-y}^{-d \delta}\int\limits_\ell^1 \mathd u \, u^{-\gamma-\gamma/\delta} + \int\limits_\ell^1 \mathd u \, t^{-\gamma}s^{-\gamma\delta} \abs{x-y}^{-d \delta} u^{-\gamma-\gamma/\delta}\Big]
\\
\leq& \tfrac{I_\rho 2^{d \delta}}{\gamma+\gamma/\delta-1}\kappa \ell^{1-\gamma-\gamma/\delta} \Big[t^{-\gamma\delta}s^{-\gamma}\abs{x-y}^{-d \delta} + s^{-\gamma\delta}t^{-\gamma}\abs{x-y}^{-d \delta}\Big]\\
\leq& \tfrac{I_\rho 2^{d \delta+1}}{\gamma+\gamma/\delta-1}\kappa \ell^{1-\gamma-\gamma/\delta} t^{-\gamma\delta}s^{-\gamma}\abs{x-y}^{-d \delta},
\end{align*}
where we used for the first inequality that, for $z\in \mathbb{R}^d$, either $\abs{x-z}$ or $\abs{y-z}$ is larger than $\frac{\abs{x-y}}{2}$ and for the third inequality that $\gamma > \frac{\delta}{\delta+1}$ implies $\gamma+\gamma/\delta-1 > 0$.
\end{proof}

\begin{lemma}\label{cal_lem_2}
Let $x,y\in \mathbb{R}^d$, $t,s\in (0,1]$ and $\frac{1}{e}>\ell>0$ with $\ell<t\vee s$. For $\gamma>\frac{\delta}{\delta+1}$, 
\begin{align*}
\int\limits_{t\vee s}^1 \mathd u \int\limits_{\mathbb{R}^d} \mathd z \,  & \rho(\kappa^{-1/\delta} t^{\gamma}u^{\gamma/\delta}\abs{x-z}^{d})\rho(\kappa^{-1/\delta}s^{\gamma}u^{1-\gamma}\abs{y-z}^d)\\
\leq& \, \tilde{c} \log(\ell^{-1}) \kappa(t\wedge s)^{-\gamma \delta}(t\vee s)^{-\gamma}\abs{x-y}^{-d\delta},
\end{align*}
where $\tilde{c}=\frac{I_\rho 2^{d \delta + 1}}{(\delta - 1)(\gamma+\gamma/\delta-1)\wedge 1}$.
\end{lemma}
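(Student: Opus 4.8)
The plan is to mimic the proof of Lemma~\ref{cal_lem_1} almost verbatim, the only difference being that the second factor now carries the exponent $u^{1-\gamma}$ instead of $u^{\gamma/\delta}$, so that the $u$-integral produces a logarithm rather than a power of $\ell$. First I would split the spatial integral over $z\in\mathbb{R}^d$ according to whether $\abs{x-z}\geq\tfrac{\abs{x-y}}{2}$ or $\abs{y-z}\geq\tfrac{\abs{x-y}}{2}$, which is exhaustive because the balls $B_{\abs{x-y}/2}(x)$ and $B_{\abs{x-y}/2}(y)$ are disjoint; in the first case bound $\rho(\kappa^{-1/\delta}t^\gamma u^{\gamma/\delta}\abs{x-z}^d)$ by $\rho(2^{-d}\kappa^{-1/\delta}t^\gamma u^{\gamma/\delta}\abs{x-y}^d)$ and integrate the remaining factor $\rho(\kappa^{-1/\delta}s^\gamma u^{1-\gamma}\abs{y-z}^d)$ over all of $\mathbb{R}^d$, which by the substitution $z\mapsto z-y$ gives $I_\rho\,(s^\gamma u^{1-\gamma})^{-1}$; symmetrically in the second case.

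This reduces the bound to a sum of two one-dimensional integrals of the schematic form $\int_{t\vee s}^1 \rho(2^{-d}\kappa^{-1/\delta}t^\gamma u^{\gamma/\delta}\abs{x-y}^d)\,(s^\gamma u^{1-\gamma})^{-1}\,\mathd u$ and its $t\leftrightarrow s$ counterpart. Using $\rho(r)\le r^{-\delta}$ on the truncated factor turns the integrand into a constant multiple of $u^{-\gamma/\delta\cdot\delta}u^{\gamma-1}=u^{\gamma-1-\gamma}=u^{-1}$ — precisely the exponent that yields $\int_\ell^1 u^{-1}\,\mathd u=\log(1/\ell)$ after extending the lower limit from $t\vee s$ down to $\ell$ (legitimate since the integrand is nonnegative and $\ell<t\vee s$). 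Here one must be a little careful: the factor $u^{-\delta\gamma/\delta}=u^{-\gamma}$ coming from $\rho(r)\le r^{-\delta}$ combined with $u^{-(1-\gamma)}$ from the spatial integral gives exactly $u^{-1}$, so no power of $\ell$ survives and only the logarithm remains; the prefactor $\abs{x-y}^{-d\delta}$ and the mark prefactors $t^{-\gamma\delta}$, $s^{-\gamma}$ (resp.\ $s^{-\gamma\delta}$, $t^{-\gamma}$) are produced exactly as in Lemma~\ref{cal_lem_1}. Finally I would take the worse of the two symmetric terms, write it in the $(t\wedge s)$/$(t\vee s)$ form, and absorb all numerical constants into $\tilde c$; the factor $\frac{1}{(\delta-1)\wedge 1}$ in the claimed constant appears to allow for the alternative estimate when $\rho$ is not yet in its polynomial-decay regime (i.e.\ bounding $\rho\le 1$ on part of the range and using $\delta>1$ to control the tail), but on the relevant range $u\ge t\vee s>\ell$ and $\abs{x-y}$ large this is routine.

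The only genuine point requiring care — hence the "main obstacle", though it is mild — is keeping track of which exponent of $u$ ends up in the integrand so that the $u$-integral really is $\int u^{-1}$ and not $\int u^{-\beta}$ with $\beta\neq 1$; this is where the mixed exponents $\gamma/\delta$ and $1-\gamma$ interact, and where the statement of this lemma differs from Lemma~\ref{cal_lem_1}. Everything else is a direct transcription of the earlier argument, so I would present it as a short proof explicitly paralleling the proof of Lemma~\ref{cal_lem_1} and only highlighting the computation $-\delta\cdot(\gamma/\delta)-(1-\gamma)=-1$ that produces the logarithm.
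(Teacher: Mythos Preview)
Your approach is the same as the paper's, and the first half is correct, but you have misidentified the second term after the spatial split. The two cases are \emph{not} related by a $t\leftrightarrow s$ swap: the $u$-exponents $\gamma/\delta$ and $1-\gamma$ are attached to the specific factors, not to the marks. When $\abs{y-z}\geq\tfrac{\abs{x-y}}{2}$ you bound the $s$-factor (which carries $u^{1-\gamma}$) by $\rho(2^{-d}\kappa^{-1/\delta}s^\gamma u^{1-\gamma}\abs{x-y}^d)$ and integrate the $t$-factor (which carries $u^{\gamma/\delta}$) over $z$, producing $I_\rho\, t^{-\gamma}u^{-\gamma/\delta}$. Applying $\rho(r)\leq r^{-\delta}$ then gives the $u$-exponent $(\gamma-1)\delta-\gamma/\delta$, not $-1$. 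Since $\gamma>\tfrac{\delta}{\delta+1}$ and $\delta>1$, one checks $(\gamma-1)\delta-\gamma/\delta>-1$ (equivalently $1+(\gamma-1)\delta-\gamma/\delta=(\delta-1)(\gamma+\gamma/\delta-1)>0$), so $\int_\ell^1 u^{(\gamma-1)\delta-\gamma/\delta}\,\mathd u\leq \frac{1}{(\delta-1)(\gamma+\gamma/\delta-1)}$ is a constant with no $\ell$-dependence at all.

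This is exactly where the constant $\tilde c=\frac{I_\rho 2^{d\delta+1}}{(\delta-1)(\gamma+\gamma/\delta-1)\wedge 1}$ comes from --- not from any alternative estimate using $\rho\leq 1$, as you guessed. The first term contributes $\log(\ell^{-1})\,t^{-\gamma\delta}s^{-\gamma}$ and the second contributes $\frac{1}{(\delta-1)(\gamma+\gamma/\delta-1)}\,s^{-\gamma\delta}t^{-\gamma}$; both are then bounded by $\log(\ell^{-1})(t\wedge s)^{-\gamma\delta}(t\vee s)^{-\gamma}$ using $\log(\ell^{-1})>1$. Your highlighted computation $-\delta\cdot(\gamma/\delta)-(1-\gamma)=-1$ is correct for the first term and is indeed the source of the logarithm, but you need to carry out the second term separately rather than appealing to a nonexistent symmetry.
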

\begin{proof}
Since, for $z\in \mathbb{R}^d$, either $\abs{x-z}$ or $\abs{y-z}$ is larger than \smash{$\frac{\abs{x-y}}{2}$}, we have
\begin{align*}
\int\limits_{t\vee s}^1 &  \mathd u \int\limits_{\mathbb{R}^d} \mathd z \,  \rho(\kappa^{-1/\delta} t^{\gamma}u^{\gamma/\delta}\abs{x-z}^{d})\rho(\kappa^{-1/\delta}s^{\gamma}u^{1-\gamma}\abs{y-z}^d)\\
\leq& \int\limits_{\ell}^1 \mathd u \,  \rho(2^{-d} \kappa^{-1/\delta} t^{\gamma}u^{\gamma/\delta}\abs{x-y}^{d}) \int\limits_{\mathbb{R}^d} \mathd z \, \rho(\kappa^{-1/\delta}s^{\gamma}u^{1-\gamma}\abs{y-z}^d)\\
&+ \int\limits_{\ell}^1 \mathd u  \, \rho(2^{-d}\kappa^{-1/\delta}s^{\gamma}u^{1-\gamma}\abs{x-y}^d)\int\limits_{\mathbb{R}^d} \mathd z \, \rho(\kappa^{-1/\delta} t^{\gamma}u^{\gamma/\delta}\abs{x-z}^{d})\\
\leq& \, I_\rho 2^{d \delta}\kappa \bigg[t^{-\gamma \delta} s^{-\gamma}\abs{x-y}^{-d \delta} \int\limits_{\ell}^1 \mathd u \,  u^{-1} +  s^{-\gamma\delta} t^{-\gamma}\abs{x-y}^{-d \delta} \int\limits_{\ell}^1 \mathd u  \, u^{-\gamma/\delta + (\gamma-1)\delta}\bigg].
\end{align*}
As \smash{$\gamma > \frac{\delta}{\delta + 1}$} and $\delta>1$, we have $-\gamma/\delta + (\gamma-1)\delta>-1$. Hence, the last expression can be further bounded by 
\begin{align*}
I_\rho 2^{d \delta} & \kappa \big[\log(\ell^{-1})t^{-\gamma \delta}s^{-\gamma} \abs{x-y}^{-d \delta} + \tfrac{1}{(\delta-1)(\gamma+\gamma/\delta-1)} s^{-\gamma\delta} t^{-\gamma}\abs{x-y}^{-d \delta}\big]\\
\leq& \tfrac{I_\rho 2^{d \delta+1}}{(\delta-1)(\gamma+\gamma/\delta-1)\wedge 1} \log(\ell^{-1}) \kappa (t\wedge s)^{-\gamma\delta} (t\vee s)^{-\gamma} \abs{x-y}^{-d \delta},
\end{align*}
since $\log(\ell^{-1})>1$.
\end{proof}
\bigskip

{\bf Acknowledgment:} This research was supported by Deutsche Forschungsgemeinschaft (DFG) as Project Number~425842117. No data or code was created as part of this research. We would also like to thank
the anonymous referees for valuable comments which led to significant improvements in the paper.

\end{document}